
\documentclass[reqno]{amsart}
\usepackage{amsfonts}
\usepackage[left=3.30cm,right=3.30cm]{geometry}
\usepackage{amsmath}

\setcounter{MaxMatrixCols}{10}

\newtheorem{theorem}{Theorem}
\theoremstyle{plain}
\newtheorem{acknowledgement}{Acknowledgement}

\newtheorem{corollary}{Corollary}

\newtheorem{definition}{Definition}
\newtheorem{example}{Example}

\newtheorem{lemma}{Lemma}

\newtheorem{proposition}{Proposition}
\newtheorem{remark}{Remark}

\DeclareMathOperator{\Div}{div}
 \numberwithin{equation}{section}

\begin{document}
\title[Homogenization of nonlinear SPDEs]{Homogenization of nonlinear
stochastic partial differential equations in a general ergodic environment}
\author{Paul Andr\'{e} Razafimandimby}
\address{Department of Mathematics and Applied Mathematics, University of
Pretoria, Pretoria 0002, South Africa (P.A. Razafimandimby)}
\email{paulrazafi@gmail.com}

\author{Jean Louis Woukeng}
\address{Department of Mathematics and Computer Science, University of
Dschang, P.O. Box 67, Dschang, Cameroon (J.L. Woukeng)}
\email{jwoukeng@yahoo.fr}
\date{}
\subjclass[2000]{ 35B40, 46J10, 60H15}
\keywords{Stochastic Homogenization, algebras with mean value, Stochastic
partial differential equations, Wiener Process}

\begin{abstract}
In this paper, we show that the concept of sigma-convergence associated to
stochastic processes can tackle the homogenization of stochastic partial
differential equations. In this regard, the homogenization of a stochastic
nonlinear partial differential equation is addressed. Using some deep
compactness results such as the Prokhorov and Skorokhod theorems, we prove
that the sequence of solutions of this problem converges in probability
towards the solution of an equation of the same type. To proceed with, we
use the concept of\textit{\ sigma-convergence for stochastic processes},
which takes into account both the deterministic and random behaviours of the
solutions of the problem.
\end{abstract}

\maketitle

\section{Introduction}

Algebras with mean value have been highly efficient in deterministic
homogenization theory. It is now a well known fact that given a partial
differential equation (PDE) with oscillating coefficients, one can always,
under some structural constraints on its coefficients, solve some
homogenization problems related to this PDE.

Contrasted with deterministic homogenization, very few results are available
as regards the homogenization of stochastic PDEs (SPDEs). We may cite \cite%
{Bensoussan1, Ichihara1, Ichihara2, Sango, Wang1, Wang2} in that context. In
the just mentioned previous work, the homogenization of SPDEs is studied
under the periodicity assumption on the coefficients of the equations
considered. In addition, the convergence method used is either the
G-convergence method \cite{Bensoussan1, Ichihara1, Ichihara2} or the
two-scale convergence method \cite{Wang1, Wang2}. Given the nature both
random and deterministic of the solutions of these equations, it is more
convenient to use an appropriate method taking into account both these two
types of behaviour. As regards the SPDEs in a general ergodic environment,
no results is available so far. The first attempt to generalize this to
SPDEs beyond the periodic context is undertaken in \cite{WoukengArxiv} in
which the authors consider the homogenization problem for a SPDE in an
almost periodic setting. The present work is therefore the first one in
which such a problem is considered.

To be more precise, we are concerned with the homogenization problem for the
following nonlinear stochastic partial differential equation
\begin{equation}
\left\{
\begin{array}{l}
du_{\varepsilon }=\left( \Div a\left( x,t,\frac{x}{\varepsilon },\frac{t}{%
\varepsilon },u_{\varepsilon },Du_{\varepsilon }\right) -a_{0}\left( x,t,%
\frac{x}{\varepsilon },\frac{t}{\varepsilon },u_{\varepsilon }\right)
\right) dt+M\left( \frac{x}{\varepsilon },\frac{t}{\varepsilon }%
,u_{\varepsilon }\right) dW\text{\ \ in }Q_{T} \\
u_{\varepsilon }=0\text{\ \ on }\partial Q\times (0,T) \\
u_{\varepsilon }(x,0)=u^{0}(x)\text{\ \ in }Q,%
\end{array}%
\right.  \label{1.1}
\end{equation}%
where $Q_{T}=Q\times (0,T)$, $Q$ being a Lipschitz domain in $\mathbb{R}^{N}$
with smooth boundary $\partial Q$, $T$ is a positive real number and $W$ is
a cylindrical standard Wiener process defined on a given probability space $%
(\Omega ,\mathcal{F},\mathbb{P})$. Under a suitable assumption on the
coefficients of (\ref{1.1}) we prove that the sequence of solutions to (\ref%
{1.1}) converges to the solution of an equation of the same type as (\ref%
{1.1}). In view of the result obtained, one might be tempted to believe that
the homogenization process of an SPDE is summarized in the homogenization of
its deterministic part, added to the average of its stochastic part. This is
not true in general. Indeed, one can obtain, passing to the limit, a
homogenized equation of a type completely different from that of the initial
problem; see e.g., \cite{Wang2}.

The paper is presented as follows. In Section 2, we give some fundamentals
of generalized Besicovitch spaces. Section 3 deals with the concept of
sigma-convergence for stochastic processes. We state therein some
compactness results that will be used in the sequel. In Section 4, we state
the problem and prove some fundamental estimates. In Section 5 we collect
some useful results necessary to the homogenization part, and we use them in
Section 6 to study the homogenization of (\ref{1.1}). We prove there the
global homogenization result and we derive the homogenized problem. Section
7 deals with a corrector-type result. Finally in Section 8, we apply the
result of Section 6 to some concrete physical situations.

Unless otherwise specified, vector spaces throughout are assumed to be real
vector spaces, and scalar functions are assumed to take real values. We
shall always assume that the numerical space $\mathbb{R}^{m}$ (integer $%
m\geq 1$) and its open sets are each equipped with the Lebesgue measure $%
dx=dx_{1}...dx_{m}$.

\section{Some properties of the generalized Besicovitch spaces}

We begin this section by recalling some important properties of algebras
with mean value \cite{Jikov, Casado, NA, Zhikov4}. By an algebra with mean
value (algebra wmv, in short) on $\mathbb{R}^{N}$ we mean any closed
subalgebra $A$ of the $\mathcal{C}$*-algebra of bounded uniformly continuous
functions $BUC(\mathbb{R}^{N})$ which contains the constants, is translation
invariant ($u(\cdot +a)\in A$ for any $u\in A$ and each $a\in \mathbb{R}^{N}$%
) and is such that each element possesses a mean value in the following
sense:

\begin{itemize}
\item[(\textit{MV})] For each $u\in A$, the sequence $(u^{\varepsilon
})_{\varepsilon >0}$ (where $u^{\varepsilon }(x)=u(x/\varepsilon )$, $x\in
\mathbb{R}^{N}$) weakly $\ast $-converges in $L^{\infty }(\mathbb{R}^{N})$
to some constant real-valued function $M(u)$ as $\varepsilon \rightarrow 0$.
\end{itemize}

It is known that $A$ (endowed with the sup norm topology) is a commutative $%
\mathcal{C}$*-algebra with identity. We denote by $\Delta (A)$ the spectrum
of $A$ and by $\mathcal{G}$ the Gelfand transformation on $A$. We recall
that $\Delta (A)$ (a subset of the topological dual $A^{\prime }$ of $A$) is
the set of all nonzero multiplicative linear functionals on $A$, and $%
\mathcal{G}$ is the mapping of $A$ into $\mathcal{C}(\Delta (A))$ such that $%
\mathcal{G}(u)(s)=\left\langle s,u\right\rangle $ ($s\in \Delta (A)$), where
$\left\langle ,\right\rangle $ denotes the duality pairing between $%
A^{\prime }$ and $A$. We endow $\Delta (A)$ with the relative weak$\ast $
topology on $A^{\prime }$. Then using the well-known theorem of Stone (see
e.g., \cite[Theorem IV.6.18, p. 274]{DS}) one can easily show that the
spectrum $\Delta (A)$ is a compact topological space, and the Gelfand
transformation $\mathcal{G}$ is an isometric isomorphism identifying $A$
with $\mathcal{C}(\Delta (A))$ (the continuous functions on $\Delta (A)$) as
$\mathcal{C}$*-algebras. Next, since each element of $A$ possesses a mean
value, this yields an application $u\mapsto M(u)$ (denoted by $M$ and called
the mean value) which is a nonnegative continuous linear functional on $A$
with $M(1)=1$, and so provides us with a linear nonnegative functional $\psi
\mapsto M_{1}(\psi )=M(\mathcal{G}^{-1}(\psi ))$ defined on $\mathcal{C}%
(\Delta (A))=\mathcal{G}(A)$, which is clearly bounded. Therefore, by the
Riesz-Markov theorem, $M_{1}(\psi )$ is representable by integration with
respect to some Radon measure $\beta $ (of total mass $1$) in $\Delta (A)$,
called the $M$\textit{-measure} for $A$ \cite{Hom1}. It is a fact that we
have
\begin{equation*}
M(u)=\int_{\Delta (A)}\mathcal{G}(u)d\beta \text{\ for }u\in A\text{.}
\end{equation*}

Next, to any algebra with mean value $A$ are associated the following
subspaces: $A^{m}=\{\psi \in \mathcal{C}^{m}(\mathbb{R}^{N}):$ $%
D_{y}^{\alpha }\psi \in A$ for every $\alpha =(\alpha _{1},...,\alpha
_{N})\in \mathbb{N}^{N}$ with $\left\vert \alpha \right\vert \leq m\}$
(where $D_{y}^{\alpha }\psi =\partial ^{\left\vert \alpha \right\vert }\psi
/\partial y_{1}^{\alpha _{1}}\cdot \cdot \cdot \partial y_{N}^{\alpha _{N}}$
and integer $m\geq 1$). Endowed with the norm $\left\Vert \left\vert
u\right\vert \right\Vert _{m}=\sup_{\left\vert \alpha \right\vert \leq
m}\left\Vert D_{y}^{\alpha }\psi \right\Vert _{\infty }$, $A^{m}$ is a
Banach space. We also define the space $A^{\infty }$ as the space of $\psi
\in \mathcal{C}^{\infty }(\mathbb{R}_{y}^{N})$ such that $D_{y}^{\alpha
}\psi =\frac{\partial ^{\left\vert \alpha \right\vert }\psi }{\partial
y_{1}^{\alpha _{1}}\cdot \cdot \cdot \partial y_{N}^{\alpha _{N}}}\in A$ for
every $\alpha =(\alpha _{1},...,\alpha _{N})\in \mathbb{N}^{N}$. Endowed
with a suitable locally convex topology defined by the family of norms $%
\left\Vert \left\vert \cdot \right\vert \right\Vert _{m}$, $A^{\infty }$ is
a Fr\'{e}chet space.

Now, the partial derivative of index $i$ ($1\leq i\leq N$) on $\Delta (A)$
is defined to be the mapping $\partial _{i}=\mathcal{G}\circ \partial
/\partial y_{i}\circ \mathcal{G}^{-1}$ (usual composition) of $\mathcal{D}%
^{1}(\Delta (A))=\{\varphi \in \mathcal{C}(\Delta (A)):\mathcal{G}%
^{-1}(\varphi )\in A^{1}\}$ into $\mathcal{C}(\Delta (A))$. Higher order
derivatives are defined analogously. At the present time, let $\mathcal{D}%
(\Delta (A))=\{\varphi \in \mathcal{C}(\Delta (A)):$ $\mathcal{G}%
^{-1}(\varphi )\in A^{\infty }\}.$ Endowed with a suitable locally convex
topology $\mathcal{D}(\Delta (A))$) is a Fr\'{e}chet space and further, $%
\mathcal{G}$ viewed as defined on $A^{\infty }$ is a topological isomorphism
of $A^{\infty }$ onto $\mathcal{D}(\Delta (A))$.

Analogously to the space $\mathcal{D}^{\prime }(\mathbb{R}^{N})$, we now
define the space of distributions on $\Delta (A)$ to be the space of all
continuous linear form on $\mathcal{D}(\Delta (A))$. We denote it by $%
\mathcal{D}^{\prime }(\Delta (A))$ and we endow it with the strong dual
topology. Since $A^{\infty }$ is dense in $A$ (see \cite[Proposition 2.3]%
{ACAP}), it is easy to see that the space $L^{p}(\Delta (A))$ ($1\leq p\leq
\infty $) is a subspace of $\mathcal{D}^{\prime }(\Delta (A))$ (with
continuous embedding), so that one may define the Sobolev spaces on $\Delta
(A)$ as follows.
\begin{equation*}
W^{1,p}(\Delta (A))=\{u\in L^{p}(\Delta (A)):\text{ }\partial _{i}u\in
L^{p}(\Delta (A))\text{ (}1\leq i\leq d\text{)}\}\;(1\leq p<\infty )
\end{equation*}%
where the derivative $\partial _{i}u$ is taken in the distribution sense on $%
\Delta (A)$. We equip $W^{1,p}(\Delta (A))$ with the norm

\begin{equation*}
\begin{array}{l}
||u||_{W^{1,p}(\Delta (A))}=\left[ ||u||_{L^{p}(\Delta
(A))}^{p}+\sum_{i=1}^{N}||\partial _{i}u||_{L^{p}(\Delta (A))}^{p}\right] ^{%
\frac{1}{p}}\text{ \thinspace }\left( u\in W^{1,p}(\Delta (A))\right) , \\
1\leq p<\infty ,%
\end{array}%
\end{equation*}%
which makes it a Banach space. To the above space is attached the space
\begin{equation*}
W^{1,p}(\Delta (A))/\mathbb{R}=\{u\in W^{1,p}(\Delta (A)):\int_{\Delta
(A)}ud\beta =0\}
\end{equation*}%
equipped with the seminorm $u\mapsto (\sum_{i=1}^{N}||\partial
_{i}u||_{L^{p}(\Delta (A))}^{p})^{1/p}$, and its separated completion $%
W_{\#}^{1,p}(\Delta (A))$. We will see in the sequel that $%
W_{\#}^{1,p}(\Delta (A))$ is in fact the completion of $W^{1,p}(\Delta (A))/%
\mathbb{R}$ since $A$ will be taken to be an \textit{ergodic} algebra; see
the last part of this section.

The concept of a product algebra wmv will be useful in our study. Let $A_{y}$
(resp. $A_{\tau }$) be an algebra wmv on $\mathbb{R}_{y}^{N}$ (resp. $%
\mathbb{R}_{\tau }$). We define the product algebra wmv $A_{y}\odot A_{\tau
} $ as the closure in $BUC(\mathbb{R}^{N+1})$ of the tensor product $%
A_{y}\otimes A_{\tau }=\{\sum_{\text{finite}}u_{i}\otimes v_{i}:u_{i}\in
A_{y}$ and $v_{i}\in A_{\tau }\}$. This defines an algebra wmv on $\mathbb{R}%
^{N+1}$. A characterization of these products is given in the following
result whose proof can be found in \cite{CMP}.

\begin{theorem}
\label{t2.1}Let $A_{y}$, $A_{\tau }$ and $A$ be as above. For $f\in BUC(%
\mathbb{R}_{y,\tau }^{N+1})$, we define $f_{y}\in BUC(\mathbb{R}_{\tau })$
and $f^{\tau }\in BUC(\mathbb{R}_{y}^{N})$ by
\begin{equation*}
f_{y}(\tau )=f^{\tau }(y)=f(y,\tau )\text{\ for }(y,\tau )\in \mathbb{R}%
_{y}^{N}\times \mathbb{R}_{\tau }
\end{equation*}%
and put
\begin{equation*}
B_{f}=\{f^{\tau }:\tau \in \mathbb{R}\},\;C_{f}=\{f_{y}:y\in \mathbb{R}%
^{N}\}.\;\;\;\;\;\;\;
\end{equation*}%
Then $B_{f}\subset A_{y}$ and $C_{f}\subset A_{\tau }$ for every $f\in A$.
Also for $f\in A$ both $B_{f}$ and $C_{f}$ are relatively compact in $A_{y}$
and in $A_{\tau }$ respectively (in the sup norm topology).
\end{theorem}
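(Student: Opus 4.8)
The plan is to treat the two assertions in turn, dealing first with the inclusions $B_{f}\subset A_{y}$ and $C_{f}\subset A_{\tau }$, and then with the relative compactness; in both cases the strategy is to reduce from a general $f\in A$ to the dense subclass $A_{y}\otimes A_{\tau }$ of finite sums $\sum_{i}u_{i}\otimes v_{i}$.

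For the inclusions, I would first verify the claim on the tensor product. If $f=\sum_{i=1}^{n}u_{i}\otimes v_{i}$ with $u_{i}\in A_{y}$ and $v_{i}\in A_{\tau }$, then for each fixed $\tau $ one has $f^{\tau }=\sum_{i=1}^{n}v_{i}(\tau )\,u_{i}$, a finite linear combination of elements of $A_{y}$, hence $f^{\tau }\in A_{y}$; symmetrically $f_{y}=\sum_{i=1}^{n}u_{i}(y)\,v_{i}\in A_{\tau }$. For general $f\in A=A_{y}\odot A_{\tau }$, I would choose $f_{n}\in A_{y}\otimes A_{\tau }$ with $\Vert f_{n}-f\Vert _{\infty }\to 0$. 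Since $\Vert f_{n}^{\tau }-f^{\tau }\Vert _{\infty }\le \Vert f_{n}-f\Vert _{\infty }$ for every $\tau $, the functions $f_{n}^{\tau }\in A_{y}$ converge to $f^{\tau }$ in the sup norm, and closedness of $A_{y}$ in $BUC(\mathbb{R}_{y}^{N})$ forces $f^{\tau }\in A_{y}$; the inclusion $f_{y}\in A_{\tau }$ follows in the same way.

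The substance of the theorem is the relative compactness, and here the key idea is a reduction to finite dimension. Fix $\varepsilon >0$ and choose $g=\sum_{i=1}^{n}u_{i}\otimes v_{i}\in A_{y}\otimes A_{\tau }$ with $\Vert f-g\Vert _{\infty }<\varepsilon /3$; then $\Vert f^{\tau }-g^{\tau }\Vert _{\infty }\le \Vert f-g\Vert _{\infty }<\varepsilon /3$ for every $\tau $. The set $B_{g}=\{g^{\tau }:\tau \in \mathbb{R}\}$ lies in the finite-dimensional subspace $\mathrm{span}\{u_{1},\dots ,u_{n}\}$ of $A_{y}$, and since each $v_{i}$ is bounded the coefficient vectors $(v_{1}(\tau ),\dots ,v_{n}(\tau ))$ range over a bounded subset of $\mathbb{R}^{n}$; hence $B_{g}$ is a bounded subset of a finite-dimensional space and is therefore totally bounded. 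Covering $B_{g}$ by finitely many balls of radius $\varepsilon /3$ centred at points $g^{\tau _{1}},\dots ,g^{\tau _{m}}$ and invoking the triangle inequality $\Vert f^{\tau }-f^{\tau _{j}}\Vert _{\infty }\le \Vert f^{\tau }-g^{\tau }\Vert _{\infty }+\Vert g^{\tau }-g^{\tau _{j}}\Vert _{\infty }+\Vert g^{\tau _{j}}-f^{\tau _{j}}\Vert _{\infty }<\varepsilon $, I would conclude that $\{f^{\tau _{1}},\dots ,f^{\tau _{m}}\}$ is a finite $\varepsilon $-net for $B_{f}$. As $\varepsilon $ is arbitrary, $B_{f}$ is totally bounded; and since $A_{y}$ is complete (being closed in $BUC(\mathbb{R}_{y}^{N})$), $B_{f}$ is relatively compact. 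The argument for $C_{f}$ in $A_{\tau }$ is entirely symmetric.

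The only delicate point I anticipate is the compactness step, specifically the observation that approximating $f$ in the sup norm by a tensor collapses the otherwise non-compact parameter range $\tau \in \mathbb{R}$ (respectively $y\in \mathbb{R}^{N}$) onto a bounded subset of a finite-dimensional space; once this reduction is in place, the total-boundedness estimate is routine. It is worth emphasizing that neither the mean-value property nor the spectrum $\Delta (A)$ enters the proof—only that $A$ is by definition the sup-norm closure of $A_{y}\otimes A_{\tau }$ and that $A_{y}$, $A_{\tau }$ are closed subspaces of the respective $BUC$ spaces.
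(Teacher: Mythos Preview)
Your argument is correct and complete: the density reduction to $A_{y}\otimes A_{\tau }$ handles both the inclusions (via closedness of $A_{y}$ and $A_{\tau }$) and the relative compactness (via the observation that the slices of a finite tensor lie in a bounded subset of a finite-dimensional space, followed by a standard $\varepsilon/3$ transfer). Note, however, that the paper does not actually prove this theorem in the text; it simply states the result and refers the reader to \cite{CMP} for the proof, so there is no in-paper argument to compare against. Your self-contained proof is exactly the kind of elementary argument one would expect for this statement, and your closing remark that only the definition of $A$ as the sup-norm closure of $A_{y}\otimes A_{\tau }$ is used (not the mean value or the spectrum) is accurate.
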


Let $AP(\mathbb{R}^{N})$ denote the space of all Bohr almost periodic
functions on $\mathbb{R}^{N}$ \cite{Besicovitch, Bohr}, that is the algebra
of functions in $\mathcal{B}(\mathbb{R}^{N})$ that are uniformly
approximated by finite linear combinations of functions in the set $%
\{y\mapsto \cos (k\cdot y),y\mapsto \sin (k\cdot y):k\in \mathbb{R}^{N}\}$.
It is well-known that $AP(\mathbb{R}^{N})$ is an algebra wmv on $\mathbb{R}%
^{N}$. As an example we have $AP(\mathbb{R}_{y}^{N})\odot AP(\mathbb{R}%
_{\tau })=AP(\mathbb{R}_{y}^{N}\times \mathbb{R}_{\tau })$. We also have
that $\mathcal{C}_{\text{per}}(Y)\odot \mathcal{C}_{\text{per}}(Z)=\mathcal{C%
}_{\text{per}}(Y\times Z)$ where $Y=\left( 0,1\right) ^{N}$ and $Z=\left(
0,1\right) $. This follows from the identification $\mathcal{C}_{\text{per}%
}(Y)=\mathcal{C}(\mathbb{T}^{N})$ where $\mathbb{T}^{N}$ is the $N$-torus in
$\mathbb{R}^{N}$. Similarly we have $\mathcal{C}_{\text{per}}(Z)\odot AP(%
\mathbb{R}_{y}^{N})=\mathcal{C}_{\text{per}}(Z;AP(\mathbb{R}_{y}^{N}))$.
Other examples of product algebras wmv can be given.\bigskip

Next, let $B_{A}^{p}$ ($1\leq p<\infty $) denote the Besicovitch space
associated to $A$, that is the closure of $A$ with respect to the
Besicovitch seminorm
\begin{equation*}
\left\Vert u\right\Vert _{p}=\left( \underset{r\rightarrow +\infty }{\lim
\sup }\frac{1}{\left\vert B_{r}\right\vert }\int_{B_{r}}\left\vert
u(y)\right\vert ^{p}dy\right) ^{1/p}
\end{equation*}%
where $B_{r}$ is the open ball of $\mathbb{R}^{N}$ of radius $r$. It is
known that $B_{A}^{p}$ is a complete seminormed vector space verifying $%
B_{A}^{q}\subset B_{A}^{p}$ for $1\leq p\leq q<\infty $. From this last
property one may naturally define the space $B_{A}^{\infty }$ as follows:
\begin{equation*}
B_{A}^{\infty }=\{f\in \cap _{1\leq p<\infty }B_{A}^{p}:\sup_{1\leq p<\infty
}\left\Vert f\right\Vert _{p}<\infty \}\text{.}\;\;\;\;\;\;\;\;\;
\end{equation*}%
We endow $B_{A}^{\infty }$ with the seminorm $\left[ f\right] _{\infty
}=\sup_{1\leq p<\infty }\left\Vert f\right\Vert _{p}$, which makes it a
complete seminormed space. We recall that the spaces $B_{A}^{p}$ ($1\leq
p\leq \infty $) are not in general Fr\'{e}chet spaces since they are not
separated in general. The following properties are worth noticing \cite{CMP,
NA}:

\begin{itemize}
\item[(\textbf{1)}] The Gelfand transformation $\mathcal{G}:A\rightarrow
\mathcal{C}(\Delta (A))$ extends by continuity to a unique continuous linear
mapping, still denoted by $\mathcal{G}$, of $B_{A}^{p}$ into $L^{p}(\Delta
(A))$, which in turn induces an isometric isomorphism $\mathcal{G}_{1}$, of $%
B_{A}^{p}/\mathcal{N}=\mathcal{B}_{A}^{p}$ onto $L^{p}(\Delta (A))$ (where $%
\mathcal{N}=\{u\in B_{A}^{p}:\mathcal{G}(u)=0\}$). Furthermore if $u\in
B_{A}^{p}\cap L^{\infty }(\mathbb{R}^{N})$ then $\mathcal{G}(u)\in L^{\infty
}(\Delta (A))$ and $\left\Vert \mathcal{G}(u)\right\Vert _{L^{\infty
}(\Delta (A))}\leq \left\Vert u\right\Vert _{L^{\infty }(\mathbb{R}^{N})}$.

\item[(\textbf{2)}] The mean value $M$ viewed as defined on $A$, extends by
continuity to a positive continuous linear form (still denoted by $M$) on $%
B_{A}^{p}$ satisfying $M(u)=\int_{\Delta (A)}\mathcal{G}(u)d\beta $ ($u\in
B_{A}^{p}$). Furthermore, $M(\tau _{a}u)=M(u)$ for each $u\in B_{A}^{p}$ and
all $a\in \mathbb{R}^{N}$, where $\tau _{a}u(z)=u(z+a)$ for almost all $z\in
\mathbb{R}^{N}$. Moreover for $u\in B_{A}^{p}$ we have $\left\Vert
u\right\Vert _{p}=\left[ M(\left\vert u\right\vert ^{p})\right] ^{1/p}$.
\end{itemize}

In this work, we will deal with ergodic algebras (see \cite{Jikov, Zhikov4}%
). Let us recall the definition of an ergodic algebra.

\begin{definition}
\label{d2.1}\emph{An algebra wmv }$A$ \emph{on }$\mathbb{R}^{N}$\emph{\ is }%
ergodic\emph{\ if for every }$u\in B_{A}^{1}$\emph{\ such that }$\left\Vert
u-u(\cdot +a)\right\Vert _{1}=0$\emph{\ for every }$a\in \mathbb{R}^{N}$%
\emph{\ we have }$\left\Vert u-M(u)\right\Vert _{1}=0$\emph{.}
\end{definition}

The class of ergodic algebra plays a crucial role in homogenization theory
as it will be seen in the following sections.

In order to simplify the text, we will henceforth use the same letter $u$
(if there is no danger of confusion) to denote the equivalence class of an
element $u\in B_{A}^{p}$. The symbol $\varrho $ will denote the canonical
mapping of $B_{A}^{p}$ onto $\mathcal{B}_{A}^{p}=B_{A}^{p}/\mathcal{N}$. Our
goal here is to define another space attached to $\mathcal{B}_{A}^{p}$. For
that purpose, let us recall that the partial derivative of index $1\leq
i\leq N$ of a distribution $u\in \mathcal{D}^{\prime }(\Delta (A))$, denoted
by $\partial _{i}u$, is defined as follows:

\begin{equation*}
\left\langle \partial _{i}u,\varphi \right\rangle =-\left\langle u,\partial
_{i}\varphi \right\rangle \text{\ for any }\varphi \in \mathcal{D}(\Delta
(A)).\;
\end{equation*}%
With this in mind, we define the formal derivative of index $i$, denoted by $%
\overline{\partial }/\partial y_{i}$, as follows:
\begin{equation*}
\frac{\overline{\partial }}{\partial y_{i}}=\mathcal{G}_{1}^{-1}\circ
\partial _{i}\circ \mathcal{G}_{1}.
\end{equation*}%
Considered as defined from $\mathcal{B}_{A}^{p}$ into itself, it is an
unbounded operator with domain $\mathcal{D}_{i}=\{u\in \mathcal{B}_{A}^{p}:%
\overline{\partial }u/\partial y_{i}\in \mathcal{B}_{A}^{p}\}$. We set
\begin{equation*}
\mathcal{B}_{A}^{1,p}=\cap _{1\leq i\leq N}\mathcal{D}_{i}\equiv \left\{
u\in \mathcal{B}_{A}^{p}:\frac{\overline{\partial }u}{\partial y_{i}}\in
\mathcal{B}_{A}^{p}\;\text{for }1\leq i\leq N\right\} .
\end{equation*}%
Since $\overline{\partial }/\partial y_{i}$ is closed, $\mathcal{B}%
_{A}^{1,p} $ is a Banach space under the norm
\begin{equation*}
\left\| u\right\| _{\mathcal{B}_{A}^{1,p}}=\left[ \left\| u\right\|
_{p}^{p}+\sum_{i=1}^{N}\left\| \frac{\overline{\partial }u}{\partial y_{i}}%
\right\| _{p}^{p}\right] ^{1/p}\text{\ \ }(u\in \mathcal{B}_{A}^{1,p}).
\end{equation*}%
Moreover, the restriction of $\mathcal{G}_{1}$ to $\mathcal{B}_{A}^{1,p}$ is
an isometric isomorphism of $\mathcal{B}_{A}^{1,p}$ onto $W^{1,p}(\Delta
(A)) $. We assume for the remainder of this section that $A$ is ergodic.
Then according to Definition \ref{d2.1}, the only elements of $\mathcal{B}%
_{A}^{1}$ that are $\left\| \cdot \right\| _{1}$-invariant are constant
functions. This infers that if $u\in \mathcal{B}_{A}^{1}$ satisfies $%
\overline{D}_{y}u=0 $, then $u$ is constant. Indeed it can can be shown that
$u\in \mathcal{B}_{A}^{1}$ is $\left\| \cdot \right\| _{1}$-invariant if and
only if $\overline{D}_{y}u=0$. So the mapping
\begin{equation*}
\left\| \overline{D}_{y}\cdot \right\| _{p}:u\mapsto \left\| \overline{D}%
_{y}u\right\| _{p}:=\left( \sum_{i=1}^{N}\left\| \frac{\overline{\partial }u%
}{\partial y_{i}}\right\| _{p}^{p}\right) ^{1/p}
\end{equation*}%
considered as defined on $\mathcal{B}_{A}^{1,p}$, is a norm on the subspace $%
\mathcal{B}_{A}^{1,p}/\mathbb{R}$ of $\mathcal{B}_{A}^{1,p}$ consisting of
functions $u\in \mathcal{B}_{A}^{1,p}$ with $M(u)=0$. Unfortunately, under
this norm, $\mathcal{B}_{A}^{1,p}/\mathbb{R}$ is a normed vector space which
is in general not complete. We denote by $\mathcal{B}_{\#A}^{1,p}$ the
completion of $\mathcal{B}_{A}^{1,p}/\mathbb{R}$ with respect to that norm,
and by $J_{1}$ the canonical embedding of $\mathcal{B}_{A}^{1,p}/\mathbb{R}$
into $\mathcal{B}_{\#A}^{1,p}$. By the theory of completion of uniform
spaces \cite[Chap. II]{Bourbaki}, the mapping $\overline{\partial }/\partial
y_{i}:\mathcal{B}_{A}^{1,p}/\mathbb{R}\rightarrow \mathcal{B}_{A}^{p}$
extends by continuity to a unique continuous linear mapping still denoted by
$\overline{\partial }/\partial y_{i}:\mathcal{B}_{\#A}^{1,p}\rightarrow
\mathcal{B}_{A}^{p}$ such that
\begin{equation}
\frac{\overline{\partial }}{\partial y_{i}}\circ J_{1}=\frac{\overline{%
\partial }}{\partial y_{i}}\text{\ and }\left\| u\right\| _{\mathcal{B}%
_{\#A}^{1,p}}=\left\| \overline{D}_{y}u\right\| _{p}\;\;(u\in \mathcal{B}%
_{\#A}^{1,p})  \label{2.4'}
\end{equation}%
where $\overline{D}_{y}=(\overline{\partial }/\partial y_{i})_{1\leq i\leq
N} $. Since $\mathcal{G}_{1}$ is an isometric isomorphism of $\mathcal{B}%
_{A}^{1,p}$ onto $W^{1,p}(\Delta (A))$ we have by the definition of $%
\overline{\partial }/\partial y_{i}$ that the restriction of $\mathcal{G}%
_{1} $ to $\mathcal{B}_{A}^{1,p}/\mathbb{R}$ sends isometrically and
isomorphically $\mathcal{B}_{A}^{1,p}/\mathbb{R}$ onto $W^{1,p}(\Delta (A))/%
\mathbb{R}$. So by \cite[Chap. II]{Bourbaki} there exists a unique isometric
isomorphism $\overline{\mathcal{G}}_{1}:\mathcal{B}_{\#A}^{1,p}\rightarrow
W_{\#}^{1,p}(\Delta (A))$ such that
\begin{equation}
\overline{\mathcal{G}}_{1}\circ J_{1}=J\circ \mathcal{G}_{1}\;\;\;\;\;\;\;\;%
\;\;\;\;\;\;\;\;\;\;\;\;\;  \label{2.5'}
\end{equation}%
and
\begin{equation}
\partial _{i}\circ \overline{\mathcal{G}}_{1}=\mathcal{G}_{1}\circ \frac{%
\overline{\partial }}{\partial y_{i}}\;\;(1\leq i\leq N).\;\;\;\;\;\;\;\;\;
\label{2.6'}
\end{equation}%
We recall that in this case (when $A$ is ergodic), $J$ is the canonical
embedding of $W^{1,p}(\Delta (A))/\mathbb{R}$ into its completion $%
W_{\#}^{1,p}(\Delta (A))$ while $J_{1}$ is the canonical embedding of $%
\mathcal{B}_{A}^{1,p}/\mathbb{R}$ into $\mathcal{B}_{\#A}^{1,p}$.
Furthermore, Since $\mathcal{B}_{A}^{1,p}/\mathbb{R}$ is dense in $\mathcal{B%
}_{\#A}^{1,p}$ (in fact by the embedding $J_{1}$, $\mathcal{B}_{A}^{1,p}/%
\mathbb{R}$ is viewed as a subspace of $\mathcal{B}_{\#A}^{1,p}$, and by the
theory of completion, $J_{1}(\mathcal{B}_{A}^{1,p}/\mathbb{R})$ is dense in $%
\mathcal{B}_{\#A}^{1,p}$), it follows that, as $A^{\infty }$ is dense in $A$%
, $\varrho (A^{\infty }/\mathbb{R})$ is dense in $\mathcal{B}_{\#A}^{1,p}$,
where $A^{\infty }/\mathbb{R}=\{u\in A^{\infty }:M(u)=0\}$.

\begin{remark}
\label{r2.1}\emph{For }$u\in B_{A}^{1,p}$\emph{\ (that is the space of }$%
u\in B_{A}^{p}$\emph{\ such that }$D_{y}u\in (B_{A}^{p})^{N}$\emph{) we have
}%
\begin{equation*}
\mathcal{G}_{1}\left( \varrho \left( \frac{\partial u}{\partial y_{i}}%
\right) \right) =\mathcal{G}\left( \frac{\partial u}{\partial y_{i}}\right)
=\partial _{i}\mathcal{G}\left( u\right) =\partial _{i}\mathcal{G}_{1}\left(
\varrho (u)\right) =(\text{\emph{by definition}})\,\mathcal{G}_{1}\left(
\frac{\overline{\partial }}{\partial y_{i}}(\varrho (u))\right) ,
\end{equation*}%
\emph{hence }%
\begin{equation*}
\varrho \left( \frac{\partial u}{\partial y_{i}}\right) =\frac{\overline{%
\partial }}{\partial y_{i}}(\varrho
(u)),\;\;\;\;\;\;\;\;\;\;\;\;\;\;\;\;\;\;\;\;\;\;
\end{equation*}%
\emph{or equivalently, }%
\begin{equation}
\varrho \circ \frac{\partial }{\partial y_{i}}=\frac{\overline{\partial }}{%
\partial y_{i}}\circ \varrho \text{\ \emph{on }}B_{A}^{1,p}.  \label{2.3'}
\end{equation}
\end{remark}

\begin{remark}
\label{r2.2}\emph{The above remark shows that }$\overline{\partial }%
/\partial y_{i}$\emph{, viewed as defined on }$\mathcal{B}_{A}^{p}$\emph{,
is in fact the infinitesimal generator of the group of transformations }$%
T(y) $\emph{\ defined on }$\mathcal{B}_{A}^{p}$\emph{\ by }%
\begin{equation*}
T(y)(u+\mathcal{N})=u(\cdot +y)+\mathcal{N}.
\end{equation*}%
\emph{This shows that all the above results can be obtained through the
theory of strongly continuous groups as shown in \cite{Woukeng} (see also
\cite{WoukengArxiv}).}
\end{remark}

\section{The $\Sigma $-convergence method for stochastic processes}

In this section we define an appropriate notion of the concept of $\Sigma $%
-convergence adapted to our situation. It is to be noted that it is built
according to the original notion introduced by Nguetseng \cite{Hom1}. Here
we adapt it to systems involving random behavior. In all that follows, $Q$
is an open subset of $\mathbb{\mathbb{R}}^{N}$ (integer $N\geq 1$), $T$ is a
positive real number and $Q_{T}=Q\times (0,T)$. Let $(\Omega ,\mathcal{F},%
\mathbb{P})$ be a probability space. The expectation on $(\Omega ,\mathcal{F}%
,\mathbb{P})$ will throughout be denoted by $\mathbb{E}$. Let us first
recall the definition of the Banach space of bounded $\mathcal{F}$%
-measurable functions. Denoting by $F(\Omega )$ the Banach space of all
bounded functions $f:\Omega \rightarrow \mathbb{R}$ (with the sup norm), we
define $B(\Omega )$ as the closure in $F(\Omega )$ of the vector space $%
H(\Omega )$ consisting of all finite linear combinations of the
characteristic functions $1_{X}$ of sets $X\in \mathcal{F}$. Since $\mathcal{%
F}$ is an $\sigma $-algebra, $B(\Omega )$ is the Banach space of all bounded
$\mathcal{F}$-measurable functions. Likewise we define the space $B(\Omega
;Z)$ of all bounded $(\mathcal{F},B_{Z})$-measurable functions $f:\Omega
\rightarrow Z$, where $Z$ is a Banach space endowed with the $\sigma $%
-algebra of Borelians $B_{Z}$. The tensor product $B(\Omega )\otimes Z$ is a
dense subspace of $B(\Omega ;Z)$: this follows from the obvious fact that $%
B(\Omega )$ can be viewed as a space of continuous functions over the
\textit{gamma-compactification} \cite{Zhdanok} of the measurable space $%
(\Omega ,\mathcal{F})$, which is a compact topological space. Next, for $X$
a Banach space, we denote by $L^{p}(\Omega ,\mathcal{F},\mathbb{P};X)$ the
space of $X$-valued random variables $u$ such that $\left\Vert u\right\Vert
_{X}$ is $L^{p}(\Omega ,\mathcal{F},\mathbb{P})$-integrable.

This being so, let $A_{y}$ and $A_{\tau }$ be two algebras wmv on $\mathbb{R}%
_{y}^{N}$ and $\mathbb{R}_{\tau }$ respectively, and let $A=A_{y}\odot
A_{\tau }$ be their product as defined in the preceding section. We know
that $A$ is the closure in $BUC(\mathbb{R}_{y,\tau }^{N+1})$ of the tensor
product $A_{y}\otimes A_{\tau }$. We denote by $\Delta (A_{y})$ (resp. $%
\Delta (A_{\tau })$, $\Delta (A)$) the spectrum of $A_{y}$ (resp. $A_{\tau }$%
, $A$). The same letter $\mathcal{G}$ will denote the Gelfand transformation
on $A_{y}$, $A_{\tau }$ and $A$, as well. Points in $\Delta (A_{y})$ (resp. $%
\Delta (A_{\tau })$) are denoted by $s$ (resp. $s_{0}$). The $M$-measure on
the compact space $\Delta (A_{y})$ (resp. $\Delta (A_{\tau })$) is denoted
by $\beta _{y}$ (resp. $\beta _{\tau }$). We have $\Delta (A)=\Delta
(A_{y})\times \Delta (A_{\tau })$ (Cartesian product) and the $M$-measure on
$\Delta (A)$ is precisely the product measure $\beta =\beta _{y}\otimes
\beta _{\tau }$; the last equality follows in an obvious way by the density
of $A_{y}\otimes A_{\tau }$ in $A$ and by the Fubini's theorem. Points in $%
\Omega $ are as usual denoted by $\omega $.

Unless otherwise stated, random variables will always be considered on the
probability space $(\Omega ,\mathcal{F},\mathbb{P})$. Finally, the letter $E$
will throughout denote exclusively an ordinary sequence $(\varepsilon
_{n})_{n\in \mathbb{N}}$ with $0<\varepsilon _{n}\leq 1$ and $\varepsilon
_{n}\rightarrow 0$ as $n\rightarrow \infty $. In what follows, the notations
are those of the preceding section.

\begin{definition}
\label{d3.1}\emph{A sequence of random variables }$(u_{\varepsilon
})_{\varepsilon >0}\subset L^{p}(\Omega ,\mathcal{F},\mathbb{P}%
;L^{p}(Q_{T})) $\emph{\ (}$1\leq p<\infty $\emph{) is said to }weakly $%
\Sigma $-converge\emph{\ in }$L^{p}(Q_{T}\times \Omega )$\emph{\ to some
random variable }$u_{0}\in L^{p}(\Omega ,\mathcal{F},\mathbb{P};L^{p}(Q_{T};%
\mathcal{B}_{A}^{p}))$\emph{\ if as }$\varepsilon \rightarrow 0$\emph{, we
have }
\begin{equation}
\begin{array}{l}
\int_{Q_{T}\times \Omega }u_{\varepsilon }(x,t,\omega )f\left( x,t,\frac{x}{%
\varepsilon },\frac{t}{\varepsilon },\omega \right) dxdtd\mathbb{P} \\
\ \ \ \ \ \ \ \ \ \ \ \ \ \ \rightarrow \iint_{Q_{T}\times \Omega \times
\Delta (A)}\widehat{u}_{0}(x,t,s,s_{0},\omega )\widehat{f}%
(x,t,s,s_{0},\omega )dxdtd\mathbb{P}d\beta%
\end{array}
\label{3.1}
\end{equation}%
\emph{for every }$f\in L^{p^{\prime }}(\Omega ,\mathcal{F},\mathbb{P}%
;L^{p^{\prime }}(Q_{T};A))$\emph{\ (}$1/p^{\prime }=1-1/p$\emph{), where }$%
\widehat{u}_{0}=\mathcal{G}_{1}\circ u_{0}$\emph{\ and }$\widehat{f}=%
\mathcal{G}_{1}\circ (\varrho \circ f)=\mathcal{G}\circ f$\emph{. We express
this by writing} $u_{\varepsilon }\rightarrow u_{0}$ in $L^{p}(Q_{T}\times
\Omega )$-weak $\Sigma $.
\end{definition}

\begin{remark}
\label{r3.0}\emph{The above weak }$\Sigma $\emph{-convergence in }$%
L^{p}(Q_{T}\times \Omega )$\emph{\ implies the weak convergence in }$%
L^{p}(Q_{T}\times \Omega )$\emph{. One can show as in the usual setting of }$%
\Sigma $\emph{-convergence method \cite{Hom1} that each }$f\in L^{p}(\Omega ,%
\mathcal{F},\mathbb{P};L^{p}(Q_{T};A))$\emph{\ weakly }$\Sigma $\emph{%
-converges to }$\varrho \circ f$\emph{.}
\end{remark}

In order to simplify the notation, we will henceforth denote $L^{p}(\Omega ,%
\mathcal{F},\mathbb{P};X)$ merely by $L^{p}(\Omega ;X)$ if it is understood
from the context and there is no danger of confusion. Definition \ref{d3.1}
can be formally motivated by the following fact. Assume $p=2$; then using
the chaos decomposition (see \cite{Cameron, Wiener})\ of $u_{\varepsilon }$
and $f$ we get $u_{\varepsilon }(x,t,\omega )=\sum_{j=1}^{\infty
}u_{\varepsilon ,j}(x,t)\Phi _{j}(\omega )$ and $f(x,t,y,\tau ,\omega
)=\sum_{k=1}^{\infty }f_{k}(x,t,y,\tau )\Phi _{k}(\omega )$ where $%
u_{\varepsilon ,j}\in L^{2}(Q_{T})$ and $f_{k}\in L^{2}(Q_{T};A)$, so that
\begin{equation*}
\int_{Q_{T}\times \Omega }u_{\varepsilon }(x,t,\omega )f\left( x,t,\frac{x}{%
\varepsilon },\frac{t}{\varepsilon },\omega \right) dxdtd\mathbb{P}
\end{equation*}%
can be formally written as
\begin{equation*}
\sum_{j,k}\int_{\Omega }\Phi _{j}(\omega )\Phi _{k}(\omega )d\mathbb{P}%
\int_{Q_{T}}u_{\varepsilon ,j}(x,t)f_{k}\left( x,t,\frac{x}{\varepsilon },%
\frac{t}{\varepsilon }\right) dxdt,
\end{equation*}%
and by the usual $\Sigma $-convergence method (see \cite{NA, Hom1}), as $%
\varepsilon \rightarrow 0$,
\begin{equation*}
\int_{Q_{T}}u_{\varepsilon ,j}(x,t)f_{k}\left( x,t,\frac{x}{\varepsilon },%
\frac{t}{\varepsilon }\right) dxdt\rightarrow \iint_{Q_{T}\times \Delta (A)}%
\widehat{u}_{0,j}(x,t,s,s_{0})\widehat{f}_{k}\left( x,t,s,s_{0}\right)
dxdtd\beta .
\end{equation*}%
Hence, by setting
\begin{equation*}
\widehat{u}_{0}(x,t,s,s_{0},\omega )=\sum_{j=1}^{\infty }\widehat{u}%
_{0,j}(x,t,s,s_{0})\Phi _{j}(\omega );\;\widehat{f}\left( x,t,s,s_{0},\omega
\right) =\sum_{k=1}^{\infty }\widehat{f}_{k}\left( x,t,s,s_{0}\right) \Phi
_{k}(\omega )
\end{equation*}%
we get (\ref{3.1}). We can also see that (\ref{3.1}) is a straight
generalization of the usual concept of $\Sigma $-convergence.

The following result holds.

\begin{theorem}
\label{t3.1}Let $1<p<\infty $. Let $(u_{\varepsilon })_{\varepsilon \in
E}\subset L^{p}(\Omega ;L^{p}(Q_{T}))$ be a sequence of random variables
verifying the following boundedness condition:
\begin{equation*}
\sup_{\varepsilon \in E}\mathbb{E}\left\Vert u_{\varepsilon }\right\Vert
_{L^{p}(Q_{T})}^{p}<\infty .
\end{equation*}%
Then there exists a subsequence $E^{\prime }$ from $E$ such that the
sequence $(u_{\varepsilon })_{\varepsilon \in E^{\prime }}$ is weakly $%
\Sigma $-convergent in $L^{p}(Q_{T}\times \Omega )$.
\end{theorem}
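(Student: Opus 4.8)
The plan is to follow the classical proof of the weak $\Sigma$-convergence compactness theorem (\cite{Hom1, NA}), the genuinely new point being the treatment of the random variable. For $\varepsilon \in E$ and a test function $f \in L^{p'}(\Omega; L^{p'}(Q_T; A))$, write $f^{\varepsilon}(x,t,\omega)=f(x,t,x/\varepsilon,t/\varepsilon,\omega)$ and set
\[
L_{\varepsilon}(f)=\int_{Q_{T}\times\Omega}u_{\varepsilon}\,f^{\varepsilon}\,dx\,dt\,d\mathbb{P}.
\]
Two estimates drive everything. First, since $|f(x,t,\cdot,\cdot,\omega)|^{p'}\le\|f(x,t,\cdot,\cdot,\omega)\|_{\infty}^{p'}$, one has the crude but $\varepsilon$-uniform bound $\|f^{\varepsilon}\|_{L^{p'}(Q_{T}\times\Omega)}\le\|f\|_{L^{p'}(\Omega;L^{p'}(Q_{T};A))}$, whence by H\"older $|L_{\varepsilon}(f)|\le C\|f\|_{L^{p'}(\Omega;L^{p'}(Q_{T};A))}$ with $C=\sup_{\varepsilon}(\mathbb{E}\|u_{\varepsilon}\|_{L^{p}(Q_{T})}^{p})^{1/p}$. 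Second, $|f|^{p'}$ is again admissible (the map $s\mapsto|s|^{p'}$ acts inside the $\mathcal{C}^{*}$-algebra $A$), so applying the mean value property, as in Remark \ref{r3.0}, to $|f|^{p'}$ tested against the constant $1$ gives the \emph{sharp} limit
\[
\|f^{\varepsilon}\|_{L^{p'}(Q_{T}\times\Omega)}^{p'}\longrightarrow\int_{Q_{T}\times\Omega}M(|f|^{p'})\,dx\,dt\,d\mathbb{P}=\|\widehat{f}\|_{L^{p'}(Q_{T}\times\Omega\times\Delta(A))}^{p'},
\]
where I used $\mathcal{G}(|f|^{p'})=|\widehat{f}|^{p'}$ together with property (2).

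Next I would extract the subsequence, and here reflexivity replaces separability of $\Omega$. Fix a countable family $\{g_{n}\}$ dense in the separable space $L^{p'}(Q_{T};A)$ and set $Y_{\varepsilon,n}(\omega)=\int_{Q_{T}}u_{\varepsilon}g_{n}^{\varepsilon}\,dx\,dt$. By the first estimate, $\sup_{\varepsilon}\|Y_{\varepsilon,n}\|_{L^{p}(\Omega)}<\infty$ for each $n$; since $1<p<\infty$, $L^{p}(\Omega)$ is reflexive, so every bounded sequence there has a weakly convergent subsequence (no separability of $\Omega$ is needed, as the sequence lives in a separable reflexive subspace). A diagonal extraction over $n$ produces a subsequence $E'\subset E$ along which $Y_{\varepsilon,n}\rightharpoonup Y_{n}$ weakly in $L^{p}(\Omega)$ for every $n$. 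For $f=\phi\otimes g_{n}$ with $\phi\in L^{p'}(\Omega)$ this yields $L_{\varepsilon}(\phi\otimes g_{n})=\langle\phi,Y_{\varepsilon,n}\rangle\to\langle\phi,Y_{n}\rangle$; combining the density of $\{g_{n}\}$, the density of $L^{p'}(\Omega)\otimes L^{p'}(Q_{T};A)$ in the full test space, and the $\varepsilon$-uniform bound, I would conclude that $L_{\varepsilon}(f)$ converges along $E'$ to a limit $L(f)$ for every admissible $f$, with $L$ linear.

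The final step identifies $L$. Passing to the limit in H\"older's inequality and invoking the sharp mean-value limit above gives $|L(f)|\le C\|\widehat{f}\|_{L^{p'}(Q_{T}\times\Omega\times\Delta(A))}$. Because $\widehat{f}=\mathcal{G}\circ f$ and $\mathcal{G}(A)=\mathcal{C}(\Delta(A))$ is dense in $L^{p'}(\Delta(A))$, the assignment $f\mapsto\widehat{f}$ has dense range in $L^{p'}(Q_{T}\times\Omega\times\Delta(A))$; the sharp bound shows that $L$ depends on $f$ only through $\widehat{f}$ and so extends to a bounded functional on this space. Its dual being $L^{p}(Q_{T}\times\Omega\times\Delta(A))$ (again $1<p<\infty$), the Riesz representation theorem furnishes $\widehat{u}_{0}$ there with $L(f)=\iint_{Q_{T}\times\Omega\times\Delta(A)}\widehat{u}_{0}\,\widehat{f}\,dx\,dt\,d\mathbb{P}\,d\beta$. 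Setting $u_{0}=\mathcal{G}_{1}^{-1}\circ\widehat{u}_{0}$, which lies in $L^{p}(\Omega;L^{p}(Q_{T};\mathcal{B}_{A}^{p}))$ since $\mathcal{G}_{1}$ is an isometric isomorphism of $\mathcal{B}_{A}^{p}$ onto $L^{p}(\Delta(A))$, one has $\widehat{u}_{0}=\mathcal{G}_{1}\circ u_{0}$ and exactly (\ref{3.1}).

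The main obstacle is reconciling the sequential extraction with the possible non-separability of both $\Omega$ and the algebra $A$ (equivalently $L^{p'}(\Delta(A))$), which rules out a naive diagonal argument over a single countable dense set of test functions. The resolution is to let reflexivity, valid precisely because $1<p<\infty$, carry the non-separable directions: reflexivity of $L^{p}(\Omega)$ absorbs the random variable while only a countable family in the separable factor $L^{p'}(Q_{T};A)$ is diagonalised, and reflexivity of $L^{p}(\Delta(A))$ underlies the concluding Riesz step; when $A$ itself fails to be separable (e.g. the almost periodic case) the density of $\{g_{n}\}$ must be replaced by approximation in the Besicovitch seminorm, which is the truly delicate point. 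Equally essential is that the limiting bound carry the \emph{sharp} constant $\|\widehat{f}\|$ rather than $\|f\|$: this is what forces the representative to live on $\Delta(A)$ and upgrades a mere weak $L^{p}(Q_{T}\times\Omega)$ limit into a genuine two-scale limit.
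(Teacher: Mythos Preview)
The paper's own proof is a one-line citation: set $Y=L^{p'}(Q_{T}\times\Omega\times\Delta(A))$ and $X=\mathcal{G}\bigl(L^{p'}(\Omega;L^{p'}(Q_{T};A))\bigr)=L^{p'}(\Omega;L^{p'}(Q_{T};\mathcal{C}(\Delta(A))))$, and invoke \cite[Theorem~3.1]{CMP}. Your write-up is essentially an attempt to unpack that abstract result, and the skeleton you give---the uniform H\"older bound, the sharp mean-value limit $\|f^{\varepsilon}\|_{L^{p'}}\to\|\widehat f\|_{L^{p'}(Q_T\times\Omega\times\Delta(A))}$, and the Riesz representation on the reflexive space $Y$---is exactly the right architecture. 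Your treatment of the probability variable via reflexivity of $L^{p}(\Omega)$ (Eberlein--\v{S}mulian) is also correct and is the genuinely new ingredient over the deterministic case.

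The gap is the one you yourself flag but do not close. Your extraction step assumes $L^{p'}(Q_{T};A)$ is separable, which fails for $A=AP(\mathbb{R}^{N})$, the case the paper is most interested in. The fix you suggest---diagonalize over a countable family dense for the Besicovitch seminorm---does not work as stated: the functionals $L_{\varepsilon}$ are \emph{not} continuous for that seminorm at fixed $\varepsilon$, since two functions $f,g\in A$ with $\|f-g\|_{p'}=0$ can have $\|(f-g)^{\varepsilon}\|_{L^{p'}(Q_{T})}$ bounded away from zero (their traces differ pointwise). Moreover $\mathcal{B}_{A}^{p'}\cong L^{p'}(\Delta(A))$ is itself non-separable in the almost periodic case (the characters $e^{ik\cdot y}$, $k\in\mathbb{R}^{N}$, form an uncountable orthogonal family), so no countable Besicovitch-dense set exists either. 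The abstract theorem in \cite{CMP} bypasses the diagonal argument altogether by working directly with the pair $X\subset Y$, $Y$ reflexive, and exploiting the sharp bound to identify weak-$*$ cluster points of $(L_{\varepsilon})\subset X'$ as elements of $Y'$; to make your argument self-contained you would need to reproduce that mechanism rather than a countable diagonalization.
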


\begin{proof}
Let us set $Y=L^{p^{\prime }}(Q_{T}\times \Omega \times \Delta (A))$ and $%
X=L^{p^{\prime }}(\Omega ;L^{p^{\prime }}(Q_{T};\mathcal{C}(\Delta (A))))=%
\mathcal{G}(L^{p^{\prime }}(\Omega ;L^{p^{\prime }}(Q_{T};A))).$ Applying
\cite[Theorem 3.1]{CMP} with $Y$ and $X$ we are led at once to the result.
\end{proof}

The following result will be very useful in the homogenization process.

\begin{theorem}
\label{t3.2}Let $1<p<\infty $. Let $A=A_{y}\odot A_{\tau }$ be an algebra
wmv on $\mathbb{R}_{y}^{N}\times \mathbb{R}_{\tau }$ with the further
property that $A_{y}$ is ergodic. Finally let $(u_{\varepsilon
})_{\varepsilon \in E}\subset L^{p}(\Omega ;L^{p}(0,T;W_{0}^{1,p}(Q)))$ be a
sequence of random variables which satisfies the following estimate:
\begin{equation*}
\sup_{\varepsilon \in E}\mathbb{E}\left\| u_{\varepsilon }\right\|
_{L^{p}(0,T;W_{0}^{1,p}(Q))}^{p}<\infty .
\end{equation*}%
Then there exist a subsequence $E^{\prime }$ of $E$ and a couple of random
variables $(u_{0},u_{1})$ with $u_{0}\in L^{p}(\Omega
;L^{p}(0,T;W_{0}^{1,p}(Q)))$ and $u_{1}\in L^{p}(\Omega ;L^{p}(Q_{T};%
\mathcal{B}_{A_{\tau }}^{p}(\mathbb{R}_{\tau };\mathcal{B}%
_{\#A_{y}}^{1,p}))) $ such that, as $E^{\prime }\ni \varepsilon \rightarrow
0 $,
\begin{equation}
u_{\varepsilon }\rightarrow u_{0}\ \text{in }L^{p}(Q_{T}\times \Omega )\text{%
-weak;}  \label{3.0}
\end{equation}%
\begin{equation}
\frac{\partial u_{\varepsilon }}{\partial x_{i}}\rightarrow \frac{\partial
u_{0}}{\partial x_{i}}+\frac{\overline{\partial }u_{1}}{\partial y_{i}}\text{%
\ in }L^{p}(Q_{T}\times \Omega )\text{-weak }\Sigma \text{, }1\leq i\leq N.
\label{3.2}
\end{equation}
\end{theorem}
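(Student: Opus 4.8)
The plan is to reduce the statement to the basic weak $\Sigma$-compactness of Theorem \ref{t3.1} together with the structural consequences of ergodicity of $A_{y}$. First I would record that the uniform bound on $\mathbb{E}\Vert u_{\varepsilon }\Vert _{L^{p}(0,T;W_{0}^{1,p}(Q))}^{p}$ makes both $(u_{\varepsilon })$ and each $(\partial u_{\varepsilon }/\partial x_{i})$ bounded in $L^{p}(Q_{T}\times \Omega )$. Applying Theorem \ref{t3.1} to each of these $N+1$ sequences and passing to a common subsequence $E^{\prime }$, I obtain weak $\Sigma $-limits $w_{0},v_{1},\dots ,v_{N}\in L^{p}(\Omega ;L^{p}(Q_{T};\mathcal{B}_{A}^{p}))$ with $u_{\varepsilon }\rightarrow w_{0}$ and $\partial u_{\varepsilon }/\partial x_{i}\rightarrow v_{i}$ weakly $\Sigma $. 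By Remark \ref{r3.0} the first convergence upgrades to ordinary weak convergence $u_{\varepsilon }\rightharpoonup u_{0}:=M(w_{0})$ in $L^{p}(Q_{T}\times \Omega )$; since the sequence is bounded in $L^{p}(\Omega ;L^{p}(0,T;W_{0}^{1,p}(Q)))$, the limit $u_{0}$ lies in that space and $\partial u_{\varepsilon }/\partial x_{i}\rightharpoonup \partial u_{0}/\partial x_{i}$ weakly, whence $M(v_{i})=\partial u_{0}/\partial x_{i}$. This already yields \eqref{3.0}.

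The next step is to show that the $\Sigma $-limit $w_{0}$ is independent of the fast space variable $y$. I would test $\partial u_{\varepsilon }/\partial x_{i}$ against the $\varepsilon $-scaled functions $\varepsilon \Phi (x,t,x/\varepsilon ,t/\varepsilon ,\omega )$ with $\Phi \in \mathcal{C}_{0}^{\infty }(Q_{T})\otimes (A_{y}^{\infty }\otimes A_{\tau })\otimes B(\Omega )$ and integrate by parts in $x$, using $\partial _{x_{i}}[\varepsilon \Phi ^{\varepsilon }]=\varepsilon (\partial _{x_{i}}\Phi )^{\varepsilon }+(\partial _{y_{i}}\Phi )^{\varepsilon }$. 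The $\varepsilon $-prefactor sends the left-hand side to $0$, while on the right the slow-derivative term is $O(\varepsilon )$ and the fast-derivative term $-\int u_{\varepsilon }(\partial _{y_{i}}\Phi )^{\varepsilon }$ converges by the weak $\Sigma $-convergence of $(u_{\varepsilon })$. Passing to the limit and localizing in $(x,t,\omega )$ gives $\int_{\Delta (A)}\widehat{w}_{0}\,\partial _{i}\widehat{\Phi }\,d\beta =0$ for all such $\Phi $, i.e. $\overline{\partial }w_{0}/\partial y_{i}=0$ for every $i$. Ergodicity of $A_{y}$ (the characterization that $\overline{D}_{y}u=0$ forces $u$ constant in $y$) then shows $w_{0}$ is $y$-independent.

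To produce the corrector I would use divergence-free test fields. Take $\phi _{i}(x,t,y,\tau ,\omega )=\varphi (x,t,\omega )g(\tau )\chi _{i}(y)$ with $\varphi \in \mathcal{C}_{0}^{\infty }(Q_{T})\otimes B(\Omega )$, $g\in A_{\tau }$, and $\chi =(\chi _{i})\in (A_{y}^{\infty })^{N}$ solenoidal, $\Div_{y}\chi =0$, with zero $y$-mean $M_{y}(\chi _{i})=0$. Testing $\partial u_{\varepsilon }/\partial x_{i}$ against $\phi _{i}^{\varepsilon }$ and integrating by parts in $x$, the solenoidal condition annihilates the $1/\varepsilon $ term, while the remaining slow term vanishes in the limit because $w_{0}$ is $y$-independent and $M_{y}(\chi _{i})=0$. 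Comparing with the weak $\Sigma $-limit $\sum_{i}\iint \widehat{v}_{i}\widehat{\phi }_{i}$ and varying $\varphi ,g,\chi $ gives, for a.e. $(x,t,s_{0},\omega )$, the orthogonality $\sum_{i}\int_{\Delta (A_{y})}\widehat{v}_{i}\widehat{\chi }_{i}\,d\beta _{y}=0$ over all mean-zero solenoidal $y$-fields; since $\partial u_{0}/\partial x_{i}$ is $y$-independent and $M_{y}(\chi _{i})=0$, the same holds for the fluctuation $z_{i}:=v_{i}-\partial u_{0}/\partial x_{i}$. By the de Rham-type characterization of potential versus solenoidal fields in $\mathcal{B}_{A_{y}}^{p}$, valid precisely because $A_{y}$ is ergodic (see e.g. \cite{Jikov,Zhikov4,NA}), there exists $u_{1}$ with $z_{i}=\overline{\partial }u_{1}/\partial y_{i}$; the isometry \eqref{2.4'} gives $\Vert u_{1}\Vert _{\mathcal{B}_{\#A_{y}}^{1,p}}=\Vert z\Vert _{p}$ pointwise, and integrating over $(x,t,\tau ,\omega )$ delivers the asserted membership $u_{1}\in L^{p}(\Omega ;L^{p}(Q_{T};\mathcal{B}_{A_{\tau }}^{p}(\mathbb{R}_{\tau };\mathcal{B}_{\#A_{y}}^{1,p})))$ and establishes \eqref{3.2}.

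The genuinely nonroutine ingredient, and the step I expect to be the main obstacle, is the potential-field (de Rham/Helmholtz) decomposition in $\mathcal{B}_{A_{y}}^{p}$ and the upgrade of the pointwise-in-$(x,t,\tau ,\omega )$ construction of $u_{1}$ to a bona fide element of the Bochner space with the stated $L^{p}$-integrability and measurability; this requires either a measurable-selection argument or a closed-graph argument built on the isometry \eqref{2.4'}, and it is here that the product structure $\Delta (A)=\Delta (A_{y})\times \Delta (A_{\tau })$ with $\beta =\beta _{y}\otimes \beta _{\tau }$, together with the density of the separated-variable test functions $\mathcal{C}_{0}^{\infty }(Q_{T})\otimes (A_{y}^{\infty }\otimes A_{\tau })\otimes B(\Omega )$, must be used to guarantee that all orthogonality relations hold for almost every parameter. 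The limit passages themselves are routine once weak $\Sigma $-convergence and these density facts are in hand.
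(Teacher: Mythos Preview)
Your proposal is correct and follows essentially the same route as the paper, which does not give a self-contained argument but simply refers to \cite[Theorem 3.6]{NA}; the strategy there is precisely the one you outline---weak $\Sigma$-compactness via Theorem \ref{t3.1}, the $\varepsilon$-scaled integration by parts to force $y$-independence of the $\Sigma$-limit of $u_{\varepsilon}$, and then the identification of the corrector via solenoidal test fields and the potential/solenoidal decomposition in the ergodic Besicovitch setting. Your caveat about the measurability/Bochner-integrability of $u_{1}$ is well placed, and in \cite{NA} this is handled exactly along the lines you suggest, through the isometry \eqref{2.4'} and density of separated-variable test functions.
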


\begin{proof}
The proof of the above theorem follows exactly the same lines of reasoning
as the one of \cite[Theorem 3.6]{NA}.
\end{proof}

In practice, we will mostly deal with the following modified version of the
above theorem.

\begin{theorem}
\label{t3.3}Assume that the hypotheses of Theorem \emph{\ref{t3.2}} are
satisfied. Assume further that $p\geq 2$ and that there exist a subsequence $%
E^{\prime }$ from $E$ and a random variable $u_{0}\in L^{p}(\Omega
;L^{p}(0,T;W_{0}^{1,p}(Q)))$ such that, as $E^{\prime }\ni \varepsilon
\rightarrow 0$,
\begin{equation}
u_{\varepsilon }\rightarrow u_{0}\text{\ in }L^{2}(Q_{T}\times \Omega )\text{%
.}  \label{3.3}
\end{equation}%
Then there exist a subsequence of $E^{\prime }$ (not relabeled) and a $%
\mathcal{B}_{A_{\tau }}^{p}(\mathbb{R}_{\tau };\mathcal{B}_{\#A_{y}}^{1,p})$%
-valued stochastic process $u_{1}\in L^{p}(\Omega ;L^{p}(Q_{T};\mathcal{B}%
_{A_{\tau }}^{p}(\mathbb{R}_{\tau };\mathcal{B}_{\#A_{y}}^{1,p})))$ such
that \emph{(\ref{3.2})} holds when $E^{\prime }\ni \varepsilon \rightarrow 0$%
.
\end{theorem}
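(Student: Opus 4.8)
The plan is to obtain Theorem \ref{t3.3} as a corollary of Theorem \ref{t3.2}, using the extra strong convergence (\ref{3.3}) only to identify the macroscopic limit. First I would observe that the subsequence $(u_{\varepsilon })_{\varepsilon \in E^{\prime }}$ inherits the uniform estimate
\[
\sup_{\varepsilon \in E^{\prime }}\mathbb{E}\left\Vert u_{\varepsilon }\right\Vert _{L^{p}(0,T;W_{0}^{1,p}(Q))}^{p}<\infty ,
\]
so that it satisfies all the hypotheses of Theorem \ref{t3.2}. Applying that theorem to $(u_{\varepsilon })_{\varepsilon \in E^{\prime }}$ produces a further subsequence (not relabeled) together with a couple $(\bar{u}_{0},u_{1})$, where $\bar{u}_{0}\in L^{p}(\Omega ;L^{p}(0,T;W_{0}^{1,p}(Q)))$ and $u_{1}\in L^{p}(\Omega ;L^{p}(Q_{T};\mathcal{B}_{A_{\tau }}^{p}(\mathbb{R}_{\tau };\mathcal{B}_{\#A_{y}}^{1,p})))$, such that
\[
u_{\varepsilon }\rightarrow \bar{u}_{0}\ \text{in }L^{p}(Q_{T}\times \Omega )\text{-weak}
\]
and
\[
\frac{\partial u_{\varepsilon }}{\partial x_{i}}\rightarrow \frac{\partial \bar{u}_{0}}{\partial x_{i}}+\frac{\overline{\partial }u_{1}}{\partial y_{i}}\ \text{in }L^{p}(Q_{T}\times \Omega )\text{-weak }\Sigma ,\quad 1\leq i\leq N.
\]
This already delivers a corrector $u_{1}$ lying in exactly the space required by the statement; what remains is to show that the limit $\bar{u}_{0}$ produced here is the prescribed $u_{0}$.

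The decisive step, and the only genuine point to verify, is the identification $\bar{u}_{0}=u_{0}$; this is where the assumptions $p\geq 2$ and (\ref{3.3}) are used. Since $Q$ is bounded and $(\Omega ,\mathcal{F},\mathbb{P})$ is a probability space, the measure on $Q_{T}\times \Omega $ is finite, whence $L^{p}(Q_{T}\times \Omega )\hookrightarrow L^{2}(Q_{T}\times \Omega )$ and, dually, every test function in $L^{2}(Q_{T}\times \Omega )$ belongs to $L^{p^{\prime }}(Q_{T}\times \Omega )$ (as $p^{\prime }\leq 2$). Consequently the weak $L^{p}$ convergence above entails $u_{\varepsilon }\rightarrow \bar{u}_{0}$ weakly in $L^{2}(Q_{T}\times \Omega )$. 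On the other hand, the strong convergence (\ref{3.3}) gives $u_{\varepsilon }\rightarrow u_{0}$ in $L^{2}(Q_{T}\times \Omega )$, hence weakly in $L^{2}(Q_{T}\times \Omega )$ as well. By uniqueness of the weak $L^{2}$ limit we obtain $\bar{u}_{0}=u_{0}$.

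Substituting $\bar{u}_{0}=u_{0}$ into the weak $\Sigma $-limit of the gradients above yields precisely (\ref{3.2}) along the extracted subsequence, with the given $u_{0}$ and the corrector $u_{1}$, which completes the argument. Thus the whole weight of the proof rests on Theorem \ref{t3.2} for the compactness and the construction of $u_{1}$, while the additional hypotheses serve only to pin the macroscopic limit down to the preassigned $u_{0}$; I do not anticipate any obstacle beyond the routine limit-matching described above.
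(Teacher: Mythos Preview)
Your argument is correct and follows essentially the same route as the paper: extract a further subsequence with weak $L^{p}$ limit $v_{0}$ (the paper does this directly by reflexivity, you do it via Theorem~\ref{t3.2}), use $p\geq 2$ to pass to weak $L^{2}$ convergence, and invoke uniqueness of the weak limit together with (\ref{3.3}) to force $v_{0}=u_{0}$; the corrector $u_{1}$ then comes from the proof of \cite[Theorem~3.6]{NA}, exactly as the paper indicates.
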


\begin{proof}
Since $(u_{\varepsilon })_{\varepsilon \in E^{\prime }}$ is bounded in $%
L^{p}(\Omega ;L^{p}(0,T;W_{0}^{1,p}(Q)))$, there exist a subsequence of $%
E^{\prime }$ not relabeled and $v_{0}\in L^{p}(\Omega
;L^{p}(0,T;W_{0}^{1,p}(Q)))$ such that $u_{\varepsilon }\rightarrow v_{0}$
in $L^{p}(Q_{T}\times \Omega )$-weak (and hence in $L^{2}(Q_{T}\times \Omega
)$-weak since $p\geq 2$) as $E^{\prime }\ni \varepsilon \rightarrow 0$. From
(\ref{3.3}) and owing to the uniqueness of the weak-limit, we infer that $%
u_{0}=v_{0}$, so that (\ref{3.0}) holds true with $u_{0}$ as in (\ref{3.3}).
The remainder of the proof follows exactly the same lines of reasoning as in
the proof of \cite[Theorem 3.6]{NA}.
\end{proof}

\section{Statement of the problem: a priori estimates and tightness property}

\subsection{Problem setting}

Let $(\Omega ,\mathcal{F},\mathbb{P})$ be a probability space on which is
defined an infinite sequence of independent standard 1-d Brownian motion $%
(W_{k})_{k\geq 1}$. We equip the probability space by the natural
filtration, denoted by $\mathcal{F}^{t}$, of $W_{k}$. Now let $\mathcal{U}$
be a fixed Hilbert space with orthonormal basis $\left\{ e_{k}:k\geq
1\right\} $. We may define a cylindrical Wiener process $W$ by setting $%
W=\sum_{k=1}^{\infty }W_{k}e_{k}$ (see \cite{DaPrato}). By $L_{2}(\mathcal{U}%
,X)$ we denote the space of Hilbert-Schmidt operators from $\mathcal{U}$ to
the Hilbert space $X$:
\begin{equation*}
L_{2}(\mathcal{U},X)=\left\{ R\in L(\mathcal{U},X):\sum_{k=1}^{\infty
}|Re_{k}|_{X}^{2}<\infty \right\} .
\end{equation*}%
We can define another Hilbert space $\mathcal{U}_{0}\subset \mathcal{U}$ by
setting
\begin{equation*}
\mathcal{U}_{0}=\left\{ v=\sum_{k=1}^{\infty }\alpha
_{k}e_{k}:\sum_{k=1}^{\infty }\alpha _{k}^{2}k^{-2}<\infty \right\} .
\end{equation*}%
Note that the embedding $\mathcal{U}_{0}\subset \mathcal{U}$ is
Hilbert-Schmidt. We endow $\mathcal{U}_{0}$ with the norm $\left\vert
v\right\vert _{\mathcal{U}_{0}}^{2}=\sum_{k=1}^{\infty }\alpha
_{k}^{2}k^{-2} $. It is a well known fact that there exists $\Omega ^{\prime
}\in \mathcal{F}$ with $\mathbb{P}(\Omega ^{\prime })=1$ such that $W(\omega
)\in \mathcal{C}(0,T;\mathcal{U}_{0})$ for any $\omega \in \Omega ^{\prime }$
(see, for example, \cite{DaPrato}).

For any given $G\in L^{2}(\Omega ;L^{2}(0,T;L_{2}(\mathcal{U},X))$ such that
$G(t)$ is $\mathcal{F}^{t}$-adapted we may define the stochastic integral
\begin{equation*}
\int_{0}^{t}GdW=\sum_{k=1}^{\infty }\int_{0}^{t}Ge_{k}dW_{k},
\end{equation*}%
as an element of the space of $X$-valued square integrable martingale.
Moreover we have
\begin{equation*}
\mathbb{E}\sup_{t\in \lbrack 0,T]}\left| \int_{0}^{t}GdW\right| ^{r}\leq C%
\mathbb{E}\left( \int_{0}^{T}\left| G\right| _{L_{2}(\mathcal{U}%
,X)}^{2}\right) ^{\frac{r}{2}},
\end{equation*}%
for any $r\geq 1$. For the two results and more details on stochastic
calculus in infinite dimension we refer to \cite{DaPrato}. From now we will
set $\left| G\right| _{L_{2}}=\left| G\right| _{L_{2}(\mathcal{U},X)}$ for
any Hilbert space $X$ and for any $G\in L_{2}(\mathcal{U},X)$.

Let $Q\subset \mathbb{R}^{N}$ be an open and bounded domain with smooth
boundary. Throughout we will set $H=L^{2}(Q)$, $V=W_{0}^{1,p}(Q)$ and denote
by $\left\vert u\right\vert ,u\in H$, $\left\Vert v\right\Vert ,v\in V$
their respective norms. We will also denote by $\left\vert \nu \right\vert $%
, $\nu \in \mathbb{R}^{N}$ the Euclidian norm on $\mathbb{R}^{N}$. The
symbol $V^{\prime }$ will denote the dual of $V$ and $\left\langle
u,v\right\rangle $ denotes the duality pairing between $u\in V^{\prime }$
and $v\in V$. The inner product in $H$ is denoted by $(u,v)$ for any $u,v\in
H$. In this work we are interested in the asymptotic behaviour as $%
\varepsilon \rightarrow 0$ of the solution of (\ref{1.1}) which is defined
on the stochastic system $(\Omega ,\mathcal{F},\mathbb{P}),\mathcal{F}^{t},W$%
. We assume that all the coefficients in (\ref{1.1}) are measurable with
respect to each of their arguments. Furthermore, for a.e $(x,t)\in Q\times
(0,T)$, $(y,\tau )\in \mathbb{R}^{N}\times \mathbb{R}$ and for all $\mu \in
\mathbb{R}$ and $\lambda \in \mathbb{R}^{N}$, we assume that

\begin{itemize}
\item[\textbf{A1.}] $a(x,t,y,\tau,\mu,0)=0$,

\item[\textbf{A2.}] $(a(x,t,y,\tau ,\mu ,\lambda )-a(x,t,y,\tau ,\mu
,\lambda ^{\prime })\cdot (\lambda -\lambda ^{\prime }))\geq c_1 \left|
\lambda -\lambda ^{\prime }\right| ^{p}$,

\item[\textbf{A3.}] $\left| a(x,t,y,\tau ,\mu ,\lambda )\right| \leq
c_{2}(1+\left| \mu \right| ^{p-1}+\left| \lambda \right| ^{p-1})$,

\item[\textbf{A4.}] $\left| a_{0}(x,t,y,\tau ,\mu )\right| \leq
c_{3}(1+\left| \mu \right| )$,

\item[\textbf{A5.}] $\left| a_{0}(x,t,y,\tau ,\mu )-a_{0}(x,t,y,\tau ,\mu
^{\prime })\right| \leq c_{4}\left| \mu -\mu ^{\prime }\right| ,$

\begin{itemize}
\item[\textbf{A6}. \textbf{(a)}] $\left\vert a_{0}(x,t,y,\tau
,u)-a_{0}(x^{\prime },t^{\prime },y,\tau ,u^{\prime })\right\vert \leq
m(\left\vert x-x^{\prime }\right\vert +\left\vert t-t^{\prime }\right\vert
+\left\vert u-u^{\prime }\right\vert )(1+\left\vert u\right\vert +\left\vert
u^{\prime }\right\vert )$,

\item[\textbf{A6}. \textbf{(b)}] $\left\vert a(x,t,y,\tau ,u,\mathbf{v}%
)-a(x^{\prime },t^{\prime },y,\tau ,u^{\prime },\mathbf{v}^{\prime
})\right\vert \leq m(\left\vert x-x^{\prime }\right\vert +\left\vert
t-t^{\prime }\right\vert +\left\vert u-u^{\prime }\right\vert
^{p-1}+\left\vert u^{\prime }\right\vert ^{p-1}+\left\vert \mathbf{v}%
\right\vert ^{p-1}+\left\vert \mathbf{v}^{\prime }\right\vert
^{p-1})+C(1+\left\vert u\right\vert +\left\vert u^{\prime }\right\vert
+\left\vert \mathbf{v}\right\vert +\left\vert \mathbf{v}^{\prime
}\right\vert )^{p-2}\left\vert \mathbf{v}-\mathbf{v}^{\prime }\right\vert ,$
\end{itemize}

where $m$ is a continuity modulus (i.e., a nondecreasing continuous function
on $[0,+\infty)$ such that $m(0) = 0, m(r) > 0$ if $r > 0$, and $m(r) = 1$
if $r > 1$).
\end{itemize}

As far as the operator $M$ is concerned, we will suppose that

\begin{itemize}
\item[\textbf{A7.}] it is a measurable mapping from $\mathbb{R}^{N}\times
\mathbb{R}\times H$ into $L_{2}(\mathcal{U},H)$ such that

\begin{itemize}
\item[(a)] $\left\vert M(y,\tau ,u)-M(y,\tau ,v)\right\vert _{L_{2}}\leq
c_{6}\left\vert u-v\right\vert ,$

\item[(b)] $\left\vert M(y,\tau ,u)\right\vert _{L_{2}}\leq
c_{7}(1+\left\vert u\right\vert ).$
\end{itemize}
\end{itemize}

We note that an example of nontrivial functions $a$, $a_{0}$ and $M$
satisfying \textbf{A1}.-\textbf{A7}. are the functions $a(x,t,y,\tau ,\mu
,\lambda )=g(x,t,y,\tau )\left\vert \lambda \right\vert ^{p-2}\lambda $, $%
a_{0}(x,t,y,\tau ,\mu )=g_{0}(x,t,y,\tau )h(\mu )$, $M(y,\tau
,u)=(M_{k}(y,\tau ,u))_{k\geq 1}$ with $M_{k}(y,\tau ,u)=g_{1}(y,\tau
)\lambda _{k}u\geq 0$, where $\sum_{k=1}^{\infty }\left\vert \lambda
_{k}\right\vert ^{2}<\infty $, $g$, $g_{0}\in \mathcal{C}(\overline{Q}_{T};%
\mathcal{B}(\mathbb{R}_{y,\tau }^{N+1}))$, $g_{1}\in \mathcal{B}(\mathbb{R}%
_{y,\tau }^{N+1})$ and $h$ is a continuous Lipschitz function on $\mathbb{R}$%
. We recall that $\mathcal{B}(\mathbb{R}_{y,\tau }^{N+1})$ is the space of
bounded continuous real-valued functions defined on $\mathbb{R}_{y,\tau
}^{N+1}$.

Note that \textbf{A1}.-\textbf{A3}. imply that $A(x,t,y,\tau ,u,Du)\equiv -%
\Div a(x,t,y,\tau ,u,Du)$ satisfies

\begin{enumerate}
\item[\textbf{C1.}]
\begin{eqnarray*}
&&\left\langle A(x,t,y,\tau ,u,Du)-A(x,t,y,\tau ,v,Dv),u-v\right\rangle \\
&\geq &\int_{Q}(a(x,t,y,\tau ,u,Du)-a(x,t,y,\tau ,v,Dv))\cdot (u-v)dx,
\end{eqnarray*}

\item[\textbf{C2.}] $\left\langle A(x,t,y,\tau ,u,Du),u\right\rangle \geq
c_{1}\left\vert Du\right\vert ^{p}$,

\item[\textbf{C3.}] $\left\| A(x,t,y,\tau ,u,Du)\right\| _{W^{-1,p^{\prime
}}(Q)}^{p^{\prime }}\leq c_{2}^{\prime }(1+\left| Du\right| ^{p})$ for some
positive constant $c_{2}^{\prime }$ depending only on $Q_{T}$ and on $c_{2}$,

\item[\textbf{C4.}] the mapping $\theta \rightarrow \left\langle
A(x,t,y,\tau ,u+\theta v,D(u+\theta v)),w\right\rangle :\mathbb{R}%
\rightarrow \mathbb{R}$ is a continuous function for any $u,v,w\in V$.
\end{enumerate}

To simplify the notations we will set throughout
\begin{equation*}
a^{\varepsilon }(\cdot ,u_{\varepsilon },Du_{\varepsilon })(x,t)=a\left( x,t,%
\frac{x}{\varepsilon },\frac{t}{\varepsilon },u_{\varepsilon
},Du_{\varepsilon }\right) ,
\end{equation*}%
\begin{equation*}
a_{0}^{\varepsilon }(\cdot ,u_{\varepsilon })(x,t)=a_{0}\left( x,t,\frac{x}{%
\varepsilon },\frac{t}{\varepsilon },u_{\varepsilon }\right) ,
\end{equation*}%
\begin{equation*}
M^{\varepsilon }(\cdot ,u_{\varepsilon })(x,t)=M\left( \frac{x}{\varepsilon }%
,\frac{t}{\varepsilon },u_{\varepsilon }\right) ,
\end{equation*}%
and
\begin{equation*}
A^{\varepsilon }(\cdot ,u_{\varepsilon },Du_{\varepsilon })(x,t)=A\left( x,t,%
\frac{x}{\varepsilon },\frac{t}{\varepsilon },u_{\varepsilon
},Du_{\varepsilon }\right) .
\end{equation*}%
It is to be noted that the just defined functions make sense as trace
functions; see e.g. \cite{NA, AMPA} for the justification. By a strong
probabilistic solution of (\ref{1.1}) we mean an $\mathcal{F}^{t}$-adapted
stochastic process $u_{\varepsilon }$ such that:
\begin{equation*}
u_{\varepsilon }\in L^{p}(\Omega ,\mathcal{F},\mathbb{P};L^{p}(0,T;V))\cap
L^{2}(\Omega ,\mathcal{F},\mathbb{P};\mathcal{C}(0,T;H)),
\end{equation*}%
and for all $\phi \in V$ and for almost every $(\omega ,t)\in \Omega \times
\lbrack 0,T]$ the following holds true
\begin{equation*}
\begin{split}
(u_{\varepsilon }(t),\phi )+\int_{0}^{t}(a^{\varepsilon }(\cdot
,u_{\varepsilon }(s),Du_{\varepsilon }(s)),D\phi )ds& =(u^{0},\phi
)-\int_{0}^{t}(a_{0}^{\varepsilon }(\cdot ,u_{\varepsilon }(s)),\phi )ds \\
& +\sum_{k=1}^{\infty }\int_{0}^{t}(M_{k}^{\varepsilon }(\cdot
,u_{\varepsilon }(s)),\phi )dW_{k},
\end{split}%
\end{equation*}%
where $M_{k}^{\varepsilon }(\cdot ,u_{\varepsilon }(s))=M^{\varepsilon
}(\cdot ,u_{\varepsilon }(s))e_{k}$. Under the above conditions, it is
easily seen that if $u_{\varepsilon }$ and $v_{\varepsilon }$ are two
solutions to (\ref{1.1}) on the same stochastic system $(\Omega ,\mathcal{F},%
\mathbb{P}),\mathcal{F}^{t},W$ with the same initial condition $u^{0}$, then
$u_{\varepsilon }(t)=v_{\varepsilon }(t)$ in $H$ almost surely for any $t$.
Thanks to this fact together with the Yamada-Watanabe's Theorem (see \cite%
{revuz}) and the existence result of martingale solutions in \cite%
{bensoussan2}, we see that (\ref{1.1}) has a unique strong probabilistic
solution.

\subsection{The \textit{a} priori estimates}

Throughout $C$ will denote a generic constant independent of $\varepsilon $.
We have the following result whose proof can be obtained in a standard way.

\begin{lemma}
\label{l0:1}The solution $u_{\varepsilon }$ of \emph{(\ref{1.1})} satisfies
the following inequalities
\begin{align}
\mathbb{E}\sup_{0\leq t\leq T}\left\vert u_{\varepsilon }(t)\right\vert
^{4}& \leq C,  \label{0:3} \\
\mathbb{E}\int_{0}^{T}\left\vert Du_{\varepsilon }(t)\right\vert ^{p}dt&
\leq C.  \label{0:4}
\end{align}
\end{lemma}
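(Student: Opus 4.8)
The estimates \eqref{0:3}--\eqref{0:4} are the standard energy bounds for the monotone-type SPDE \eqref{1.1}, and the key tool is the It\^{o} formula applied to suitable functionals of $u_\varepsilon$ combined with the structural hypotheses \textbf{A1}.--\textbf{A7}. and their consequences \textbf{C1}.--\textbf{C4}. The plan is to first apply the infinite-dimensional It\^{o} formula to $|u_\varepsilon(t)|^2$ (recall $|\cdot|$ is the $H=L^2(Q)$ norm), which gives
\begin{equation*}
|u_\varepsilon(t)|^2 + 2\int_0^t (a^\varepsilon(\cdot,u_\varepsilon,Du_\varepsilon),Du_\varepsilon)\,ds
= |u^0|^2 - 2\int_0^t (a_0^\varepsilon(\cdot,u_\varepsilon),u_\varepsilon)\,ds
+ \sum_{k=1}^\infty \int_0^t |M_k^\varepsilon(\cdot,u_\varepsilon)|^2\,ds
+ 2\int_0^t (u_\varepsilon, M^\varepsilon(\cdot,u_\varepsilon)\,dW).
\end{equation*}
Here the coercivity assumption \textbf{A2}. together with \textbf{A1}. (so that \textbf{C2}. holds) controls the second term on the left from below by $c_1\int_0^t |Du_\varepsilon|^p\,ds$, while \textbf{A4}. bounds the $a_0$-term and \textbf{A7}.(b) bounds the It\^{o}-correction term $\sum_k |M_k^\varepsilon|^2 = |M^\varepsilon|_{L_2}^2 \le c_7^2(1+|u_\varepsilon|)^2$ linearly in $|u_\varepsilon|^2$.

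Next I would take the supremum over $t\in[0,T]$ and then expectations. The deterministic terms are handled by Young's inequality (to absorb $\int|u_\varepsilon|^2$ into a Gronwall-type argument) and by the linear growth bounds. The one genuinely stochastic term, $\mathbb{E}\sup_{0\le t\le T}\big|\int_0^t (u_\varepsilon, M^\varepsilon\,dW)\big|$, is estimated using the Burkholder--Davis--Gundy inequality (stated in the excerpt in the form $\mathbb{E}\sup_{t}|\int_0^t G\,dW|^r \le C\,\mathbb{E}(\int_0^T |G|_{L_2}^2)^{r/2}$), which yields a bound of the form $C\,\mathbb{E}\big(\int_0^T |u_\varepsilon|^2\,|M^\varepsilon|_{L_2}^2\,ds\big)^{1/2} \le \tfrac12\mathbb{E}\sup_t|u_\varepsilon(t)|^2 + C\,\mathbb{E}\int_0^T(1+|u_\varepsilon|^2)\,ds$ after a further Young splitting; the first piece is absorbed into the left-hand side. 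A Gronwall argument in the resulting integral inequality for $\mathbb{E}\sup_{0\le s\le t}|u_\varepsilon(s)|^2$ then gives $\mathbb{E}\sup_{0\le t\le T}|u_\varepsilon(t)|^2 \le C$, and feeding this back into the energy identity produces \eqref{0:4}.

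To upgrade the second-moment bound to the fourth-moment bound \eqref{0:3}, I would apply It\^{o}'s formula to $|u_\varepsilon(t)|^4$ (equivalently, to $\varphi(|u_\varepsilon|^2)$ with $\varphi(r)=r^2$), producing the extra quadratic-variation term $\int_0^t |u_\varepsilon|^2|M^\varepsilon|_{L_2}^2\,ds$ and a martingale term $\int_0^t |u_\varepsilon|^2(u_\varepsilon,M^\varepsilon\,dW)$. The same scheme applies: coercivity still gives a favourable sign on the $a$-term, \textbf{A4}. and \textbf{A7}.(b) give linear-in-$|u_\varepsilon|^4$ control of the remaining drift and correction terms, and BDG with $r=1$ applied to the new martingale controls $\mathbb{E}\sup_t|\int_0^t|u_\varepsilon|^2(u_\varepsilon,M^\varepsilon dW)|$ by $\tfrac12\mathbb{E}\sup_t|u_\varepsilon(t)|^4$ plus a Gronwall-absorbable remainder. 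A final Gronwall step delivers \eqref{0:3}.

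The main obstacle, and the point requiring the most care, is the rigorous justification of the It\^{o} formula in this setting: $u_\varepsilon$ lives in the Gelfand triple $V\subset H\subset V'$ with $V=W_0^{1,p}(Q)$ and the duality pairing $\langle\cdot,\cdot\rangle$, and the test-function formulation only holds for $\phi\in V$, so one cannot directly plug $u_\varepsilon$ into the weak form. The standard remedy is to invoke the It\^{o} formula for the square of the $H$-norm valid for processes in $L^p(0,T;V)\cap L^2(\Omega;\mathcal{C}(0,T;H))$ with $V'$-valued drift (as in the Krylov--Rozovskii / Pardoux theory), whose applicability is guaranteed here by the growth bound \textbf{C3}. ensuring $a^\varepsilon(\cdot,u_\varepsilon,Du_\varepsilon)\in L^{p'}(0,T;W^{-1,p'}(Q))$. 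Everything else is routine once the constants are tracked to be $\varepsilon$-independent, which follows because the bounds in \textbf{A2}.--\textbf{A4}. and \textbf{A7}. are uniform in the fast variables $(y,\tau)=(x/\varepsilon,t/\varepsilon)$.
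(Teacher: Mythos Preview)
Your proposal is correct and is precisely the standard argument the paper has in mind: the paper does not supply a proof of this lemma at all, stating only that it ``can be obtained in a standard way.'' Your outline---It\^{o}'s formula for $|u_\varepsilon|^2$ in the Gelfand triple $V\subset H\subset V'$ (justified via the Krylov--Rozovskii/Pardoux framework, which the paper cites), coercivity from \textbf{C2}., linear growth from \textbf{A4}. and \textbf{A7}.(b), Burkholder--Davis--Gundy for the martingale term, Gronwall, and then the same scheme applied to $|u_\varepsilon|^4$---is exactly that standard proof, and your observation that the constants are $\varepsilon$-uniform because the structural bounds are uniform in $(y,\tau)$ is the one point specific to the homogenization setting.
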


The following result is very crucial for the proof of the tightness property
of $u_{\varepsilon }$.

\begin{lemma}
\label{l0.2}There exists a constant $C>0$ such that
\begin{equation*}
\mathbb{E}\sup_{\left\vert \theta \right\vert \leq \delta
}\int_{0}^{T}\left\vert u_{\varepsilon }(t+\theta )-u_{\varepsilon
}(t)\right\vert _{V^{\prime }}^{p^{\prime }}dt\leq C\delta ^{\frac{1}{p-1}},
\end{equation*}%
for any $\varepsilon $, and $\delta \in (0,1)$. Here we assume that $%
u_{\varepsilon }(t)$ has zero extension outside the interval $[0,T]$.
\end{lemma}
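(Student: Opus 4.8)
The plan is to read the weak formulation of \eqref{1.1} as a $V^{\prime }$-valued integral identity and to split the time increment of $u_{\varepsilon }$ into a drift contribution and a stochastic (martingale) contribution, estimating each separately. Writing $g=\Div a^{\varepsilon }(\cdot ,u_{\varepsilon },Du_{\varepsilon })-a_{0}^{\varepsilon }(\cdot ,u_{\varepsilon })$, we have in $V^{\prime }$
\[
u_{\varepsilon }(t+\theta )-u_{\varepsilon }(t)=\underbrace{\int_{t}^{t+\theta }g\,ds}_{=:I_{1}(t,\theta )}+\underbrace{\int_{t}^{t+\theta }M^{\varepsilon }(\cdot ,u_{\varepsilon })\,dW}_{=:I_{2}(t,\theta )},
\]
so that $|u_{\varepsilon }(t+\theta )-u_{\varepsilon }(t)|_{V^{\prime }}^{p^{\prime }}\le 2^{p^{\prime }-1}(|I_{1}|_{V^{\prime }}^{p^{\prime }}+|I_{2}|_{V^{\prime }}^{p^{\prime }})$ and it suffices to bound the two resulting terms. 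Throughout I use that $p\ge 2$ (so the Gelfand triple $V\hookrightarrow H\hookrightarrow V^{\prime }$ holds and $1/(p-1)\le 1$), the embedding $|\cdot |_{V^{\prime }}\le C|\cdot |_{H}$, and the a priori bounds of Lemma \ref{l0:1}.

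For the drift term I estimate pointwise in $t$ by Hölder's inequality in the $s$-variable:
\[
|I_{1}(t,\theta )|_{V^{\prime }}^{p^{\prime }}\le \left( \int_{t}^{t+\theta }|g|_{V^{\prime }}\,ds\right) ^{p^{\prime }}\le |\theta |^{p^{\prime }/p}\int_{t}^{t+\theta }|g|_{V^{\prime }}^{p^{\prime }}\,ds=|\theta |^{1/(p-1)}\int_{t}^{t+\theta }|g|_{V^{\prime }}^{p^{\prime }}\,ds,
\]
using $p^{\prime }/p=1/(p-1)$. Since this bound is monotone in $|\theta |$, the supremum over $|\theta |\le \delta $ costs nothing, and after integrating in $t$ and applying Fubini (which gives a harmless extra factor $|\theta |\le \delta <1$) I obtain
\[
\sup_{|\theta |\le \delta }\int_{0}^{T}|I_{1}(t,\theta )|_{V^{\prime }}^{p^{\prime }}\,dt\le \delta ^{1/(p-1)}\int_{-\delta }^{T+\delta }|g|_{V^{\prime }}^{p^{\prime }}\,ds.
\]
Taking expectations, \textbf{C3} gives $\Vert \Div a^{\varepsilon }\Vert _{V^{\prime }}^{p^{\prime }}\le c_{2}^{\prime }(1+|Du_{\varepsilon }|^{p})$ and \textbf{A4} together with $H\hookrightarrow V^{\prime }$ gives $|a_{0}^{\varepsilon }|_{V^{\prime }}^{p^{\prime }}\le C(1+|u_{\varepsilon }|)^{p^{\prime }}$, both of which have $\varepsilon $-uniformly bounded expected time-integral by \eqref{0:3}--\eqref{0:4} (here $p^{\prime }\le 4$ since $p\ge 2$), yielding $C\delta ^{1/(p-1)}$ for the drift part.

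The stochastic term is the main obstacle, since $I_{2}$ is not monotone in $\theta $ and the supremum over $\theta $ does not commute with the stochastic integral. The device is to push the supremum inside the time integral, using $\sup_{\theta }\int_{0}^{T}(\cdot )\le \int_{0}^{T}\sup_{\theta }(\cdot )$ and Tonelli, thereby reducing matters to controlling, for each fixed $t$, the martingale modulus $\mathbb{E}\sup_{|\theta |\le \delta }|N(t+\theta )-N(t)|_{H}^{p^{\prime }}$, where $N(r)=\int_{0}^{r}M^{\varepsilon }(\cdot ,u_{\varepsilon })\,dW$. For $\theta \ge 0$ this is directly the maximal inequality stated in Section 4 applied on $[t,t+\delta ]$; for $\theta <0$ I reduce to a forward increment via $\sup_{t-\delta \le r\le t}|N(r)-N(t)|_{H}\le 2\sup_{t-\delta \le r\le t}|N(r)-N(t-\delta )|_{H}$ and apply the same inequality on $[t-\delta ,t]$. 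In both cases \textbf{A7(b)} gives $\int |M^{\varepsilon }|_{L_{2}}^{2}\,ds\le C\delta (1+\sup_{s}|u_{\varepsilon }|_{H}^{2})$, whence
\[
\mathbb{E}\sup_{|\theta |\le \delta }|N(t+\theta )-N(t)|_{H}^{p^{\prime }}\le C\,\mathbb{E}\Big(\int_{t-\delta }^{t+\delta }|M^{\varepsilon }|_{L_{2}}^{2}\,ds\Big)^{p^{\prime }/2}\le C\delta ^{p^{\prime }/2}\,\mathbb{E}\big(1+\sup_{s}|u_{\varepsilon }|_{H}^{2}\big)^{p^{\prime }/2}\le C\delta ^{p^{\prime }/2},
\]
the last step by \eqref{0:3} (again $p^{\prime }\le 4$). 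Integrating in $t$ over $[0,T]$ gives $C\delta ^{p^{\prime }/2}$ for the stochastic part, and since $p^{\prime }/2=p/(2(p-1))\ge 1/(p-1)$ for $p\ge 2$ and $\delta <1$ we have $\delta ^{p^{\prime }/2}\le \delta ^{1/(p-1)}$. Finally, the contributions from $t$ near the endpoints, where $t+\theta $ leaves $[0,T]$ and $u_{\varepsilon }$ is taken to be zero, produce only terms of order $\delta $, which are likewise dominated by $\delta ^{1/(p-1)}$ because $1/(p-1)\le 1$. Adding the drift and stochastic estimates completes the proof.
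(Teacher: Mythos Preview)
Your proof is correct and follows essentially the same approach as the paper's: split the increment into drift and martingale parts, apply H\"older in time to the drift to extract the factor $\delta^{p'/p}=\delta^{1/(p-1)}$, and for the stochastic part push the supremum in $\theta$ inside the $t$-integral and apply the Burkh\"older--Davis--Gundy inequality for each fixed $t$, obtaining $\delta^{p'/2}\le \delta^{1/(p-1)}$. The only cosmetic differences are that the paper splits the drift into the $A^{\varepsilon}$ and $a_0^{\varepsilon}$ contributions separately and treats $\theta\ge 0$ first with a one-line remark for $\theta<0$, whereas you combine the drift terms, handle both signs of $\theta$ via the doubling trick on the martingale, and add an explicit endpoint argument that the paper omits.
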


\begin{proof}
Let us assume $\theta \geq 0$ (as we will see in what follows the same
argument will apply for $\theta <0$). We will denote by $p^{\prime }$ the H%
\"{o}lder conjugate of $p$ (i.e., $\frac{1}{p}+\frac{1}{p^{\prime }}=1$). It
is clear that
\begin{equation}
\begin{split}
\left\vert u_{\varepsilon }(t+\theta )-u_{\varepsilon }(t)\right\vert
_{V^{\prime }}^{p^{\prime }}& \leq C\left\vert \int_{t}^{t+\theta
}A^{\varepsilon }(\cdot ,u_{\varepsilon }(s),Du_{\varepsilon
}(s))ds\right\vert _{V^{\prime }}^{p^{\prime }}+C\left\vert
\int_{t}^{t+\theta }a_{0}^{\varepsilon }(\cdot ,u_{\varepsilon
}(s))ds\right\vert _{V^{\prime }}^{p^{\prime }} \\
& +C\left\vert \int_{0}^{t}M^{\varepsilon }(\cdot ,u_{\varepsilon
}(s))dW\right\vert _{V^{\prime }}^{p^{\prime }}. \\
& \leq I_{1}(t,\theta )+I_{2}(t,\theta )+I_{3}(t,\theta ).
\end{split}
\label{0:13}
\end{equation}%
It is not difficult to show that
\begin{equation*}
I_{1}(t,\theta )\leq \theta ^{\frac{p^{\prime }}{p}}\int_{t}^{t+\theta
}\left\vert A^{\varepsilon }(\cdot ,u_{\varepsilon }(s),Du_{\varepsilon
}(s))\right\vert ^{p^{\prime }}ds.
\end{equation*}%
Therefore
\begin{equation*}
\mathbb{E}\sup_{\theta \leq \delta }\int_{0}^{T}I_{1}(t,\theta )dt\leq
C\delta ^{\frac{p^{\prime }}{p}}\mathbb{E}\int_{0}^{T}\int_{t}^{t+\delta
}\left\vert Du_{\varepsilon }\right\vert ^{p}ds.
\end{equation*}%
Thanks to \eqref{0:4} we have that
\begin{equation}
\mathbb{E}\sup_{\theta \leq \delta }\int_{0}^{T}I_{1}(t,\theta )dt\leq
C\delta ^{\frac{p^{\prime }}{p}}.  \label{0:14}
\end{equation}%
Thanks to \textbf{A4.}, we have that
\begin{equation*}
I_{2}(t,\theta )\leq C\left( \int_{t}^{t+\theta }(1+\left\vert
u_{\varepsilon }(s)\right\vert )ds\right) ^{p^{\prime }},
\end{equation*}%
which implies that
\begin{equation*}
\mathbb{E}\sup_{\theta \leq \delta }\int_{0}^{T}I_{2}(t,\theta )dt\leq
\int_{0}^{T}\mathbb{E}\left( \delta +\int_{t}^{t+\theta }\left\vert
u_{\varepsilon }(s)\right\vert ds\right) ^{p^{\prime }}dt.
\end{equation*}%
We invoke from this that
\begin{equation*}
\mathbb{E}\sup_{\theta \leq \delta }\int_{0}^{T}I_{2}(t,\theta )dt\leq
C\left( \int_{0}^{T}\mathbb{E}\left( \delta +\int_{t}^{t+\delta }\left\vert
u_{\varepsilon }(s)\right\vert ds\right) ^{2}dt\right) ^{\frac{p^{\prime }}{2%
}}.
\end{equation*}%
Thanks to \eqref{0:3} we deduce from this that
\begin{equation}
\mathbb{E}\sup_{\theta \leq \delta }\int_{0}^{T}I_{2}(t,\theta )dt\leq
C\delta ^{p^{\prime }}.  \label{0:15}
\end{equation}%
Next, by using Burkh\"{o}lder-Davis-Gundy's inequality we see that
\begin{equation*}
\begin{split}
\mathbb{E}\sup_{0\leq \theta \leq \delta }\int_{0}^{T}I_{3}(t,\theta )dt&
\leq C\int_{0}^{T}\mathbb{E}\left( \int_{t}^{t+\delta }\left\vert
M^{\varepsilon }(\cdot ,u_{\varepsilon }(s))\right\vert ^{2}ds\right) ^{%
\frac{p^{\prime }}{2}}dt \\
& \leq C\left[ \int_{0}^{T}\mathbb{E}\left( \int_{t}^{t+\delta }\left\vert
M^{\varepsilon }(\cdot ,u_{\varepsilon }(s))\right\vert ^{2}ds\right) ^{2}dt%
\right] ^{\frac{p^{\prime }}{4}}
\end{split}%
\end{equation*}%
By condition \textbf{A7.},
\begin{equation*}
\mathbb{E}\sup_{0\leq \theta \leq \delta }\int_{0}^{T}I_{3}(t,\theta )dt\leq
C\left[ \int_{0}^{T}\delta ^{2}+\mathbb{E}\sup_{s\in \lbrack 0,T]}\left\vert
u_{\varepsilon }(s)\right\vert ^{4}dt\right] ^{\frac{p^{\prime }}{4}},
\end{equation*}%
which clearly implies that
\begin{equation}
\mathbb{E}\sup_{0\leq \theta \leq \delta }\int_{0}^{T}I_{3}(t,\theta )dt\leq
C\delta ^{\frac{p^{\prime }}{2}}.  \label{0:16}
\end{equation}%
Combining \eqref{0:14}, \eqref{0:15} and \eqref{0:16} we infer from %
\eqref{0:13} that
\begin{equation*}
\mathbb{E}\sup_{0\leq \theta \leq \delta }\int_{0}^{T}\left\vert
u_{\varepsilon }(t+\theta )-u_{\varepsilon }(t)\right\vert ^{p^{\prime
}}dt\leq C\delta ^{\frac{p^{\prime }}{p}},
\end{equation*}%
since $\frac{p^{\prime }}{p}\leq 1$. A same inequality holds for $\theta <0$%
. This ends the proof of the lemma.
\end{proof}

\subsection{Tightness property}

To prove the tightness of the law of $(u_\varepsilon, W)$ we will mainly
follow the idea in \cite{bensoussan2} and in \cite{Glatt}. Let us consider
the mappings:
\begin{equation*}
\begin{split}
\psi _{1}^{\varepsilon }& :\omega \in \Omega \mapsto u_{\varepsilon
}(\omega)\in L^{p}(0,T;H) \\
\psi _{2}^{\varepsilon }& :\omega \in \Omega \mapsto W(\omega)\in \mathcal{C}%
(0,T;\mathcal{U}_{0}).
\end{split}%
\end{equation*}%
We denote by $\mathfrak{S}_{1}=L^{p}(0,T,H)$ ( resp., $\mathfrak{S}_{2}=%
\mathcal{C}(0,T;\mathcal{U}_{0})$) and $\mathcal{B}(\mathfrak{S}_{1})$
(resp., $\mathcal{B}(\mathfrak{S}_{2})$) its Borel $\sigma $-algebra. The
mappings
\begin{equation*}
\Pi _{1}^{\varepsilon }(A)=\mathbb{P}\circ \psi _{1}^{\varepsilon }(A)\equiv
\mathbb{P}((\psi _{1}^{\varepsilon })^{-1}(A)),\ A\in \mathcal{B}(\mathfrak{S%
}_{1}),
\end{equation*}%
\begin{equation*}
\Pi _{2}^{\varepsilon }(A)=\mathbb{P}\circ \psi _{2}^{\varepsilon }(A)\equiv
\mathbb{P}((\psi _{2}^{\varepsilon })^{-1}(A)),\ A\in \mathcal{B}(\mathfrak{S%
}_{2}),
\end{equation*}%
and
\begin{equation*}
\Pi ^{\varepsilon }=\Pi _{1}^{\varepsilon }\times \Pi _{2}^{\varepsilon }
\end{equation*}%
define families of probability measures on $(\mathfrak{S}_{1},\mathcal{B}(%
\mathfrak{S}_{1}))$, $(\mathfrak{S}_{2},\mathcal{B}(\mathfrak{S}_{2}))$ and $%
(\mathfrak{S}=\mathfrak{S}_{1}\times \mathfrak{S}_{2},\mathcal{B}(\mathfrak{S%
}_{1}\times \mathfrak{S}_{2}))$, respectively.

\begin{lemma}
\label{l0.3}Let $\mu _{n},\nu _{n}$ be sequences of positive numbers such
that $\mu _{n},\nu _{n}\rightarrow 0$ as $n\rightarrow \infty $. The set
\begin{equation*}
Z=\left\{ z:\int_{0}^{T}\left\Vert Dz\right\Vert ^{p}dt\leq L\text{,}\
\left\vert z(t)\right\vert ^{2}\leq K\text{ a.e. }t\text{,}\
\sup_{\left\vert \theta \right\vert \leq \mu _{n}}\int_{0}^{T}\left\vert
z(t+\theta )-z(t)\right\vert _{V^{\prime }}^{p^{\prime }}\leq \nu
_{n}M\right\}
\end{equation*}%
is a compact subset of $L^{p}(0,T;H)$.
\end{lemma}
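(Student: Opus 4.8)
The plan is to recognise $Z$ as a set to which the Aubin--Lions--Simon compactness machinery applies, built on the triple $V\hookrightarrow H\hookrightarrow V'$: here $V=W_{0}^{1,p}(Q)$ embeds \emph{compactly} into $H=L^{2}(Q)$ by the Rellich--Kondrachov theorem (recall $Q$ is bounded), while the Gelfand identification of $H$ with its own dual gives a continuous embedding $H\hookrightarrow V'$. The three defining constraints of $Z$ translate into exactly the hypotheses of J.\ Simon's compactness criterion for $L^{p}(0,T;H)$. First, the bound $\int_{0}^{T}\|Dz\|^{p}\,dt\le L$ forces, through Poincar\'{e}'s inequality, a uniform bound $\int_{0}^{T}\|z\|^{p}\,dt\le C$, i.e.\ $Z$ is bounded in $L^{p}(0,T;V)$. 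Second, the family of constraints indexed by $n$, namely $\sup_{|\theta|\le\mu_{n}}\int_{0}^{T}|z(t+\theta)-z(t)|_{V'}^{p'}\,dt\le\nu_{n}M$ with $\mu_{n},\nu_{n}\to0$, is nothing but a uniform (over $Z$) modulus of continuity for the time translates, measured in $L^{p'}(0,T;V')$. I would first use these two facts to prove that $Z$ is \emph{relatively} compact, and then show separately that $Z$ is \emph{closed}, so that it is compact.

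For relative compactness I would verify Simon's two conditions. The mean-value condition is immediate: for $0\le t_{1}<t_{2}\le T$ the Bochner integral $\int_{t_{1}}^{t_{2}}z\,dt$ lies in $V$ and, by H\"{o}lder's inequality, $\|\int_{t_{1}}^{t_{2}}z\,dt\|\le\int_{t_{1}}^{t_{2}}\|z\|\,dt\le(t_{2}-t_{1})^{1/p'}(\int_{0}^{T}\|z\|^{p}\,dt)^{1/p}\le C$, so $\{\int_{t_{1}}^{t_{2}}z\,dt:z\in Z\}$ is bounded in $V$ and hence relatively compact in $H$ by the compact embedding $V\hookrightarrow H$. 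The translation condition is the heart of the matter, and I would establish it through Ehrling's inequality: since $V\hookrightarrow H\hookrightarrow V'$ with the first embedding compact, for every $\eta>0$ there is $C_{\eta}$ with $|v|\le\eta\|v\|+C_{\eta}|v|_{V'}$ for all $v\in V$. Applying this to $v=z(t+\theta)-z(t)$ and integrating the $p$-th power in $t$ gives
\begin{equation*}
\int_{0}^{T}|z(t+\theta)-z(t)|^{p}\,dt\le 2^{p-1}\eta^{p}\int_{0}^{T}\|z(t+\theta)-z(t)\|^{p}\,dt+2^{p-1}C_{\eta}^{p}\int_{0}^{T}|z(t+\theta)-z(t)|_{V'}^{p}\,dt.
\end{equation*}
The first integral is at most $2^{2p-1}\eta^{p}C$ by the uniform $L^{p}(0,T;V)$ bound, hence small once $\eta$ is small; for the second, the pointwise bound $|z(t)|\le\sqrt{K}$ (whence $|z(t)|_{V'}\le c\sqrt{K}$) combined with the constraint on $\int_{0}^{T}|z(t+\theta)-z(t)|_{V'}^{p'}\,dt$ lets me dominate $\int_{0}^{T}|z(t+\theta)-z(t)|_{V'}^{p}\,dt$ by a quantity that tends to $0$ as $\theta\to0$, uniformly over $Z$ (a H\"{o}lder/interpolation step bridges the exponents $p'$ and $p$). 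Choosing $\eta$ small and then $|\theta|$ small therefore makes $\|z(\cdot+\theta)-z\|_{L^{p}(0,T;H)}$ uniformly small, which is Simon's second hypothesis, and relative compactness of $Z$ in $L^{p}(0,T;H)$ follows.

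It remains to check that $Z$ is closed. Given $z_{k}\in Z$ with $z_{k}\to z$ in $L^{p}(0,T;H)$, I would pass to a subsequence along which $z_{k}(t)\to z(t)$ in $H$ for a.e.\ $t$; then the pointwise bound $|z(t)|^{2}\le K$ is inherited in the limit, the gradient bound $\int_{0}^{T}\|Dz\|^{p}\,dt\le L$ follows from weak lower semicontinuity of the norm on $L^{p}(0,T;V)$ (the sequence being bounded there, its weak limit is necessarily $z$), and each translation constraint survives by Fatou's lemma applied to $t\mapsto|z_{k}(t+\theta)-z_{k}(t)|_{V'}^{p'}$. Hence $z\in Z$, so $Z$ is closed; being closed and relatively compact in the complete space $L^{p}(0,T;H)$, it is compact. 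The one genuinely delicate point is the exponent/space bookkeeping in the translation estimate --- reconciling the $L^{p'}(0,T;V')$ control that the hypothesis supplies with the $L^{p}(0,T;H)$ smallness that Simon's criterion demands --- and this is precisely where the uniform pointwise bound $|z(t)|^{2}\le K$ enters.
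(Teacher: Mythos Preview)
Your argument is correct and is precisely the standard route: the paper does not give an independent proof but refers to \cite[Proposition 3.1]{bensoussan2}, whose proof is exactly the Aubin--Lions--Simon/Ehrling scheme you describe, including the use of the $L^{\infty}(0,T;H)$ bound to upgrade the $L^{p'}(0,T;V')$ translation control to the $L^{p}$ scale needed in the interpolation step. Your treatment of the exponent bookkeeping and of closedness is accurate, so nothing further is required.
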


\begin{proof}
The proof is the same as in \cite[Proposition 3.1]{bensoussan2}.
\end{proof}

The following result is of great importance for the rest of the work.

\begin{lemma}
\label{l0.4}The family $\Pi ^{\varepsilon }$ is tight on $\mathfrak{S}$.
\end{lemma}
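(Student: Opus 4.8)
The plan is to exploit the product structure $\Pi^{\varepsilon}=\Pi_{1}^{\varepsilon}\times \Pi_{2}^{\varepsilon}$ and establish tightness of each marginal separately. Indeed, a product of compact sets is compact, and for compact $K_{1}\subset \mathfrak{S}_{1}$ and $K_{2}\subset \mathfrak{S}_{2}$ one has $\Pi^{\varepsilon}((K_{1}\times K_{2})^{c})\leq \Pi_{1}^{\varepsilon}(K_{1}^{c})+\Pi_{2}^{\varepsilon}(K_{2}^{c})$, so it suffices to find, for each $\eta>0$, compact sets with marginal complement mass below $\eta/2$. Tightness of $\{\Pi_{2}^{\varepsilon}\}$ is immediate: the law of $W$ does not depend on $\varepsilon$, so $\Pi_{2}^{\varepsilon}$ is a single Radon probability measure on the Polish space $\mathcal{C}(0,T;\mathcal{U}_{0})$, hence tight.

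The substantive part is the tightness of $\{\Pi_{1}^{\varepsilon}\}$, for which the natural candidate compact sets are the sets $Z$ of Lemma \ref{l0.3}. Fix $\eta>0$; I would choose the parameters $L,K,M$ and the sequences $\mu_{n},\nu_{n}\to 0$ so that $\Pi_{1}^{\varepsilon}(Z^{c})\leq \eta$ for every $\varepsilon$, by bounding via Chebyshev's inequality the probability that $u_{\varepsilon}$ violates each of the three defining conditions of $Z$. For the first, using \eqref{0:4},
\[
\mathbb{P}\Big(\int_{0}^{T}\left\Vert Du_{\varepsilon}\right\Vert ^{p}dt>L\Big)\leq L^{-1}\,\mathbb{E}\int_{0}^{T}\left\Vert Du_{\varepsilon}\right\Vert ^{p}dt\leq CL^{-1};
\]
for the second, using \eqref{0:3},
\[
\mathbb{P}\Big(\sup_{t}\left\vert u_{\varepsilon}(t)\right\vert ^{2}>K\Big)\leq K^{-2}\,\mathbb{E}\sup_{t}\left\vert u_{\varepsilon}(t)\right\vert ^{4}\leq CK^{-2}.
\]
Each can be made $<\eta/3$ by taking $L$ and $K$ large.

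The delicate condition is the third, since it is an intersection over all $n$. Here I would invoke Lemma \ref{l0.2} together with Chebyshev to get
\[
\mathbb{P}\Big(\sup_{\left\vert \theta \right\vert \leq \mu_{n}}\int_{0}^{T}\left\vert u_{\varepsilon}(t+\theta)-u_{\varepsilon}(t)\right\vert _{V^{\prime}}^{p^{\prime}}dt>\nu_{n}M\Big)\leq \frac{C}{\nu_{n}M}\,\mu_{n}^{1/(p-1)}.
\]
Choosing the rates so that $\nu_{n}=\mu_{n}^{1/(2(p-1))}$ (for instance $\nu_{n}=2^{-n}$ and $\mu_{n}=2^{-2n(p-1)}$, both tending to $0$) makes the right-hand side equal to $\frac{C}{M}\nu_{n}$, so that summing over $n$ yields $\sum_{n}\mathbb{P}(\cdots)\leq \frac{C}{M}\sum_{n}\nu_{n}=\frac{C^{\prime}}{M}$, which is $<\eta/3$ once $M$ is large. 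Combining the three bounds gives $\Pi_{1}^{\varepsilon}(Z^{c})\leq \eta$ uniformly in $\varepsilon$, and since $Z$ is compact in $L^{p}(0,T;H)$ by Lemma \ref{l0.3}, tightness of $\{\Pi_{1}^{\varepsilon}\}$ follows.

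The main obstacle I anticipate is precisely this third condition: the uniform-in-$\varepsilon$ control over the countable family of time-moduli forces one to balance the rate $\mu_{n}\downarrow 0$ against $\nu_{n}\downarrow 0$ so that the Chebyshev series converges with a prefactor that can be absorbed by enlarging $M$. The Hölder exponent $1/(p-1)$ furnished by Lemma \ref{l0.2} is exactly what makes such a calibration possible.
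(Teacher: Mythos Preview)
Your proposal is correct and follows essentially the same route as the paper: reduce to the marginals via the product structure, dispose of $\Pi_{2}^{\varepsilon}$ as a single measure on a Polish space, and for $\Pi_{1}^{\varepsilon}$ use the compact sets $Z$ of Lemma~\ref{l0.3} together with Chebyshev on each defining condition, calibrating $\mu_{n},\nu_{n}$ so that the series $\sum \nu_{n}^{-1}\mu_{n}^{1/(p-1)}$ converges (the paper writes the exponent as $p'/p$, which equals $1/(p-1)$). Your bound $K^{-2}$ for the second condition, obtained from the fourth-moment estimate \eqref{0:3}, is even a bit sharper than the paper's $K^{-1}$, and your explicit choice $\nu_{n}=2^{-n}$, $\mu_{n}=2^{-2n(p-1)}$ is a concrete instance of what the paper leaves implicit.
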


\begin{proof}
Let $\delta >0$ and let $L_{\delta },K_{\delta },M_{\delta }$ positive
constants depending only on $\delta $ to be fixed later. It follows from
Lemma \ref{l0.3} that
\begin{equation*}
Z_{\delta }=\left\{ z:\int_{0}^{T}\left\vert Dz\right\vert ^{p}dt\leq
L_{\delta },\ \left\vert z(t)\right\vert ^{2}\leq K_{\delta }\text{ a.e. }%
t,\ \sup_{\left\vert \theta \right\vert \leq \mu _{n}}\int_{0}^{T}\left\vert
z(t+\theta )-z(t)\right\vert _{V^{\prime }}^{p^{\prime }}\leq \nu
_{n}M_{\delta }\right\}
\end{equation*}%
is a compact subset of $L^{p}(0,T;H)$ for any $\delta >0$. Here we choose
the sequence $\mu _{n},\nu _{n}$ so that $\sum \frac{1}{\nu _{n}}(\mu _{n})^{%
\frac{p^{\prime }}{p}}<\infty $. We have that
\begin{equation*}
\begin{split}
\mathbb{P}\left( u_{\varepsilon }\notin Z_{\delta }\right) & \leq \mathbb{P}%
\left( \int_{0}^{T}\left\vert Du_{\varepsilon }(s)\right\vert ^{p}ds\geq
L_{\delta }\right) +\mathbb{P}\left( \sup_{s\in \lbrack 0,T]}\left\vert
u_{\varepsilon }(s)\right\vert ^{2}\geq K_{\delta }\right) \\
& +\mathbb{P}\left( \sup_{\left\vert \theta \right\vert \leq \mu
_{n}}\int_{0}^{T}\left\vert u_{\varepsilon }(t+\theta )-u_{\varepsilon
}(t)\right\vert ^{p^{\prime }}dt\geq \nu _{n}M_{\delta }\right) .
\end{split}%
\end{equation*}%
Thanks to Tchebychev's inequality we have
\begin{equation*}
\begin{split}
\mathbb{P}(u_{\varepsilon }& \notin Z_{\delta })\leq \frac{1}{L_{\delta }}%
\mathbb{E}\int_{0}^{T}\left\vert Du_{\varepsilon }(s)\right\vert ^{p}ds+%
\frac{1}{K_{\delta }}\mathbb{E}\sup_{s\in \lbrack 0,T]}\left\vert
u_{\varepsilon }(s)\right\vert ^{2} \\
& +\sum \frac{1}{\nu _{n}M_{\delta }}\mathbb{E}\sup_{\left\vert \theta
\right\vert \leq \mu _{n}}\int_{0}^{T}\left\vert u_{\varepsilon }(t+\theta
)-u_{\varepsilon }(t)\right\vert ^{p^{\prime }}dt.
\end{split}%
\end{equation*}%
From Lemmata \ref{l0:1} and \ref{l0.2} it follows that
\begin{equation*}
\mathbb{P}(u_{\varepsilon }\notin Z_{\delta })\leq \frac{C}{L_{\delta }}+%
\frac{C}{K_{\delta }}+\frac{C}{M_{\delta }}\sum \frac{1}{\nu _{n}}(\mu
_{n})^{\frac{p^{\prime }}{p}}.
\end{equation*}%
By Choosing
\begin{equation*}
K_{\delta }=L_{\delta }=\frac{6C}{\delta }\text{ and }M_{\delta }=\frac{%
6C\left( \sum \frac{1}{\nu _{n}}(\mu _{n})^{\frac{p^{\prime }}{p}}\right) }{%
\delta },
\end{equation*}%
we have that
\begin{equation}
\mathbb{P}\left( u_{\varepsilon }\notin Z_{\delta }\right) \leq \frac{\delta
}{2}.  \label{0:17}
\end{equation}%
The sequence of probability measure $\Pi _{2}^{\varepsilon }=\mathbb{P}\circ
\psi _{2}^{\varepsilon }(A)=\mathbb{P}(W\in A)$ for any $A\in \mathcal{B}(%
\mathfrak{S}_{2})$ is constantly consisting of one element so it is weakly
compact. As $\mathcal{C}(0,T;\mathcal{U}_{0})$ is a Polish space we have
that a sequence of probability measures which is weakly compact is tight.
Therefore for any $\delta >0$ there exists a compact $\mathcal{K}_{\delta
}\subset \mathfrak{S}_{2}$ such that $\mathbb{P}(W\in \mathcal{K}_{\delta
})\geq 1-\frac{\delta }{2}$. It follows from this and \eqref{0:17} that
\begin{equation*}
\mathbb{P}\left( (u_{\varepsilon },W)\in Z_{\delta }\times \mathcal{K}%
_{\delta }\right) \geq 1-\delta .
\end{equation*}%
So we have found that for any $\delta >0$ there is a compact $Z_{\delta
}\times \mathcal{K}_{\delta }\subset \mathfrak{S}$ such that
\begin{equation*}
\Pi ^{\varepsilon }(Z_{\delta }\times \mathcal{K}_{\delta })\geq 1-\delta .
\end{equation*}%
This prove that the family $\Pi ^{\varepsilon }$ is tight on $\mathfrak{S}%
=L^{p}(0,T;H)\times \mathcal{C}(0,T;\mathcal{U}_{0})$.
\end{proof}

It follows from Lemma \ref{l0.4} and Prokhorov's theorem that there exists a
subsequence $\Pi ^{\varepsilon _{j}}$ of $\Pi ^{\varepsilon }$ converging
weakly (in the sense of measure) to a probability measure $\Pi $. It emerges
from Skorokhod's theorem that we can find a new probability space $(\bar{%
\Omega},\bar{\mathcal{F}},\bar{\mathbb{P}})$ and random variables $%
(u_{\varepsilon _{j}},W^{\varepsilon _{j}})$, $(u_{0},\bar{W})$ defined on
this new probability space and taking values in $\mathfrak{S}$ such that:
\begin{equation}
\text{The probability law of }({W}^{\varepsilon _{j}},u_{\varepsilon _{j}})%
\text{ is }\Pi ^{\varepsilon _{j}},  \label{0:18}
\end{equation}%
\begin{equation}
\text{The probability law of }(\bar{W},u_{0})\text{ is }\Pi ,\ \ \ \ \ \ \
\label{0:19}
\end{equation}%
\begin{equation}
{W}^{\varepsilon _{j}}\rightarrow \bar{W}\text{ in }\mathcal{C}(0,T;\mathcal{%
U}_{0})\,\,\bar{\mathbb{P}}\text{-a.s.,\ \ \ \ \ \ \ \ \ \ }  \label{0:20}
\end{equation}%
\begin{equation}
u_{\varepsilon _{j}}\rightarrow u_{0}\text{ in }L^{p}(0,T;H)\,\,\bar{\mathbb{%
P}}\text{-a.s..\ \ \ \ \ \ \ \ \ \ \ \ \ \ \ \ \ \ \ }  \label{0:21}
\end{equation}%
We can see that $\left\{ W^{\varepsilon _{j}}\right\} $ is a sequence of
cylindrical Brownian Motions evolving on $\mathcal{U}$. We let $\bar{%
\mathcal{F}}^{t}$ be the $\sigma $-algebra generated by $(\bar{W}%
(s),u_{0}(s)),0\leq s\leq t$ and the null sets of $\bar{\mathcal{F}}$. We
can show by arguing as in \cite{bensoussan2} that $\bar{W}$ is an $\bar{%
\mathcal{F}}^{t}$-adapted cylindrical Wiener process evolving in $\mathcal{U}
$. By the same argument as in \cite{bensoussan} we can show that for all $%
\phi \in V$ and for almost every $(\omega ,t)\in \bar{\Omega}\times \lbrack
0,T]$ the following holds true
\begin{equation}
\begin{split}
(u_{\varepsilon _{j}}(t),\phi )+\int_{0}^{t}a^{\varepsilon _{j}}(\cdot
,u_{\varepsilon _{j}}(s),Du_{\varepsilon _{j}})\cdot D\phi ds& =(u^{0},\phi
)-\int_{0}^{t}a_{0}^{\varepsilon _{j}}(\cdot ,u_{\varepsilon _{j}}(s))\phi ds
\\
& +\sum_{k=1}^{\infty }\int_{0}^{t}(M_{k}^{\varepsilon _{j}}(\cdot
,u_{\varepsilon _{j}}(s)),\phi )dW_{k}^{\varepsilon _{j}}.
\end{split}
\label{4.10}
\end{equation}

\section{Preliminary results}

In this section we collect some useful results that will be necessary in the
homogenization process. The notation is that of the preceding sections.
Before we can go further, let us however observe that property (\ref{3.1})
(in Definition \ref{d3.1}) still holds true for $f$ in $B(\Omega ;\mathcal{C}%
(\overline{Q}_{T};B_{A}^{p^{\prime },\infty }))$ where $B_{A}^{p^{\prime
},\infty }=B_{A}^{p^{\prime }}\cap L^{\infty }(\mathbb{R}_{y,\tau }^{N+1})$
and $p^{\prime }=p/(p-1)$.

With this in mind, the following assumption will be fundamental in the rest
of the paper:
\begin{equation}
\begin{array}{l}
a_{i}(x,t,\cdot ,\cdot ,\mu ,\lambda )\in B_{A}^{p^{\prime }}\text{ and }%
a_{0}(x,t,\cdot ,\cdot ,\mu ),\;M_{k}(\cdot ,\cdot ,\mu )\in B_{A}^{2} \\
\text{for any }(x,t)\in \overline{Q}_{T}\text{ and each }(\mu ,\lambda )\in
\mathbb{R}^{N+1},\ 1\leq i\leq N,\;k\geq 1%
\end{array}
\label{5.1}
\end{equation}%
where $p^{\prime }=p/(p-1)$ with $2\leq p<\infty $.

Arguing exactly as in \cite[Proposition 4.5]{NA} we have the following
result.

\begin{proposition}
\label{p5.1}Let $1\leq i\leq N$. Assume \emph{(\ref{5.1})} holds true. Then
for every $(\psi _{0},\Psi )\in (A)^{N+1}$ and every $(x,t)\in \overline{Q}%
_{T}$, the functions $(y,\tau )\mapsto a_{i}(x,t,y,\tau ,\psi _{0}(y,\tau
),\Psi (y,\tau ))$, $(y,\tau )\mapsto M_{k}(y,\tau ,\psi _{0}(y,\tau ))$ and
$(y,\tau )\mapsto a_{0}(x,t,y,\tau ,\psi _{0}(y,\tau ))$ denoted
respectively by $a_{i}(x,t,\cdot ,\cdot ,\psi _{0},\Psi )$, $M_{k}(\cdot
,\cdot ,\psi _{0})$ and $a_{0}(x,t,\cdot ,\cdot ,\psi _{0})$, lie
respectively in $B_{A}^{p^{\prime }}$, $B_{A}^{2}$ and $B_{A}^{2}$.
\end{proposition}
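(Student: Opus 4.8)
The plan is to treat each of the three functions as a superposition (Nemytskii-type) function and to reduce everything to a single principle: if, for each fixed parameter $\zeta$, the map $(y,\tau)\mapsto g(x,t,y,\tau,\zeta)$ belongs to a Besicovitch space $B_A^q$, and if $g$ is continuous in $\zeta$ uniformly with respect to $(y,\tau)$, then the substitution $\zeta=\Theta(y,\tau)$ with $\Theta\in(A)^m$ produces a function that still lies in $B_A^q$. Granting this, the three claims follow by taking $q=p'$ and $\Theta=(\psi_0,\Psi)$ for $a_i$ (the parameterwise membership being exactly \eqref{5.1}), and $q=2$ and $\Theta=\psi_0$ for $a_0$ and $M_k$. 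The three structural ingredients I shall invoke are: (i) since $A$ is a commutative $\mathcal{C}^*$-algebra containing the constants, it is stable under composition with continuous functions of bounded arguments, i.e. if $\Theta\in(A)^m$ has range in a compact set $K$ and $F\in\mathcal{C}(K)$ then $F\circ\Theta\in A$ (approximate $F$ uniformly on $K$ by polynomials via Stone--Weierstrass and use that $A$ is a closed algebra); (ii) $B_A^q$ is the closure of $A$ in the seminorm $\|\cdot\|_q$, hence complete, and $A\cdot B_A^q\subset B_A^q$, because for $\phi\in A$ and $g_n\in A$ with $\|g_n-g\|_q\to0$ one has $\phi g_n\in A$ and $\|\phi(g_n-g)\|_q\le\|\phi\|_\infty\|g_n-g\|_q\to0$; (iii) the parameterwise membership supplied by \eqref{5.1}.

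To build the approximants, I note that $\psi_0,\Psi\in A\subset BUC(\mathbb{R}^{N+1}_{y,\tau})$ are bounded, so their joint range is contained in a compact set $K\subset\mathbb{R}^{N+1}$. Fixing $\eta>0$, I would cover $K$ by finitely many balls of radius $\eta$ centred at points $\zeta_j=(\mu_j,\lambda_j)$ and choose a continuous partition of unity $(\chi_j)$ subordinate to this cover. Setting
\begin{equation*}
g_\eta(y,\tau)=\sum_j a_i(x,t,y,\tau,\mu_j,\lambda_j)\,\chi_j(\psi_0(y,\tau),\Psi(y,\tau)),
\end{equation*}
each factor $\chi_j(\psi_0,\Psi)$ lies in $A$ by (i), each $a_i(x,t,\cdot,\cdot,\mu_j,\lambda_j)$ lies in $B_A^{p'}$ by \eqref{5.1}, and so by the module property (ii) the finite sum $g_\eta$ belongs to $B_A^{p'}$.

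There remains the error estimate. Since $\sum_j\chi_j\equiv1$ on $K$ and each term is supported where $|(\psi_0,\Psi)-\zeta_j|<\eta$, writing
\begin{equation*}
a_i(x,t,\cdot,\cdot,\psi_0,\Psi)-g_\eta=\sum_j\bigl(a_i(x,t,\cdot,\cdot,\psi_0,\Psi)-a_i(x,t,\cdot,\cdot,\mu_j,\lambda_j)\bigr)\chi_j(\psi_0,\Psi)
\end{equation*}
shows that this difference is dominated pointwise by
\begin{equation*}
\omega(\eta)=\sup\bigl\{|a_i(x,t,y,\tau,\zeta)-a_i(x,t,y,\tau,\zeta')|:(y,\tau)\in\mathbb{R}^{N+1},\ \zeta,\zeta'\in K,\ |\zeta-\zeta'|\le\eta\bigr\}.
\end{equation*}
As $\|v\|_{p'}\le\sup|v|$ for bounded $v$, this yields $\|a_i(x,t,\cdot,\cdot,\psi_0,\Psi)-g_\eta\|_{p'}\le\omega(\eta)$, and the completeness of $B_A^{p'}$ forces $a_i(x,t,\cdot,\cdot,\psi_0,\Psi)\in B_A^{p'}$ once $\omega(\eta)\to0$. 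The assertions for $a_0$ and $M_k$ follow from the identical scheme with $q=2$ and a single composed variable, the modulus now being produced directly by the Lipschitz bounds \textbf{A5} and \textbf{A7(a)} (so that indeed $\omega(\eta)\le C\eta$).

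The step I expect to be the crux is the claim $\omega(\eta)\to0$, i.e. the continuity of $a_i(x,t,y,\tau,\cdot,\cdot)$ on $K$ uniformly in $(y,\tau)$. The modulus in \textbf{A6(b)} couples the genuine increment in $(\mu,\lambda)$ with growth powers of the arguments, so the argument hinges on using the boundedness of $\psi_0$ and $\Psi$ to confine all those arguments to $K$, thereby absorbing the growth terms into constants and extracting a genuine modulus $\omega(\eta)\to0$ from \textbf{A3} and \textbf{A6(b)}. The two remaining ingredients, the stability of $A$ under continuous composition and the module inclusion $A\cdot B_A^{q}\subset B_A^{q}$, are routine but must be invoked explicitly to guarantee that the approximants $g_\eta$ genuinely live in the Besicovitch space.
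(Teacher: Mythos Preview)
Your argument is correct and follows the standard Nemytskii-type substitution scheme that the cited reference \cite[Proposition~4.5]{NA} presumably uses as well: freeze the argument $(\mu,\lambda)$ at finitely many values via a partition of unity on the compact range of $(\psi_0,\Psi)$, use the module inclusion $A\cdot B_A^q\subset B_A^q$ to place the approximants in the Besicovitch space, and control the remainder by a uniform-in-$(y,\tau)$ modulus of continuity coming from \textbf{A6(b)}, \textbf{A5}, \textbf{A7(a)}. The paper itself gives no proof here, only the cross-reference, so your write-up actually supplies more than the paper does.

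Two small remarks. First, in your discussion of $\omega(\eta)\to0$ for $a_i$, the bound \textbf{A3} plays no role beyond ensuring $\omega(\eta)<\infty$; the decay to zero comes entirely from \textbf{A6(b)}, which on the compact range $K$ reads $|a_i(\cdot,\cdot,\zeta)-a_i(\cdot,\cdot,\zeta')|\le C_K\,m(|\mu-\mu'|)+C_K|\lambda-\lambda'|$ uniformly in $(y,\tau)$. Second, for $M_k$ the hypothesis \textbf{A7(a)} is stated for $M$ as an $L_2(\mathcal{U},H)$-valued map of $u\in H$, not directly as a pointwise Lipschitz bound in a scalar variable $\mu$; you are implicitly reading $M_k(y,\tau,\cdot)$ as a Nemytskii operator generated by a scalar function, which is the intended interpretation (cf.\ the example following \textbf{A7} in the paper), but it is worth making that identification explicit.
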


Now, let $(\psi _{0},\Psi )\in B(\Omega ;\mathcal{C}(\overline{Q}%
_{T};(A)^{N+1}))$. Assuming (\ref{5.1}), it can be easily shown that the
function $(x,t,y,\tau ,\omega )\mapsto a_{i}(x,t,y,\tau ,\psi
_{0}(x,t,y,\tau ,\omega ),\Psi (x,t,y,\tau ,\omega ))$, denoted by $%
a_{i}(\cdot ,\psi _{0},\Psi )$, lies in $B(\Omega ;\mathcal{C}(\overline{Q}%
_{T};B_{A}^{p^{\prime },\infty }))$ (use also Proposition \ref{p5.1}). We
can then define its trace
\begin{equation*}
(x,t,\omega )\mapsto a_{i}(x,t,x/\varepsilon ,t/\varepsilon ,\psi
_{0}(x,t,x/\varepsilon ,t/\varepsilon ,\omega ),\Psi (x,t,x/\varepsilon
,t/\varepsilon ,\omega )),
\end{equation*}%
from $Q_{T}\times \Omega $\ into $\mathbb{R}$, as an element of $L^{\infty
}(Q_{T}\times \Omega )$, which we denote by $a_{i}^{\varepsilon }(\cdot
,\psi _{0}^{\varepsilon },\Psi ^{\varepsilon })$. Likewise we can define the
functions $a_{0}(\cdot ,\psi _{0})$ and $a_{0}^{\varepsilon }(\cdot ,\psi
_{0}^{\varepsilon })$, $M_{k}(\cdot ,\psi _{0})$ and $M_{k}^{\varepsilon
}(\cdot ,\psi _{0}^{\varepsilon })$.

The next result allows us to rigorously set the homogenized problem.

\begin{proposition}
\label{p5.2}Let $2\leq p<\infty $ and let $1\leq i\leq N$. Assume \emph{(\ref%
{5.1})} holds. For any $(\psi _{0},\Psi )\in B(\Omega ;\mathcal{C}(\overline{%
Q}_{T};(A)^{N+1}))$ we have
\begin{equation}
a_{i}^{\varepsilon }(\cdot ,\psi _{0}^{\varepsilon },\Psi ^{\varepsilon
})\rightarrow a_{i}(\cdot ,\psi _{0},\Psi )\text{ in }L^{p^{\prime
}}(Q_{T}\times \Omega )\text{-weak }\Sigma \text{ as }\varepsilon
\rightarrow 0.  \label{5.2}
\end{equation}%
Let $a(\cdot ,\psi _{0},\Psi )=(a_{i}(\cdot ,\psi _{0},\Psi ))_{1\leq i\leq
N}$. The mapping $(\psi _{0},\Psi )\mapsto a(\cdot ,\psi _{0},\Psi )$ of $%
B(\Omega ;\mathcal{C}(\overline{Q}_{T};(A)^{N+1}))$ into $L^{p^{\prime
}}(Q_{T}\times \Omega ;B_{A}^{p^{\prime }})^{N}$ extends by continuity to a
unique mapping still denoted by $a$, of $L^{p}(Q_{T}\times \Omega
;(B_{A}^{p})^{N+1})$ into $L^{p^{\prime }}(Q_{T}\times \Omega
;B_{A}^{p^{\prime }})^{N}$ such that
\begin{equation*}
(a(\cdot ,u,\mathbf{v})-a(\cdot ,u,\mathbf{w}))\cdot (\mathbf{v}-\mathbf{w}%
)\geq c_{1}\left| \mathbf{v}-\mathbf{w}\right| ^{p}\text{\ a.e. in }%
Q_{T}\times \Omega \times \mathbb{R}_{y}^{N}\times \mathbb{R}_{\tau }
\end{equation*}%
\begin{equation*}
\left\| a_{i}(\cdot ,u,\mathbf{v})\right\| _{L^{p^{\prime }}(Q_{T}\times
\Omega ;B_{A}^{p^{\prime }})}\leq c_{2}^{\prime \prime }\left( 1+\left\|
u\right\| _{L^{p}(Q_{T}\times \Omega ;B_{A}^{p})}^{p-1}+\left\| \mathbf{v}%
\right\| _{L^{p}(Q_{T}\times \Omega ;(B_{A}^{p})^{N})}^{p-1}\right)
\end{equation*}%
\begin{equation}
\begin{array}{l}
\left\| a_{i}(\cdot ,u,\mathbf{v})-a_{i}(\cdot ,u,\mathbf{w})\right\|
_{L^{p^{\prime }}(Q_{T}\times \Omega ;B_{A}^{p^{\prime }})} \\
\;\;\;\;\;\leq c_{0}\left\| 1+\left| u\right| +\left| \mathbf{v}\right|
+\left| \mathbf{w}\right| \right\| _{L^{p}(Q_{T}\times \Omega
;B_{A}^{p})}^{p-2}\left\| \mathbf{v}-\mathbf{w}\right\| _{L^{p}(Q_{T}\times
\Omega ;(B_{A}^{p})^{N})}%
\end{array}
\label{5.3}
\end{equation}%
\begin{equation*}
\begin{array}{l}
\left| a_{i}(x,t,y,\tau ,u,\mathbf{w})-a_{i}(x^{\prime },t^{\prime },y,\tau
,v,\mathbf{w})\right| \leq \\
\;\;\leq m(\left| x-x^{\prime }\right| +\left| t-t^{\prime }\right| +\left|
u-v\right| )\left( 1+\left| u\right| ^{p-1}+\left| v\right| ^{p-1}+\left|
\mathbf{w}\right| ^{p-1}\right) \\
\text{a.e. in }Q_{T}\times \Omega \times \mathbb{R}_{y}^{N}\times \mathbb{R}%
_{\tau }%
\end{array}%
\end{equation*}%
for all $u,v\in L^{p}(Q_{T}\times \Omega ;B_{A}^{p})$, $\mathbf{v},\mathbf{w}%
\in L^{p}(Q_{T}\times \Omega ;(B_{A}^{p})^{N})$ and all $(x,t),(x^{\prime
},t^{\prime })\in Q_{T}$, where the constant $c_{2}^{\prime \prime }$
depends only on $c_{2}$ and on $Q_{T}$.
\end{proposition}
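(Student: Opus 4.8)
The plan is to treat the two assertions of the proposition separately: first the weak $\Sigma $-convergence \eqref{5.2}, which is essentially a direct consequence of the convergence property already recorded at the beginning of this section, and then the continuous extension together with its four structural estimates, where the real work lies. For \eqref{5.2}, the point is that property \eqref{3.1} persists for test functions in $B(\Omega ;\mathcal{C}(\overline{Q}_{T};B_{A}^{p^{\prime },\infty }))$. Since $(\psi _{0},\Psi )\in B(\Omega ;\mathcal{C}(\overline{Q}_{T};(A)^{N+1}))$, Proposition \ref{p5.1} guarantees that $a_{i}(\cdot ,\psi _{0},\Psi )$ lies in $B(\Omega ;\mathcal{C}(\overline{Q}_{T};B_{A}^{p^{\prime },\infty }))$, so its trace $a_{i}^{\varepsilon }(\cdot ,\psi _{0}^{\varepsilon },\Psi ^{\varepsilon })$ is admissible. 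I would then invoke the generalized form of Remark \ref{r3.0}, by which such a function weakly $\Sigma $-converges to $\varrho $ of itself; under the identification $\mathcal{B}_{A}^{p^{\prime }}\cong L^{p^{\prime }}(\Delta (A))$ this is exactly \eqref{5.2}.

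For the extension, I would first verify all four estimates on the dense subspace $B(\Omega ;\mathcal{C}(\overline{Q}_{T};(A)^{N+1}))$, exploiting that on $B_{A}^{p}$ one has $\left\Vert w\right\Vert _{p}^{p}=M(\left\vert w\right\vert ^{p})$ with $M$ positive and translation invariant. Fixing $(x,t,\omega )$ and feeding the $A$-valued profiles into the pointwise hypotheses, \textbf{A2} together with positivity of $M$ yields the monotonicity inequality after taking mean values; \textbf{A3} gives the growth bound, the crucial algebraic fact being $p^{\prime }(p-1)=p$, so that $M[(1+\left\vert u\right\vert ^{p-1}+\left\vert \mathbf{v}\right\vert ^{p-1})^{p^{\prime }}]\leq C(1+\left\Vert u\right\Vert _{p}^{p}+\left\Vert \mathbf{v}\right\Vert _{p}^{p})$; and \textbf{A6}(a) supplies the continuity in $(x,t)$.

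The estimate \eqref{5.3} is the delicate one. Starting from the last term of \textbf{A6}(b), namely $\left\vert a(\cdot ,u,\mathbf{v})-a(\cdot ,u,\mathbf{w})\right\vert \leq C(1+\left\vert u\right\vert +\left\vert \mathbf{v}\right\vert +\left\vert \mathbf{w}\right\vert )^{p-2}\left\vert \mathbf{v}-\mathbf{w}\right\vert $, I would apply Hölder's inequality for the mean value $M$ with conjugate exponents $r=(p-1)/(p-2)$ and $r^{\prime }=p-1$, chosen precisely so that $\left\vert \mathbf{v}-\mathbf{w}\right\vert ^{p^{\prime }r^{\prime }}=\left\vert \mathbf{v}-\mathbf{w}\right\vert ^{p}$ and $(1+\cdots )^{(p-2)p^{\prime }r}=(1+\cdots )^{p}$; raising the outcome to the power $1/p^{\prime }$ reproduces the exponents $p-2$ and $1$ in \eqref{5.3}, and a second application of Hölder in the variables $(x,t,\omega )$ with the same exponents upgrades this to the stated $L^{p}(Q_{T}\times \Omega )$ form.

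Finally I would extend $a$ by continuity. Since $A$ is dense in $B_{A}^{p}$ and $B(\Omega )\otimes \mathcal{C}(\overline{Q}_{T})\otimes A$ is dense in $L^{p}(Q_{T}\times \Omega ;B_{A}^{p})$, the subspace $B(\Omega ;\mathcal{C}(\overline{Q}_{T};(A)^{N+1}))$ is dense in $L^{p}(Q_{T}\times \Omega ;(B_{A}^{p})^{N+1})$. The bound \eqref{5.3} together with the growth estimate shows that $a$ is uniformly continuous on bounded subsets, hence extends uniquely and compatibly to the whole space, and the two norm inequalities survive the passage to the limit by continuity of the norms. For the two pointwise inequalities (monotonicity and the $(x,t)$-continuity bound), I would take approximants from the dense subspace converging in $L^{p}(Q_{T}\times \Omega ;(B_{A}^{p})^{N+1})$, pass to the isometric realization $\mathcal{B}_{A}^{p}\cong L^{p}(\Delta (A))$ to extract subsequences converging a.e.\ on $Q_{T}\times \Omega \times \Delta (A)$, and let the inequalities pass to the limit pointwise. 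The main obstacle is exactly this last step, whose analogue is handled in \cite[Proposition 4.5]{NA}: the Hölder bookkeeping in \eqref{5.3} must be exact, and one must arrange the a.e.-convergent subsequences simultaneously for all arguments so that the genuine pointwise monotonicity, rather than merely its integrated consequence, is preserved in the limit.
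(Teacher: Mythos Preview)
Your argument for \eqref{5.2} is exactly the paper's: $a_{i}(\cdot ,\psi _{0},\Psi )\in B(\Omega ;\mathcal{C}(\overline{Q}_{T};B_{A}^{p^{\prime },\infty }))$ and property \eqref{3.1} persists for such test functions. For the extension and the four structural estimates the paper does not give a self-contained argument at all but simply defers to \cite[Proposition 3.1]{AMPA}; what you have written is essentially a correct unpacking of that cited result, with the right H\"older exponents $r=(p-1)/(p-2)$, $r'=p-1$ for \eqref{5.3} and the right density/a.e.-subsequence scheme for passing the pointwise inequalities to the limit. One small slip: the continuity-in-$(x,t,u)$ estimate for $a_{i}$ comes from \textbf{A6}(b) (with $\mathbf{v}=\mathbf{v}'=\mathbf{w}$), not from \textbf{A6}(a), which concerns $a_{0}$.
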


\begin{proof}
As discussed above before the statement of the proposition, we know that the
function $a_{i}(\cdot ,\psi _{0},\Psi )$ lies in $B(\Omega ;\mathcal{C}(%
\overline{Q}_{T};B_{A}^{p^{\prime },\infty }))$. Since Property (\ref{3.1})
(in Definition \ref{d3.1}) still holds for $f\in B(\Omega ;\mathcal{C}(%
\overline{Q}_{T};B_{A}^{p^{\prime },\infty }))$ the convergence result (\ref%
{5.2}) follows at once. Besides, it is immediate from the definition of the
function $a_{i}(\cdot ,\psi _{0},\Psi )$ (for $(\psi _{0},\Psi )\in B(\Omega
;\mathcal{C}(\overline{Q}_{T};(A)^{N+1}))$) and from some obvious arguments
that the remainder of the proposition follows from \cite[Proposition 3.1]%
{AMPA}.
\end{proof}

It emerges from the preceding proposition, the following important corollary.

\begin{corollary}
\label{c5.1}\emph{(1)} Let $(u_{\varepsilon })_{\varepsilon \in E}$ be a
sequence in $L^{2}(Q_{T}\times \Omega )$ such that $u_{\varepsilon
}\rightarrow u_{0}$ in $L^{2}(Q_{T}\times \Omega )$ as $E\ni \varepsilon
\rightarrow 0$, where $u_{0}\in L^{p}(Q_{T}\times \Omega )$. Let $\Psi \in
B(\Omega ;\mathcal{C}(\overline{Q}_{T};(A)^{N}))$, and finally let $1\leq
i\leq N$. Then, as $E\ni \varepsilon \rightarrow 0$,
\begin{equation*}
a_{i}^{\varepsilon }(\cdot ,u_{\varepsilon },\Psi ^{\varepsilon
})\rightarrow a_{i}(\cdot ,u_{0},\Psi )\text{\ in }L^{p^{\prime
}}(Q_{T}\times \Omega )\text{-weak }\Sigma \text{.}\;\;\;\;\;\;\;\;\;\;\;\;%
\;\;\;\;
\end{equation*}%
\emph{(2)} Let $\psi _{0}\in B(\Omega )\otimes \mathcal{C}_{0}^{\infty
}(Q_{T})$ and $\psi _{1}\in B(\Omega )\otimes \mathcal{C}_{0}^{\infty
}(Q_{T})\otimes A^{\infty }$. For $\varepsilon >0$, let
\begin{equation}
\Phi _{\varepsilon }=\psi _{0}+\varepsilon \psi _{1}^{\varepsilon
},\;\;\;\;\;\;\;\;\;\;\;\;\;\;  \label{5.4}
\end{equation}%
i.e., $\Phi _{\varepsilon }(x,t,\omega )=\psi _{0}(x,t,\omega )+\varepsilon
\psi _{1}(x,t,x/\varepsilon ,t/\varepsilon ,\omega )$\ for $(x,t,\omega )\in
Q_{T}\times \Omega $. Let $(u_{\varepsilon })_{\varepsilon \in E}$ be a
sequence in $L^{2}(Q_{T}\times \Omega )$ such that $u_{\varepsilon
}\rightarrow u_{0}$ in $L^{2}(Q_{T}\times \Omega )$ as $E\ni \varepsilon
\rightarrow 0$ where $u_{0}\in L^{2}(Q_{T}\times \Omega )$. Then, as $E\ni
\varepsilon \rightarrow 0$, one has
\begin{equation*}
(i)\;a_{i}^{\varepsilon }(\cdot ,u_{\varepsilon },D\Phi _{\varepsilon
})\rightarrow a_{i}(\cdot ,u_{0},D\psi _{0}+D_{y}\psi _{1})\text{ in }%
L^{p^{\prime }}(Q_{T}\times \Omega )\text{-weak }\Sigma \text{.}
\end{equation*}%
Moreover, if $(v_{\varepsilon })_{\varepsilon \in E}$ is a sequence in $%
L^{p}(Q_{T}\times \Omega )$ such that $v_{\varepsilon }\rightarrow v_{0}$ in
$L^{p}(Q_{T}\times \Omega )$-weak $\Sigma $ as $E\ni \varepsilon \rightarrow
0$ where $v_{0}\in L^{p}(Q_{T}\times \Omega ;\mathcal{B}_{A}^{p})$, then, as
$E\ni \varepsilon \rightarrow 0$,
\begin{equation*}
(ii)\;\int_{Q_{T}\times \Omega }a_{i}^{\varepsilon }(\cdot ,u_{\varepsilon
},D\Phi _{\varepsilon })v_{\varepsilon }dxdtd\mathbb{P}\rightarrow
\iint_{Q_{T}\times \Omega \times \Delta (A)}\widehat{a}_{i}(\cdot
,u_{0},D\psi _{0}+\partial \widehat{\psi }_{1})\widehat{v}_{0}dxdtd\mathbb{P}%
d\beta \text{.}
\end{equation*}
\end{corollary}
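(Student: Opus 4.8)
The plan is to deduce both parts from Proposition \ref{p5.2} by freezing the macroscopic argument $u_\varepsilon$ at a smooth approximation and controlling the resulting error through the continuity--modulus estimate, the $\Sigma$-convergence being tested only against admissible functions $f\in B(\Omega;\mathcal{C}(\overline{Q}_T;B_A^{p',\infty}))$, for which \eqref{3.1} is known to hold. For part (1) I would choose $\phi\in B(\Omega)\otimes\mathcal{C}_0^\infty(Q_T)$ close to $u_0$ in $L^p(Q_T\times\Omega)$ (so that $(\phi,\Psi)\in B(\Omega;\mathcal{C}(\overline{Q}_T;(A)^{N+1}))$) and split
\[
\int_{Q_{T}\times\Omega}a_i^\varepsilon(\cdot,u_\varepsilon,\Psi^\varepsilon)f^\varepsilon\,dx\,dt\,d\mathbb{P}=\int_{Q_{T}\times\Omega}a_i^\varepsilon(\cdot,\phi,\Psi^\varepsilon)f^\varepsilon\,dx\,dt\,d\mathbb{P}+\int_{Q_{T}\times\Omega}\bigl(a_i^\varepsilon(\cdot,u_\varepsilon,\Psi^\varepsilon)-a_i^\varepsilon(\cdot,\phi,\Psi^\varepsilon)\bigr)f^\varepsilon\,dx\,dt\,d\mathbb{P}.
\]
The first integral tends to $\iint\widehat{a}_i(\cdot,\phi,\Psi)\widehat f$ by \eqref{5.2}; the second is bounded, via the continuity--modulus inequality of Proposition \ref{p5.2} together with the $(p-1)$-growth of $a_i$, by $C\Vert m(\vert u_\varepsilon-\phi\vert)(1+\vert u_\varepsilon\vert^{p-1}+\vert\phi\vert^{p-1}+\vert\Psi^\varepsilon\vert^{p-1})\Vert_{L^{p'}}$. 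Letting first $\varepsilon\to0$ and then $\phi\to u_0$ (using continuity of $\phi\mapsto a_i(\cdot,\phi,\Psi)$ from Proposition \ref{p5.2}) yields (1).

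Part (2)(i) reduces to (1). Writing $D\Phi_\varepsilon=D\psi_0+(D_y\psi_1)^\varepsilon+\varepsilon(D_x\psi_1)^\varepsilon$ and setting $\Psi:=D\psi_0+D_y\psi_1\in B(\Omega;\mathcal{C}(\overline{Q}_T;(A)^N))$, the corrector term $\varepsilon(D_x\psi_1)^\varepsilon$ tends to $0$ in $L^\infty(Q_T\times\Omega)$. I would apply (1) to $a_i^\varepsilon(\cdot,u_\varepsilon,\Psi^\varepsilon)$ and absorb the difference $a_i^\varepsilon(\cdot,u_\varepsilon,D\Phi_\varepsilon)-a_i^\varepsilon(\cdot,u_\varepsilon,\Psi^\varepsilon)$ using the gradient--continuity estimate \eqref{5.3}: its pointwise bound carries the factor $\vert D\Phi_\varepsilon-\Psi^\varepsilon\vert\le C\varepsilon$ against a $(p-2)$-power bounded in $L^{(p-2)p'}(Q_T\times\Omega)$ (since $(p-2)p'\le p$), hence the difference vanishes in $L^{p'}$-norm, giving (i).

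For part (2)(ii) the obstacle is that it pairs two sequences, $a_i^\varepsilon(\cdot,u_\varepsilon,D\Phi_\varepsilon)$ and $v_\varepsilon$, each only weakly $\Sigma$-convergent, so one cannot simply multiply the limits. The resolution I would use is again to freeze $u_\varepsilon$: for smooth $\phi\approx u_0$, the fixed function $g:=a_i(\cdot,\phi,\Psi)$ lies in $B(\Omega;\mathcal{C}(\overline{Q}_T;B_A^{p',\infty}))$, and $a_i^\varepsilon(\cdot,\phi,D\Phi_\varepsilon)=g^\varepsilon+r_\varepsilon$ with $\Vert r_\varepsilon\Vert_{L^{p'}}\to0$ (again by \eqref{5.3} and $\vert D\Phi_\varepsilon-\Psi^\varepsilon\vert\le C\varepsilon$). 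Since $g$ is an admissible test function, $\int_{Q_T\times\Omega}g^\varepsilon v_\varepsilon\to\iint\widehat g\,\widehat v_0$ is exactly the weak $\Sigma$-convergence of $v_\varepsilon$ against $g$ through \eqref{3.1}, where $\widehat g=\widehat{a}_i(\cdot,\phi,D\psi_0+\partial\widehat\psi_1)$. The remaining term $\int(a_i^\varepsilon(\cdot,u_\varepsilon,D\Phi_\varepsilon)-a_i^\varepsilon(\cdot,\phi,D\Phi_\varepsilon))v_\varepsilon$ is controlled in modulus by $\Vert a_i^\varepsilon(\cdot,u_\varepsilon,D\Phi_\varepsilon)-a_i^\varepsilon(\cdot,\phi,D\Phi_\varepsilon)\Vert_{L^{p'}}\Vert v_\varepsilon\Vert_{L^p}$, which is uniformly small by the same modulus argument as in (1) (here $\Vert v_\varepsilon\Vert_{L^p}$ is bounded, being weakly $\Sigma$-convergent). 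Sending $\phi\to u_0$ and using continuity of $\phi\mapsto a_i(\cdot,\phi,\cdot)$ to pass to the limit in $\iint\widehat g\,\widehat v_0$ finishes (ii).

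I expect the genuine technical core to be the superposition (Nemytskii) continuity estimate that makes $\Vert m(\vert u_\varepsilon-\phi\vert)(1+\vert u_\varepsilon\vert^{p-1}+\cdots)\Vert_{L^{p'}}$ uniformly small. The way to run it is to exploit that $m$ is bounded by $1$ with $m(0)=0$, to split $Q_T\times\Omega$ according to whether $\vert u_\varepsilon-\phi\vert$ lies below or above a threshold $\eta$, and on the exceptional set to combine Chebyshev's inequality with the equi-integrability of $\{\vert u_\varepsilon\vert^p\}$ (furnished by the uniform $L^p$-bound on $(u_\varepsilon)$, which also guarantees the $L^{p'}$-boundedness needed for the weak $\Sigma$-limits to make sense) and with the a.e. convergence extracted from the strong $L^2$-convergence. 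Keeping the iterated limits in the correct order, $\varepsilon\to0$ before $\phi\to u_0$, is what legitimizes the double approximation.
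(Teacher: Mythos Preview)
Your approach is correct and matches the paper's strategy: reduce to Proposition~\ref{p5.2} by freezing the macroscopic argument at a smooth approximation and control the remainder through the modulus-of-continuity bound, with part~(2) then following from part~(1). The paper's only structural difference is a three-term split passing through $u_0$ first---so that $m(|u_\varepsilon-u_0|)\to0$ a.e.\ is handled by Egorov's theorem---before approximating $u_0$ by smooth $\psi_j$, whereas you go directly from $u_\varepsilon$ to $\phi$ and run the threshold/equi-integrability argument; both routes deliver the same iterated limits, and your treatment of~(2)(ii) is in fact more explicit than the paper's (which simply declares it a ``mere consequence'').
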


\begin{proof}
We just sketch the proof since it is very similar to the one of \cite[%
Corollaries 4.7-4.8]{NA}. For part (1), let $f\in L^{p}(Q_{T}\times \Omega
;A)$, and let $(\psi _{j})_{j}$ be a sequence in $B(\Omega )\otimes \mathcal{%
C}_{0}^{\infty }(Q_{T})$ such that $\psi _{j}\rightarrow u_{0}$ in $%
L^{p}(Q_{T}\times \Omega )$ as $j\rightarrow \infty $. We have
\begin{equation*}
\begin{array}{l}
\int_{Q_{T}\times \Omega }a_{i}^{\varepsilon }(\cdot ,u_{\varepsilon },\Psi
^{\varepsilon })f^{\varepsilon }dxdtd\mathbb{P}-\iint_{Q_{T}\times \Omega
\times \Delta (A)}\widehat{a}_{i}(\cdot ,u_{0},\widehat{\Psi })\widehat{f}%
dxdtd\mathbb{P}d\beta \\
\ \ =\int_{Q_{T}\times \Omega }\left[ a_{i}^{\varepsilon }(\cdot
,u_{\varepsilon },\Psi ^{\varepsilon })-a_{i}^{\varepsilon }(\cdot
,u_{0},\Psi ^{\varepsilon })\right] f^{\varepsilon }dxdtd\mathbb{P} \\
\;\;\;\;+\int_{Q_{T}\times \Omega }\left[ a_{i}^{\varepsilon }(\cdot
,u_{0},\Psi ^{\varepsilon })-a_{i}^{\varepsilon }(\cdot ,\psi _{j},\Psi
^{\varepsilon })\right] f^{\varepsilon }dxdtd\mathbb{P} \\
\;\;\;\;\;\;+\int_{Q_{T}\times \Omega }a_{i}^{\varepsilon }(\cdot ,\psi
_{j},\Psi ^{\varepsilon })f^{\varepsilon }dxdt-\iint_{Q_{T}\times \Omega
\times \Delta (A)}\widehat{a}_{i}(\cdot ,u_{0},\widehat{\Psi })\widehat{f}%
dxdtd\mathbb{P}d\beta \\
\;\;=A_{\varepsilon }+B_{\varepsilon ,j}+C_{\varepsilon ,j}%
\end{array}%
\end{equation*}%
where
\begin{equation*}
\begin{array}{l}
A_{\varepsilon }=\int_{Q_{T}\times \Omega }\left[ a_{i}^{\varepsilon }(\cdot
,u_{\varepsilon },\Psi ^{\varepsilon })-a_{i}^{\varepsilon }(\cdot
,u_{0},\Psi ^{\varepsilon })\right] f^{\varepsilon }dxdtd\mathbb{P}, \\
B_{\varepsilon ,j}=\int_{Q_{T}\times \Omega }\left[ a_{i}^{\varepsilon
}(\cdot ,u_{0},\Psi ^{\varepsilon })-a_{i}^{\varepsilon }(\cdot ,\psi
_{j},\Psi ^{\varepsilon })\right] f^{\varepsilon }dxdtd\mathbb{P}, \\
C_{\varepsilon ,j}=\int_{Q_{T}\times \Omega }a_{i}^{\varepsilon }(\cdot
,\psi _{j},\Psi ^{\varepsilon })f^{\varepsilon }dxdt-\iint_{Q_{T}\times
\Omega \times \Delta (A)}\widehat{a}_{i}(\cdot ,u_{0},\widehat{\Psi })%
\widehat{f}dxdtd\mathbb{P}d\beta .%
\end{array}%
\end{equation*}%
As far as $A_{\varepsilon }$ is concerned, we have
\begin{equation*}
\left| A_{\varepsilon }\right| \leq \int_{Q_{T}\times \Omega }m\left( \left|
u_{\varepsilon }-u_{0}\right| \right) \left( 1+\left| u_{\varepsilon
}\right| ^{p-1}+\left| u_{0}\right| ^{p-1}+\left| \Psi ^{\varepsilon
}\right| ^{p-1}\right) \left| f^{\varepsilon }\right| dxdtd\mathbb{P}.
\end{equation*}%
From the convergence result $u_{\varepsilon }\rightarrow u_{0}$ in $%
L^{2}(Q_{T}\times \Omega )$, we infer that $m\left( \left| u_{\varepsilon
}-u_{0}\right| \right) \rightarrow 0$ a.e. in $Q_{T}\times \Omega $ as $E\ni
\varepsilon \rightarrow 0$, so that, by Egorov's theorem, $A_{\varepsilon
}\rightarrow 0$ as $E\ni \varepsilon \rightarrow 0$. As for $C_{\varepsilon
,j}$, we see that the function $(x,t,\omega )\mapsto a_{i}(x,t,\cdot ,\cdot
,\psi _{j}(x,t,\omega ),\Psi (x,t,\cdot ,\cdot ,\omega ))$ belongs to $%
B(\Omega ;\mathcal{C}(\overline{Q}_{T};B_{A}^{p^{\prime },\infty }))$, in
such a way that we use the convergence result (\ref{5.2}) to get
\begin{equation*}
C_{\varepsilon ,j}\rightarrow \iint_{Q_{T}\times \Omega \times \Delta
(A)}\left( \widehat{a}_{i}(\cdot ,\psi _{j},\widehat{\Psi })-\widehat{a}%
_{i}(\cdot ,u_{0},\widehat{\Psi })\right) \widehat{f}dxdtd\mathbb{P}d\beta
\equiv \widehat{C}_{j}\text{\ as }E\ni \varepsilon \rightarrow 0.
\end{equation*}%
But as
\begin{equation*}
\left| \widehat{C}_{j}\right| \leq \iint_{Q_{T}\times \Omega \times \Delta
(A)}m\left( \left| \psi _{j}-u_{0}\right| \right) \left( 1+\left| \psi
_{j}\right| ^{p-1}+\left| u_{0}\right| ^{p-1}+\left| \widehat{\Psi }\right|
^{p-1}\right) \left| \widehat{f}\right| dxdtd\mathbb{P}d\beta ,
\end{equation*}%
arguing as before we get $\widehat{C}_{j}\rightarrow 0$ as $j\rightarrow
\infty $. We also have $\lim_{E\ni \varepsilon \rightarrow
0}\lim_{j\rightarrow \infty }B_{\varepsilon ,j}=0$, so that part (1) follows
from the equality
\begin{equation*}
\begin{array}{l}
\lim_{E\ni \varepsilon \rightarrow 0}\left( \int_{Q_{T}\times \Omega
}a_{i}^{\varepsilon }(\cdot ,u_{\varepsilon },\Psi ^{\varepsilon
})f^{\varepsilon }dxdtd\mathbb{P}-\iint_{Q_{T}\times \Omega \times \Delta
(A)}\widehat{a}_{i}(\cdot ,u_{0},\widehat{\Psi })\widehat{f}dxdtd\mathbb{P}%
d\beta \right) \\
\;=\lim_{E\ni \varepsilon \rightarrow 0}A_{\varepsilon }+\lim_{E\ni
\varepsilon \rightarrow 0}\lim_{j\rightarrow \infty }B_{\varepsilon
,j}+\lim_{E\ni \varepsilon \rightarrow 0}\lim_{j\rightarrow \infty
}C_{\varepsilon ,j}=0.%
\end{array}%
\end{equation*}%
Part (2) is a mere consequence of part (1).
\end{proof}

Another important result which will be needed is the

\begin{lemma}
\label{l5.1}Let $(u_{\varepsilon })_{\varepsilon }$ be a sequence in $%
L^{2}(Q_{T}\times \Omega )$ such that $u_{\varepsilon }\rightarrow u_{0}$ in
$L^{2}(Q_{T}\times \Omega )$ as $\varepsilon \rightarrow 0$ where $u_{0}\in
L^{2}(Q_{T}\times \Omega )$. Then for each positive integer $k$ we have ,
\begin{equation*}
M_{k}^{\varepsilon }(\cdot ,u_{\varepsilon })\rightarrow M_{k}(\cdot ,u_{0})%
\text{ in }L^{2}(Q_{T}\times \Omega )\text{-weak }\Sigma \text{ as }%
\varepsilon \rightarrow 0.
\end{equation*}
\end{lemma}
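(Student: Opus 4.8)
The plan is to verify the defining convergence (\ref{3.1}) with $p=p'=2$ directly, following the pattern of the proof of Corollary \ref{c5.1}(1), but exploiting that \textbf{A7.}(a) supplies a \emph{genuine} Lipschitz bound rather than a mere modulus of continuity, which lets me bypass the Egorov argument used there. First I would record that the target $M_k(\cdot,u_0)$ is well defined for $u_0\in L^2(Q_T\times\Omega)$: since $|M_k(y,\tau,u)-M_k(y,\tau,v)|\le|M(y,\tau,u)-M(y,\tau,v)|_{L_2}\le c_6|u-v|$ and $|M_k(y,\tau,u)|\le c_7(1+|u|)$ (using $|Re_k|\le|R|_{L_2}$), the map $\psi_0\mapsto M_k(\cdot,\psi_0)$ from $B(\Omega;\mathcal{C}(\overline{Q}_T;A))$ into $L^2(Q_T\times\Omega;B_A^2)$ is Lipschitz for the $L^2$-norms and extends by continuity exactly as in Proposition \ref{p5.2}; hence $\widehat{M}_k(\cdot,u_0)=\mathcal{G}_1\circ\varrho(M_k(\cdot,u_0))$ lies in $L^2(Q_T\times\Omega;L^2(\Delta(A)))$ and the statement makes sense. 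It then suffices to show, for every test function $f\in L^2(\Omega;L^2(Q_T;A))$, that $\int_{Q_T\times\Omega}M_k^\varepsilon(\cdot,u_\varepsilon)f^\varepsilon\,dxdtd\mathbb{P}\to\iint_{Q_T\times\Omega\times\Delta(A)}\widehat{M}_k(\cdot,u_0)\widehat{f}\,dxdtd\mathbb{P}d\beta$.

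Next I would fix an approximating sequence $(\psi_j)_j$ in $B(\Omega)\otimes\mathcal{C}_0^\infty(Q_T)$ with $\psi_j\to u_0$ in $L^2(Q_T\times\Omega)$ and split the difference to be estimated as $A_\varepsilon+B_{\varepsilon,j}+C_{\varepsilon,j}$, where
\begin{equation*}
A_\varepsilon=\int_{Q_T\times\Omega}[M_k^\varepsilon(\cdot,u_\varepsilon)-M_k^\varepsilon(\cdot,u_0)]f^\varepsilon\,dxdtd\mathbb{P},
\end{equation*}
\begin{equation*}
B_{\varepsilon,j}=\int_{Q_T\times\Omega}[M_k^\varepsilon(\cdot,u_0)-M_k^\varepsilon(\cdot,\psi_j)]f^\varepsilon\,dxdtd\mathbb{P},
\end{equation*}
and $C_{\varepsilon,j}$ is the remaining term comparing $\int_{Q_T\times\Omega}M_k^\varepsilon(\cdot,\psi_j)f^\varepsilon$ with $\iint\widehat{M}_k(\cdot,u_0)\widehat{f}$.

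For $A_\varepsilon$ and $B_{\varepsilon,j}$ I would use the Lipschitz bound together with Cauchy--Schwarz to get $|A_\varepsilon|\le c_6\|u_\varepsilon-u_0\|_{L^2(Q_T\times\Omega)}\|f^\varepsilon\|_{L^2(Q_T\times\Omega)}$ and $|B_{\varepsilon,j}|\le c_6\|u_0-\psi_j\|_{L^2(Q_T\times\Omega)}\|f^\varepsilon\|_{L^2(Q_T\times\Omega)}$. Since $\|f^\varepsilon\|_{L^2(Q_T\times\Omega)}$ is bounded uniformly in $\varepsilon$ (indeed its square converges to $\|\widehat{f}\|^2_{L^2(Q_T\times\Omega\times\Delta(A))}$ by the mean value property (\textit{MV})), the hypothesis $u_\varepsilon\to u_0$ in $L^2(Q_T\times\Omega)$ forces $A_\varepsilon\to0$, while $\sup_\varepsilon|B_{\varepsilon,j}|\to0$ as $j\to\infty$. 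For $C_{\varepsilon,j}$ I would note that $M_k(\cdot,\psi_j)\in B(\Omega;\mathcal{C}(\overline{Q}_T;B_A^{2,\infty}))$ (boundedness from \textbf{A7.}(b), membership in $B_A^2$ from Proposition \ref{p5.1}), so the extension of property (\ref{3.1}) to such functions recorded at the start of Section 5 yields $\int M_k^\varepsilon(\cdot,\psi_j)f^\varepsilon\to\iint\widehat{M}_k(\cdot,\psi_j)\widehat{f}$, whence $C_{\varepsilon,j}\to\widehat{C}_j:=\iint(\widehat{M}_k(\cdot,\psi_j)-\widehat{M}_k(\cdot,u_0))\widehat{f}$. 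Since the Gelfand transform carries the pointwise-in-$(y,\tau)$ Lipschitz bound over to $\Delta(A)$, giving $|\widehat{M}_k(\cdot,\psi_j)-\widehat{M}_k(\cdot,u_0)|\le c_6|\psi_j-u_0|$ $\beta$-a.e., and $\beta$ has total mass $1$, Cauchy--Schwarz yields $|\widehat{C}_j|\le c_6\|\psi_j-u_0\|_{L^2(Q_T\times\Omega)}\|\widehat{f}\|_{L^2(Q_T\times\Omega\times\Delta(A))}\to0$.

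Finally, since the quantity to be estimated equals $A_\varepsilon+B_{\varepsilon,j}+C_{\varepsilon,j}$ for \emph{every} $j$, I would take $\limsup_{\varepsilon\to0}$ and then let $j\to\infty$: the resulting bound $\limsup_{\varepsilon\to0}|\cdots|\le c_6\|u_0-\psi_j\|_{L^2(Q_T\times\Omega)}\sup_\varepsilon\|f^\varepsilon\|_{L^2(Q_T\times\Omega)}+|\widehat{C}_j|\to0$ gives the claimed convergence. The only genuinely framework-dependent steps, and hence the ones I expect to be the main obstacle, are the uniform bound on $\|f^\varepsilon\|_{L^2(Q_T\times\Omega)}$ and the convergence of $C_{\varepsilon,j}$; both rest on the extension of (\ref{3.1}) to $B(\Omega;\mathcal{C}(\overline{Q}_T;B_A^{2,\infty}))$ and on the fact that $\mathcal{G}$ is an isometric isomorphism transporting the Lipschitz estimate from $B_A^2$ to $L^2(\Delta(A))$.
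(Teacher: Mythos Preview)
Your proof is correct and follows essentially the same approach as the paper's. The only cosmetic difference is that the paper first packages the density argument into the intermediate statement $M_k^\varepsilon(\cdot,u_0)\to M_k(\cdot,u_0)$ in $L^2(Q_T\times\Omega)$-weak $\Sigma$ and then performs a two-term split $A_\varepsilon + (\text{rest})$, whereas you unfold that density step explicitly via the three-term decomposition $A_\varepsilon+B_{\varepsilon,j}+C_{\varepsilon,j}$; the ingredients (Lipschitz bound from \textbf{A7.}(a), Cauchy--Schwarz, uniform bound on $\|f^\varepsilon\|_{L^2}$, and the extension of (\ref{3.1}) to $B(\Omega;\mathcal{C}(\overline{Q}_T;B_A^{2,\infty}))$) are identical.
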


\begin{proof}
First of all, let $u\in B(\Omega ;\mathcal{C}(\overline{Q}_{T}))$; then the
function $(x,t,y,\tau ,\omega )\mapsto M_{k}(y,\tau ,u(x,t,\omega ))$ lies
in $B(\Omega ;\mathcal{C}(\overline{Q}_{T};B_{A}^{2,\infty }))$, so that we
have $M_{k}^{\varepsilon }(\cdot ,u)\rightarrow M_{k}(\cdot ,u)$ in $%
L^{2}(Q_{T}\times \Omega )$-weak $\Sigma $ as $\varepsilon \rightarrow 0$.
Next, since $B(\Omega ;\mathcal{C}(\overline{Q}_{T}))$ is dense in $%
L^{2}(Q_{T}\times \Omega )$, it can be easily shown that
\begin{equation}
M_{k}^{\varepsilon }(\cdot ,u_{0})\rightarrow M_{k}(\cdot ,u_{0})\text{ in }%
L^{2}(Q_{T}\times \Omega )\text{-weak }\Sigma \text{ as }\varepsilon
\rightarrow 0.  \label{Eq1}
\end{equation}%
Now, let $f\in L^{2}(\Omega ;L^{2}(Q_{T};A))$; then
\begin{eqnarray*}
&&\int_{Q_{T}\times \Omega }M_{k}^{\varepsilon }(\cdot ,u_{\varepsilon
})f^{\varepsilon }dxdtd\mathbb{P}-\iint_{Q_{T}\times \Omega \times \Delta
(A)}\widehat{M}_{k}(\cdot ,u_{0})\widehat{f}dxdtd\mathbb{P}d\beta \\
&=&\int_{Q_{T}\times \Omega }(M_{k}^{\varepsilon }(\cdot ,u_{\varepsilon
})-M_{k}^{\varepsilon }(\cdot ,u_{0}))f^{\varepsilon }dxdtd\mathbb{P}%
+\int_{Q_{T}\times \Omega }M_{k}^{\varepsilon }(\cdot ,u_{0})f^{\varepsilon
}dxdtd\mathbb{P} \\
&&-\iint_{Q_{T}\times \Omega \times \Delta (A)}\widehat{M}_{k}(\cdot ,u_{0})%
\widehat{f}dxdtd\mathbb{P}d\beta .
\end{eqnarray*}%
Using the inequality
\begin{equation*}
\left| \int_{Q_{T}\times \Omega }(M_{k}^{\varepsilon }(\cdot ,u_{\varepsilon
})-M_{k}^{\varepsilon }(\cdot ,u_{0}))f^{\varepsilon }dxdtd\mathbb{P}\right|
\leq C\left\| u_{\varepsilon }-u_{0}\right\| _{L^{2}(Q_{T}\times \Omega
)}\left\| f^{\varepsilon }\right\| _{L^{2}(Q_{T}\times \Omega )}
\end{equation*}%
in conjunction with (\ref{Eq1}) leads at once to the result.
\end{proof}

\begin{remark}
\label{r5.1}\emph{From the Lipschitz property of the function }$M_{k}$\emph{%
\ we may get more information on the limit of the sequence }$%
M_{k}^{\varepsilon }(\cdot ,u_{\varepsilon })$\emph{. Indeed, since }$\left|
M_{k}^{\varepsilon }(\cdot ,u_{\varepsilon })-M_{k}^{\varepsilon }(\cdot
,u_{0})\right| \leq C\left| u_{\varepsilon }-u_{0}\right| $\emph{, we deduce
the following convergence result: }%
\begin{equation*}
M_{k}^{\varepsilon }(\cdot ,u_{\varepsilon })\rightarrow \widetilde{M}%
_{k}(u_{0})\text{\ in }L^{2}(Q_{T}\times \Omega )\text{ as }\varepsilon
\rightarrow 0
\end{equation*}%
\emph{where }$\widetilde{M}_{k}(u_{0})(x,t,\omega )=\int_{\Delta (A)}%
\widehat{M}_{k}(s,s_{0},u_{0}(x,t,\omega ))d\beta $\emph{, so that we can
derive the existence of a subsequence of }$M_{k}^{\varepsilon }(\cdot
,u_{\varepsilon })$\emph{\ that converges a.e. in }$Q_{T}\times \Omega $%
\emph{\ to }$\widetilde{M}_{k}(u_{0})$\emph{. For the next sections we will
need the following function: }$\widetilde{M}(u_{0})=(\widetilde{M}%
_{k}(u_{0}))_{k\geq 1}$\emph{.}
\end{remark}

We end this section with some useful spaces. Let
\begin{equation*}
\mathbb{F}_{0}^{1,p}=L^{p}(\bar{\Omega}\times \left( 0,T\right)
;W_{0}^{1,p}(Q))\times L^{p}(Q_{T}\times \bar{\Omega};\mathcal{V})
\end{equation*}%
and
\begin{equation*}
\mathcal{F}_{0}^{\infty }=[B(\bar{\Omega})\otimes \mathcal{C}_{0}^{\infty
}(Q_{T})]\times \lbrack B(\bar{\Omega})\otimes \mathcal{C}_{0}^{\infty
}(Q_{T})\otimes \mathcal{E}]
\end{equation*}%
where $\mathcal{V}=\mathcal{B}_{A_{\tau }}^{p}(\mathbb{R}_{\tau };\mathcal{B}%
_{\#A_{y}}^{1,p})$ and $\mathcal{E}=\varrho _{\tau }(A_{\tau }^{\infty
})\otimes \lbrack \varrho _{y}(A_{y}^{\infty }/\mathbb{R})]$, and $\varrho
_{\tau }$ (resp. $\varrho _{y}$) denotes the canonical mapping of $%
B_{A_{\tau }}^{p}$ (resp. $B_{A_{y}}^{p}$) onto $\mathcal{B}_{A_{\tau }}^{p}$
(resp. $\mathcal{B}_{A_{y}}^{p}$). $\mathbb{F}_{0}^{1,p}$ is a Banach space
under the norm
\begin{equation*}
\left\| (u_{0},u_{1})\right\| _{\mathbb{F}_{0}^{1,p}}=\left\| u_{0}\right\|
_{L^{p}(\bar{\Omega}\times \left( 0,T\right) ;W_{0}^{1,p}(Q))}+\left\|
u_{1}\right\| _{L^{p}(Q_{T}\times \bar{\Omega};\mathcal{V})}.
\end{equation*}%
Moreover, since $B(\bar{\Omega})$ is dense in $L^{p}(\bar{\Omega})$, it is
an easy matter to check that $\mathcal{F}_{0}^{\infty }$ is dense in $%
\mathbb{F}_{0}^{1,p}$.

\section{Homogenization results}

Let $(u_{\varepsilon _{j}})$ be the sequence determined in Section 4 and
satisfying Eq. (\ref{4.10}). It therefore satisfies the a priori estimates (%
\ref{0:3})-(\ref{0:4}), so that, by the diagonal process, one can find a
subsequence of $(u_{\varepsilon _{j}})_{j}$ not relabeled, which weakly
converges in $L^{p}(\bar{\Omega};L^{p}(0,T;W_{0}^{1,p}(Q)))$ to $u_{0}$
determined by the Skorokhod's theorem and satisfying (\ref{0:21}). Next, due
to the estimate (\ref{0:3}) (which yields the uniform integrability of the
sequence $(u_{\varepsilon _{j}})_{j}$ with respect to $\omega $) and the
Vitali's theorem, we deduce from (\ref{0:21}) that, as $j\rightarrow \infty $%
,
\begin{equation}
u_{\varepsilon _{j}}\rightarrow u_{0}\text{ in }L^{2}(Q_{T}\times \bar{\Omega%
}).  \label{6.1}
\end{equation}%
Then, from Theorem \ref{t3.3}, we infer the existence of a function $%
u_{1}\in L^{p}(\bar{\Omega};L^{p}(Q_{T};\mathcal{B}_{A_{\tau }}^{p}(\mathbb{R%
}_{\tau };\mathcal{B}_{\#A_{y}}^{1,p})))$ such that
\begin{equation}
\frac{\partial u_{\varepsilon _{j}}}{\partial x_{i}}\rightarrow \frac{%
\partial u_{0}}{\partial x_{i}}+\frac{\overline{\partial }u_{1}}{\partial
y_{i}}\text{ in }L^{p}(Q_{T}\times \bar{\Omega})\text{-weak }\Sigma \text{ }%
(1\leq i\leq N)  \label{6.2}
\end{equation}%
hold when $\varepsilon _{j}\rightarrow 0$.

With this in mind, the following \textit{global} homogenization result holds.

\begin{proposition}
\label{p6.1}The couple $(u_{0},u_{1})\in \mathbb{F}_{0}^{1,p}$ determined
above solves the following variational problem
\begin{equation}
\left\{
\begin{array}{l}
-\int_{Q_{T}\times \bar{\Omega}}u_{0}\psi _{0}^{\prime }dxdtd\bar{\mathbb{P}}%
+\iint_{Q_{T}\times \bar{\Omega}\times \Delta (A)}\widehat{a}_{0}(\cdot
,u_{0})\psi _{0}dxdtd\bar{\mathbb{P}}d\beta \\
+\iint_{Q_{T}\times \bar{\Omega}\times \Delta (A)}\widehat{a}(\cdot
,u_{0},Du_{0}+\partial \widehat{u}_{1})\cdot (D\psi _{0}+\partial \widehat{%
\psi }_{1})dxdtd\bar{\mathbb{P}}d\beta \\
=\int_{\bar{\Omega}}\int_{0}^{T}\left( \widetilde{M}(u_{0}),\psi _{0}\right)
d\bar{W}d\bar{\mathbb{P}}\text{ for all }(\psi _{0},\psi _{1})\in \mathcal{F}%
_{0}^{\infty }\text{.}%
\end{array}%
\right.  \label{6.3}
\end{equation}
\end{proposition}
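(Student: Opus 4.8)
The plan is to feed the oscillating corrector test function $\Phi_{\varepsilon}=\psi_{0}+\varepsilon\psi_{1}^{\varepsilon}$, with $(\psi_{0},\psi_{1})\in\mathcal{F}_{0}^{\infty}$, into the Skorokhod equation \eqref{4.10} and let $\varepsilon_{j}\to0$, using the compactness results of Section 5. Writing $\psi_{0}=\sum_{l}\xi_{l}\phi_{l}$ and $\psi_{1}=\sum_{m}\eta_{m}\phi_{m}$ with $\xi_{l},\eta_{m}\in B(\bar{\Omega})$ and $\phi_{l},\phi_{m}$ deterministic and compactly supported in $t\in(0,T)$, I would first test \eqref{4.10} against the deterministic spatial–temporal factors, integrate by parts in time (legitimate since $u_{\varepsilon_{j}}\in\mathcal{C}(0,T;H)$ $\bar{\mathbb{P}}$-a.s.), multiply by the bounded $\bar{\mathcal{F}}$-measurable weights and take the expectation $\bar{\mathbb{E}}$ on $(\bar{\Omega},\bar{\mathcal{F}},\bar{\mathbb{P}})$. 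Since these weights factor out of the time integration, the martingale terms remain genuine (adapted) It\^o integrals multiplied by bounded random variables, so no anticipating calculus is needed, and the boundary terms vanish by the compact support in $t$. This yields
\[
-\bar{\mathbb{E}}\int_{0}^{T}(u_{\varepsilon_{j}},\partial_{t}\Phi_{\varepsilon_{j}})\,dt+\bar{\mathbb{E}}\int_{0}^{T}(a^{\varepsilon_{j}}(\cdot,u_{\varepsilon_{j}},Du_{\varepsilon_{j}}),D\Phi_{\varepsilon_{j}})\,dt+\bar{\mathbb{E}}\int_{0}^{T}(a_{0}^{\varepsilon_{j}}(\cdot,u_{\varepsilon_{j}}),\Phi_{\varepsilon_{j}})\,dt=\bar{\mathbb{E}}\sum_{k}\int_{0}^{T}(M_{k}^{\varepsilon_{j}}(\cdot,u_{\varepsilon_{j}}),\Phi_{\varepsilon_{j}})\,dW_{k}^{\varepsilon_{j}}.
\]

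Next I would pass to the limit in the three terms that are affine in the test function. Since $\partial_{t}\Phi_{\varepsilon_{j}}=\psi_{0}'+(\partial_{\tau}\psi_{1})^{\varepsilon_{j}}+O(\varepsilon_{j})$ and the mean value of $\partial_{\tau}\psi_{1}$ vanishes, the strong convergence \eqref{6.1} of $u_{\varepsilon_{j}}$ paired with the weak-$*$ (mean-value) limit of the oscillating factor leaves only $-\bar{\mathbb{E}}\int_{Q_{T}}u_{0}\psi_{0}'$. For the zeroth-order term, the analogue of Corollary \ref{c5.1}(1) for $a_{0}$ (valid by \textbf{A4}–\textbf{A5}) together with \eqref{6.1} gives $a_{0}^{\varepsilon_{j}}(\cdot,u_{\varepsilon_{j}})\to a_{0}(\cdot,u_{0})$ in $L^{2}$-weak $\Sigma$ while $\Phi_{\varepsilon_{j}}\to\psi_{0}$ is an admissible test function, producing $\iint\widehat{a}_{0}(\cdot,u_{0})\psi_{0}\,d\beta$. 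In the martingale term the $\varepsilon_{j}\psi_{1}^{\varepsilon_{j}}$ part is $O(\varepsilon_{j})$; for the $\psi_{0}$ part I would invoke the strong convergence $M^{\varepsilon_{j}}(\cdot,u_{\varepsilon_{j}})\to\widetilde{M}(u_{0})$ in $L^{2}(Q_{T}\times\bar{\Omega})$ of Remark \ref{r5.1}, the a.s. convergence $W^{\varepsilon_{j}}\to\bar{W}$ in \eqref{0:20}, and a standard lemma on convergence of stochastic integrals, to reach $\bar{\mathbb{E}}\int_{0}^{T}(\widetilde{M}(u_{0}),\psi_{0})\,d\bar{W}$.

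The flux term is the heart of the matter. By \textbf{A3} and the a priori bounds, $\xi_{\varepsilon_{j}}:=a^{\varepsilon_{j}}(\cdot,u_{\varepsilon_{j}},Du_{\varepsilon_{j}})$ is bounded in $L^{p'}(Q_{T}\times\bar{\Omega})^{N}$, so by Theorem \ref{t3.1} a subsequence weak-$\Sigma$ converges to some $\chi$, and since $D\Phi_{\varepsilon_{j}}\to D\psi_{0}+\overline{D}_{y}\psi_{1}$ the identity above passes to the limit with $\widehat{\chi}$ replacing $\widehat{a}(\cdot,u_{0},Du_{0}+\partial\widehat{u}_{1})$. It remains to identify $\chi=a(\cdot,u_{0},Du_{0}+\partial\widehat{u}_{1})$, for which I would run the Minty–Browder device. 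From \textbf{A2} (equivalently the monotonicity in Proposition \ref{p5.2}),
\[
\bar{\mathbb{E}}\int_{0}^{T}\!\!\int_{Q}\bigl(\xi_{\varepsilon_{j}}-a^{\varepsilon_{j}}(\cdot,u_{\varepsilon_{j}},D\Phi_{\varepsilon_{j}})\bigr)\cdot(Du_{\varepsilon_{j}}-D\Phi_{\varepsilon_{j}})\,dx\,dt\ge0 .
\]
Every product here except $\bar{\mathbb{E}}\int_{0}^{T}(\xi_{\varepsilon_{j}},Du_{\varepsilon_{j}})\,dt$ converges by Corollary \ref{c5.1}(i)–(ii) (whose oscillating gradient is exactly \eqref{5.4}) and by \eqref{6.2}; the remaining energy term I would evaluate through It\^o's formula applied to $|u_{\varepsilon_{j}}(t)|^{2}$, namely
\[
\bar{\mathbb{E}}\int_{0}^{T}(\xi_{\varepsilon_{j}},Du_{\varepsilon_{j}})\,dt=\tfrac12\bar{\mathbb{E}}|u^{0}|^{2}-\tfrac12\bar{\mathbb{E}}|u_{\varepsilon_{j}}(T)|^{2}-\bar{\mathbb{E}}\int_{0}^{T}(a_{0}^{\varepsilon_{j}},u_{\varepsilon_{j}})\,dt+\tfrac12\bar{\mathbb{E}}\int_{0}^{T}|M^{\varepsilon_{j}}(\cdot,u_{\varepsilon_{j}})|_{L_{2}}^{2}\,dt .
\]

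The main obstacle is the passage to the limit in this energy identity, which is exactly where the stochastic character intervenes. The It\^o correction $\tfrac12\bar{\mathbb{E}}\int_{0}^{T}|M^{\varepsilon_{j}}(\cdot,u_{\varepsilon_{j}})|_{L_{2}}^{2}$ is controlled by the strong $L^{2}$ convergence of Remark \ref{r5.1}, so its limit is $\tfrac12\bar{\mathbb{E}}\int_{0}^{T}|\widetilde{M}(u_{0})|_{L_{2}}^{2}$, while the terminal term only yields the one-sided bound $\liminf\bar{\mathbb{E}}|u_{\varepsilon_{j}}(T)|^{2}\ge\bar{\mathbb{E}}|u_{0}(T)|^{2}$ by weak lower semicontinuity, which fortunately is the direction Minty needs. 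Applying It\^o's formula to $|u_{0}(T)|^{2}$ in the limiting macroscopic SPDE, whose flux is the $\beta$-average of $\widehat{\chi}$ and whose diffusion is $\widetilde{M}(u_{0})$ carrying the same correction term, and using the cell relation $\iint\widehat{\chi}\cdot\partial\widehat{u}_{1}\,d\beta=0$ (got by testing the limit identity with $(0,u_{1})$), these terms cancel pairwise and give $\limsup\bar{\mathbb{E}}\int_{0}^{T}(\xi_{\varepsilon_{j}},Du_{\varepsilon_{j}})\,dt\le\iint\widehat{\chi}\cdot(Du_{0}+\partial\widehat{u}_{1})\,d\beta$. Inserting this upper bound into the expanded monotonicity inequality forces $\iint(\widehat{\chi}-\widehat{a}(\cdot,u_{0},D\psi_{0}+\partial\widehat{\psi}_{1}))\cdot(Du_{0}+\partial\widehat{u}_{1}-D\psi_{0}-\partial\widehat{\psi}_{1})\,d\beta\ge0$; letting $(\psi_{0},\psi_{1})$ range over $\mathcal{F}_{0}^{\infty}$ (dense in $\mathbb{F}_{0}^{1,p}$) and perturbing $(u_{0},u_{1})$ in an arbitrary direction then identifies $\chi$, turning the limit identity into \eqref{6.3}. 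The delicate points throughout are precisely the cancellation of the two It\^o corrections via the strong convergence of $M^{\varepsilon_{j}}(\cdot,u_{\varepsilon_{j}})$ and the careful handling of the non-adapted $B(\bar{\Omega})$-weights in the stochastic integrals.
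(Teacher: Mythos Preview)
Your overall strategy coincides with the paper's: insert $\Phi_{\varepsilon}=\psi_{0}+\varepsilon\psi_{1}^{\varepsilon}$ into \eqref{4.10}, integrate by parts in $t$, take $\bar{\mathbb{E}}$, and pass to the limit in the time--derivative, zero--order and stochastic terms exactly as the paper does (via \eqref{6.1}, Corollary~\ref{c5.1}, Lemma~\ref{l5.1} and Remark~\ref{r5.1}), leaving the nonlinear flux to be identified by a Minty--Browder argument after extracting a weak--$\Sigma$ limit $\chi$ of $a^{\varepsilon_j}(\cdot,u_{\varepsilon_j},Du_{\varepsilon_j})$ through Theorem~\ref{t3.1}.

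The substantive difference lies in how the key inequality \eqref{6.6} is reached. The paper writes down the monotonicity relation \eqref{6.5} and simply asserts that one ``passes to the limit in \eqref{6.5} using Corollary~\ref{c5.1}'' to obtain \eqref{6.6}, without explaining how the quadratic term $\int a^{\varepsilon_j}(\cdot,u_{\varepsilon_j},Du_{\varepsilon_j})\cdot Du_{\varepsilon_j}\,dx\,dt\,d\bar{\mathbb{P}}$ --- which is \emph{not} of the product form treated by Corollary~\ref{c5.1} --- is handled. You instead supply the missing upper bound
\[
\limsup_{j}\,\bar{\mathbb{E}}\!\int_{0}^{T}\!\bigl(a^{\varepsilon_j}(\cdot,u_{\varepsilon_j},Du_{\varepsilon_j}),Du_{\varepsilon_j}\bigr)\,dt
\;\le\;\iint_{Q_{T}\times\bar{\Omega}\times\Delta(A)}\widehat{\chi}\cdot\mathbb{D}\mathbf{u}\,dx\,dt\,d\bar{\mathbb{P}}\,d\beta
\]
by comparing the It\^o energy identity for $\tfrac12|u_{\varepsilon_j}|^{2}$ with that for $\tfrac12|u_{0}|^{2}$ in the (already established) limit SPDE whose flux is the $\beta$--average of $\widehat{\chi}$, using weak lower semicontinuity of $\bar{\mathbb{E}}|u_{\varepsilon_j}(T)|^{2}$, the convergence of the It\^o corrections from Remark~\ref{r5.1}, and the cell relation $\iint\widehat{\chi}\cdot\partial\widehat{u}_{1}\,d\beta=0$ obtained by testing the limit identity against $(0,u_{1})$. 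This is the standard rigorous route for monotone SPDEs (in the spirit of \cite{pardoux,KRYLOV}) and makes the passage from \eqref{6.5} to \eqref{6.6} transparent; the paper's one--line shortcut ultimately relies on the same energy comparison but leaves it implicit. The concluding steps --- density of $\mathcal{F}_{0}^{\infty}$ in $\mathbb{F}_{0}^{1,p}$ and the hemicontinuity perturbation $\Phi=\mathbf{u}+\lambda\mathbf{v}$ --- are identical in both arguments.
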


\begin{proof}
In what follows, we omit the index $j$ from the sequence $\varepsilon _{j}$.
So we will merely write $\varepsilon $ for $\varepsilon _{j}$. With this in
mind, let $\Phi =(\psi _{0},\varrho \circ \psi _{1})\in \mathcal{F}%
_{0}^{\infty }$ with $\psi _{0}\in B(\bar{\Omega})\otimes \mathcal{C}%
_{0}^{\infty }(Q_{T})$, $\psi _{1}\in B(\bar{\Omega})\otimes \mathcal{C}%
_{0}^{\infty }(Q_{T})\otimes \lbrack A_{\tau }^{\infty }\otimes
(A_{y}^{\infty }/\mathbb{R})]$. Define $\Phi _{\varepsilon }$ as in (\ref%
{5.4}) (see Corollary \ref{c5.1}). Then, $\Phi _{\varepsilon }\in B(\bar{%
\Omega})\otimes \mathcal{C}_{0}^{\infty }(Q_{T})$ and, using $\Phi
_{\varepsilon }$ as a test function in the variational formulation of (\ref%
{4.10}) we get
\begin{eqnarray*}
\int_{\bar{\Omega}}\left( u_{\varepsilon }(T),\Phi _{\varepsilon }(T)\right)
d\bar{\mathbb{P}} &=&\int_{\bar{\Omega}}\left( u^{0},\Phi _{\varepsilon
}(0)\right) d\bar{\mathbb{P}}+\int_{Q_{T}\times \bar{\Omega}}u_{\varepsilon }%
\frac{\partial \Phi _{\varepsilon }}{\partial t}dxdtd\bar{\mathbb{P}} \\
&&-\int_{Q_{T}\times \bar{\Omega}}a^{\varepsilon }(\cdot ,u_{\varepsilon
},Du_{\varepsilon })\cdot D\Phi _{\varepsilon }dxdtd\bar{\mathbb{P}} \\
&&-\int_{Q_{T}\times \bar{\Omega}}a_{0}^{\varepsilon }(\cdot ,u_{\varepsilon
})\Phi _{\varepsilon }dxdtd\bar{\mathbb{P}} \\
&&+\int_{\bar{\Omega}}\int_{0}^{T}\left( M^{\varepsilon }(\cdot
,u_{\varepsilon }),\Phi _{\varepsilon }\right) dW^{\varepsilon }d\bar{%
\mathbb{P}},
\end{eqnarray*}%
or equivalently, taking into account the fact that $\Phi _{\varepsilon
}(0)=\Phi _{\varepsilon }(T)=0$,
\begin{eqnarray}
&&-\int_{Q_{T}\times \bar{\Omega}}u_{\varepsilon }\frac{\partial \Phi
_{\varepsilon }}{\partial t}dxdtd\bar{\mathbb{P}}+\int_{Q_{T}\times \bar{%
\Omega}}a^{\varepsilon }(\cdot ,u_{\varepsilon },Du_{\varepsilon })\cdot
D\Phi _{\varepsilon }dxdtd\bar{\mathbb{P}}  \label{6.4} \\
&&+\int_{Q_{T}\times \bar{\Omega}}a_{0}^{\varepsilon }(\cdot ,u_{\varepsilon
})\Phi _{\varepsilon }dxdtd\bar{\mathbb{P}}=\int_{0}^{T}\int_{\bar{\Omega}%
}\left( M^{\varepsilon }(\cdot ,u_{\varepsilon }),\Phi _{\varepsilon
}\right) dW^{\varepsilon }d\bar{\mathbb{P}}.  \notag
\end{eqnarray}%
We consider the terms in (\ref{6.4}) respectively.

Firstly we have
\begin{eqnarray*}
\int_{Q_{T}\times \bar{\Omega}}u_{\varepsilon }\frac{\partial \Phi
_{\varepsilon }}{\partial t}dxdtd\bar{\mathbb{P}} &=&\int_{Q_{T}\times \bar{%
\Omega}}u_{\varepsilon }\frac{\partial \psi _{0}}{\partial t}dxdtd\bar{%
\mathbb{P}}+\varepsilon \int_{Q_{T}\times \bar{\Omega}}u_{\varepsilon
}\left( \frac{\partial \psi _{1}}{\partial t}\right) ^{\varepsilon }dxdtd%
\bar{\mathbb{P}} \\
&&+\int_{Q_{T}\times \bar{\Omega}}u_{\varepsilon }\left( \frac{\partial \psi
_{1}}{\partial \tau }\right) ^{\varepsilon }dxdtd\bar{\mathbb{P}}.
\end{eqnarray*}%
But in view of (\ref{0:21}) we have that $u_{\varepsilon }\rightarrow u_{0}$
in $L^{2}(Q_{T}\times \bar{\Omega})$ (strong). Moreover, since $(\partial
\psi _{1}/\partial \tau )^{\varepsilon }\rightarrow M(\partial \psi
_{1}/\partial \tau )=0$ in $L^{2}(Q_{T}\times \bar{\Omega})$-weak, we deduce
from the preceding strong convergence result that
\begin{equation*}
\int_{Q_{T}\times \bar{\Omega}}u_{\varepsilon }\frac{\partial \Phi
_{\varepsilon }}{\partial t}dxdtd\bar{\mathbb{P}}\rightarrow
\int_{Q_{T}\times \bar{\Omega}}u_{0}\frac{\partial \psi _{0}}{\partial t}%
dxdtd\bar{\mathbb{P}}.
\end{equation*}%
Next, from Corollary \ref{c5.1}, it follows that $a_{0}^{\varepsilon }(\cdot
,u_{\varepsilon })\rightarrow a_{0}(\cdot ,u_{0})$ in $L^{2}(Q_{T}\times
\bar{\Omega})$-weak $\Sigma $, so that
\begin{equation*}
\int_{Q_{T}\times \bar{\Omega}}a_{0}^{\varepsilon }(\cdot ,u_{\varepsilon
})\Phi _{\varepsilon }dxdtd\bar{\mathbb{P}}\rightarrow \iint_{Q_{T}\times
\bar{\Omega}\times \Delta (A)}\widehat{a}_{0}(\cdot ,u_{0})\psi _{0}dxdtd%
\bar{\mathbb{P}}d\beta .
\end{equation*}%
As far as the term $\int_{0}^{T}\int_{\bar{\Omega}}\left( M^{\varepsilon
}(\cdot ,u_{\varepsilon }),\Phi _{\varepsilon }\right) dW^{\varepsilon }d%
\bar{\mathbb{P}}$ is concerned, thanks to Remark \ref{r5.1} we get at once
\begin{equation*}
\int_{\bar{\Omega}}\int_{0}^{T}\left( M^{\varepsilon }(\cdot ,u_{\varepsilon
}),\Phi _{\varepsilon }\right) dW^{\varepsilon }d\bar{\mathbb{P}}\rightarrow
\int_{\bar{\Omega}}\int_{0}^{T}\left( \widetilde{M}(u_{0}),\psi _{0}\right) d%
\bar{W}d\bar{\mathbb{P}}.
\end{equation*}%
The last term is more involved. Indeed, by the monotonicity argument, it
emerges that
\begin{equation}
\int_{Q_{T}\times \bar{\Omega}}(a^{\varepsilon }(\cdot ,u_{\varepsilon
},Du_{\varepsilon })-a^{\varepsilon }(\cdot ,u_{\varepsilon },D\Phi
_{\varepsilon }))\cdot (Du_{\varepsilon }-D\Phi _{\varepsilon })dxdtd\bar{%
\mathbb{P}}\geq 0\text{.}  \label{6.5}
\end{equation}%
Owing to the estimate (\ref{0:4}) (denoting by $\overline{\mathbb{E}}$ the
mathematical expectation on $(\bar{\Omega},\bar{\mathcal{F}},\bar{\mathbb{P}}%
)$) we infer that
\begin{equation*}
\sup_{\varepsilon >0}\overline{\mathbb{E}}\left\Vert a^{\varepsilon }(\cdot
,u_{\varepsilon },Du_{\varepsilon })\right\Vert _{L^{p^{\prime
}}(Q_{T})^{N}}^{p^{\prime }}<\infty ,
\end{equation*}%
so that, from Theorem \ref{t3.1}, there exist a function $\chi \in
L^{p^{\prime }}(Q_{T}\times \bar{\Omega};\mathcal{B}_{A}^{p^{\prime }})^{N}$
and a subsequence of $\varepsilon $ not relabeled, such that $a^{\varepsilon
}(\cdot ,u_{\varepsilon },Du_{\varepsilon })\rightarrow \chi $ in $%
L^{p^{\prime }}(Q_{T}\times \bar{\Omega})^{N}$-weak $\Sigma $ as $%
\varepsilon \rightarrow 0$. We therefore pass to the limit in (\ref{6.5})
(as $\varepsilon \rightarrow 0$) using Corollary \ref{c5.1} to get
\begin{equation}
\iint_{Q_{T}\times \bar{\Omega}\times \Delta (A)}(\widehat{\chi }-\widehat{a}%
(\cdot ,u_{0},\mathbb{D}\Phi ))\cdot (\mathbb{D}\mathbf{u}-\mathbb{D}\Phi
)dxdtd\bar{\mathbb{P}}d\beta \geq 0  \label{6.6}
\end{equation}%
for any $\Phi \in \mathcal{F}_{0}^{\infty }$ where $\mathbb{D}\mathbf{u}%
=Du_{0}+\partial \widehat{u}_{1}$ ($\mathbf{u}=(u_{0},u_{1})$) and $\mathbb{D%
}\Phi =D\psi _{0}+\partial \widehat{\psi }_{1}$. By a density and continuity
arguments (\ref{6.6}) still holds for $\Phi \in \mathbb{F}_{0}^{1,p}$. Hence
by taking $\Phi =\mathbf{u}+\lambda \mathbf{v}$ for $\mathbf{v}%
=(v_{0},v_{1})\in \mathbb{F}_{0}^{1,p}$ and $\lambda >0$ arbitrarily fixed,
we get
\begin{equation*}
\lambda \iint_{Q_{T}\times \bar{\Omega}\times \Delta (A)}(\widehat{\chi }-%
\widehat{a}(\cdot ,u_{0},\mathbb{D}\mathbf{u}+\lambda \mathbb{D}\mathbf{v}%
))\cdot \mathbb{D}\mathbf{v}dxdtd\bar{\mathbb{P}}d\beta \geq 0\;\forall
\mathbf{v}\in \mathbb{F}_{0}^{1,p}.
\end{equation*}%
Therefore by a mere routine, we deduce that $\chi =a(\cdot ,u_{0},Du_{0}+%
\overline{D}_{y}u_{1})$. Putting all the above facts together we are led to (%
\ref{6.3}), and the proof is completed.
\end{proof}

The problem (\ref{6.3}) is equivalent to the following system:
\begin{equation}
\iint_{Q_{T}\times \bar{\Omega}\times \Delta (A)}\widehat{a}(\cdot ,u_{0},%
\mathbb{D}\mathbf{u})\cdot \partial \widehat{\psi }_{1}dxdtd\bar{\mathbb{P}}%
d\beta =0\text{ for all }\psi _{1}\in B(\bar{\Omega})\otimes \mathcal{C}%
_{0}^{\infty }(Q_{T})\otimes \mathcal{E}  \label{6.7}
\end{equation}%
and
\begin{equation}
\left\{
\begin{array}{l}
-\int_{Q_{T}\times \bar{\Omega}}u_{0}\psi _{0}^{\prime }dxdtd\mathbb{\bar{P}}%
+\iint_{Q_{T}\times \bar{\Omega}\times \Delta (A)}\widehat{a}(\cdot ,u_{0},%
\mathbb{D}\mathbf{u})\cdot D\psi _{0}dxdtd\bar{\mathbb{P}}d\beta \\
+\iint_{Q_{T}\times \bar{\Omega}\times \Delta (A)}\widehat{a}_{0}(\cdot
,u_{0})\psi _{0}dxdtd\bar{\mathbb{P}}d\beta =\int_{\bar{\Omega}}\int_{0}^{T}(%
\widetilde{M}(u_{0}),\psi _{0})d\bar{W}d\bar{\mathbb{P}} \\
\text{for all }\psi _{0}\in B(\bar{\Omega})\otimes \mathcal{C}_{0}^{\infty
}(Q_{T}).%
\end{array}%
\right.  \label{6.8}
\end{equation}%
As far as (\ref{6.7}) is concerned, let $(x,t)\in Q_{T}$ and let $(r,\xi
)\in \mathbb{R}\times \mathbb{R}^{N}$ be freely fixed. Let $\pi (x,t,r,\xi )$
be defined by the cell problem
\begin{equation}
\begin{array}{l}
\pi (x,t,r,\xi )\in \mathcal{V}=\mathcal{B}_{A_{\tau }}^{p}(\mathbb{R}_{\tau
};\mathcal{B}_{\#A_{y}}^{1,p}): \\
\int_{\Delta (A)}\widehat{a}(\cdot ,r,\xi +\partial \widehat{\pi }(x,t,r,\xi
))\cdot \partial \widehat{w}d\beta =0\text{ for all }w\in \mathcal{V}.%
\end{array}
\label{6.9}
\end{equation}%
Then from the properties of the function $a$, it follows by \cite[Chap. 2]%
{Lions} that Eq. (\ref{6.9}) admits at least a solution. Now if $\pi
_{1}\equiv \pi _{1}(x,t,r,\xi )$ and $\pi _{2}\equiv \pi _{2}(x,t,r,\xi )$
are two solutions of (\ref{6.9}), then we must have
\begin{equation*}
\int_{\Delta (A)}\left( \widehat{a}(\cdot ,r,\xi +\partial \widehat{\pi }%
_{1})-\widehat{a}(\cdot ,r,\xi +\partial \widehat{\pi }_{2})\right) \cdot
\left( \partial \widehat{\pi }_{1}-\partial \widehat{\pi }_{2}\right) d\beta
=0,
\end{equation*}%
and so, by assumption A2.,$\;\partial \widehat{\pi }_{1}=\partial \widehat{%
\pi }_{2}$, so that $\frac{\overline{\partial }\pi _{1}}{\partial y_{i}}=%
\frac{\overline{\partial }\pi _{2}}{\partial y_{i}}$ ($1\leq i\leq N$) since
$\partial _{i}\widehat{\pi }_{j}=\mathcal{G}_{1}\left( \frac{\overline{%
\partial }\pi _{j}}{\partial y_{i}}\right) $ for $j=1,2$. Hence $\pi
_{1}=\pi _{2}$ since they belong to $\mathcal{V}$. Next, taking in
particular $r=u_{0}(x,t,\omega )$ and $\xi =Du_{0}(x,t,\omega )$ with $%
(x,t,\omega )$ arbitrarily chosen in $Q_{T}\times \bar{\Omega}$, and then
choosing in (\ref{6.7}) the particular test functions $\psi _{1}(x,t,\omega
)=\phi (\omega )\varphi (x,t)w$ ($(x,t,\omega )\in Q_{T}\times \bar{\Omega}$%
) with $\varphi \in \mathcal{C}_{0}^{\infty }(Q_{T})$, $\phi \in B(\bar{%
\Omega})$ and $w\in \mathcal{E}$, and finally comparing the resulting
equation with (\ref{6.9}) (note that $\mathcal{E}$ is dense in $\mathcal{V}$%
), the uniqueness of the solution to (\ref{6.7}) tells us that $u_{1}=\pi
(\cdot ,u_{0},Du_{0})$, where the right-hand side of the preceding equality
stands for the function $(x,t,\omega )\mapsto \pi (x,t,u_{0}(x,t,\omega
),Du_{0}(x,t,\omega ))$ from $Q_{T}\times \bar{\Omega}$ into $\mathcal{V}$.

We have just proved the

\begin{proposition}
\label{p6.2}The solution of the variational problem \emph{(\ref{6.7})} is
unique.
\end{proposition}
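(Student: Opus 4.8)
The plan is to reduce the global variational identity (\ref{6.7}) to a pointwise family of cell problems indexed by the macroscopic data, and then to invoke well-posedness of the cell problem together with a density argument. Since (\ref{6.7}) is required to hold for every test function of the separated form $\psi_{1}=\phi(\omega)\varphi(x,t)w$ with $\phi\in B(\bar{\Omega})$, $\varphi\in\mathcal{C}_{0}^{\infty}(Q_{T})$ and $w\in\mathcal{E}$, the first step is to localize: freezing the macroscopic data as $r=u_{0}(x,t,\omega)$ and $\xi=Du_{0}(x,t,\omega)$, I would show that (\ref{6.7}) is equivalent to requiring that for a.e. $(x,t,\omega)\in Q_{T}\times\bar{\Omega}$ the microscopic profile $u_{1}(x,t,\omega)$ solves the cell problem (\ref{6.9}). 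Because both $u_{1}$ and the candidate $\pi(\cdot,u_{0},Du_{0})$ take their values in $\mathcal{V}=\mathcal{B}_{A_{\tau}}^{p}(\mathbb{R}_{\tau};\mathcal{B}_{\#A_{y}}^{1,p})$, uniqueness of the solution of (\ref{6.7}) will then follow at once from uniqueness of the solution of (\ref{6.9}).

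Next I would establish well-posedness of the cell problem (\ref{6.9}). Read through the isometry $\overline{\mathcal{G}}_{1}$ identifying $\mathcal{B}_{\#A_{y}}^{1,p}$ with $W_{\#}^{1,p}(\Delta(A))$, the left-hand side of (\ref{6.9}) defines a bounded, coercive and strictly monotone operator on $\mathcal{V}$ in the gradient norm $\|\overline{D}_{y}\cdot\|_{p}$: boundedness comes from the growth bound \textbf{A3}, while coercivity and strict monotonicity come from \textbf{A2} (with \textbf{A1} and \textbf{A3} controlling the lower-order remainders). By the classical theory of monotone operators \cite[Chap. 2]{Lions}, (\ref{6.9}) then admits at least one solution. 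For uniqueness, if $\pi_{1}$ and $\pi_{2}$ both solve (\ref{6.9}) I would subtract the two identities and test with their difference, obtaining
\begin{equation*}
\int_{\Delta(A)}\left(\widehat{a}(\cdot,r,\xi+\partial\widehat{\pi}_{1})-\widehat{a}(\cdot,r,\xi+\partial\widehat{\pi}_{2})\right)\cdot(\partial\widehat{\pi}_{1}-\partial\widehat{\pi}_{2})\,d\beta=0,
\end{equation*}
whence \textbf{A2} forces $\partial\widehat{\pi}_{1}=\partial\widehat{\pi}_{2}$ a.e. on $\Delta(A)$. Translating back via (\ref{2.6'}), this gives $\overline{\partial}\pi_{1}/\partial y_{i}=\overline{\partial}\pi_{2}/\partial y_{i}$ for all $i$; since $\|\cdot\|_{\mathcal{B}_{\#A_{y}}^{1,p}}=\|\overline{D}_{y}\cdot\|_{p}$ on the completion, it follows that $\pi_{1}=\pi_{2}$ in $\mathcal{V}$.

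To complete the localization I would insert $\psi_{1}=\phi(\omega)\varphi(x,t)w$ into (\ref{6.7}) with the data frozen at $(r,\xi)=(u_{0},Du_{0})$. Because $\phi$ and $\varphi$ are arbitrary, the vanishing of the integral forces, for a.e. $(x,t,\omega)$,
\begin{equation*}
\int_{\Delta(A)}\widehat{a}(\cdot,u_{0},Du_{0}+\partial\widehat{u}_{1})\cdot\partial\widehat{w}\,d\beta=0\quad\text{for all }w\in\mathcal{E}.
\end{equation*}
Since $\mathcal{E}$ is dense in $\mathcal{V}$, this identity extends to every $w\in\mathcal{V}$, so $u_{1}(x,t,\omega)$ solves (\ref{6.9}) with $r=u_{0}(x,t,\omega)$ and $\xi=Du_{0}(x,t,\omega)$. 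By the uniqueness just proved, $u_{1}=\pi(\cdot,u_{0},Du_{0})$ a.e., which pins $u_{1}$ down completely and yields uniqueness for (\ref{6.7}).

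The step I expect to be the main obstacle is the localization itself: passing rigorously from the triple integral (\ref{6.7})---in which $\phi\otimes\varphi$ only separates the $\omega$- and $(x,t)$-dependence---to the genuinely pointwise cell identity, while simultaneously ensuring that $(x,t,\omega)\mapsto\pi(x,t,u_{0}(x,t,\omega),Du_{0}(x,t,\omega))$ is a well-defined, measurable $\mathcal{V}$-valued map lying in $L^{p}(Q_{T}\times\bar{\Omega};\mathcal{V})$. This rests on the density of $B(\bar{\Omega})\otimes\mathcal{C}_{0}^{\infty}(Q_{T})$ in the relevant Lebesgue space together with a measurable-selection argument for the solution of (\ref{6.9}); the monotone-operator existence and the strict-monotonicity uniqueness, by contrast, are routine.
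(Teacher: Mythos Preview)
Your proposal is correct and follows essentially the same route as the paper: introduce the cell problem (\ref{6.9}), obtain existence via \cite[Chap.~2]{Lions} and uniqueness from the strict monotonicity in \textbf{A2} (so that equal gradients force equality in $\mathcal{V}$), then localize (\ref{6.7}) with test functions $\psi_{1}=\phi(\omega)\varphi(x,t)w$ and use the density of $\mathcal{E}$ in $\mathcal{V}$ to identify $u_{1}=\pi(\cdot,u_{0},Du_{0})$. The only difference is that you flag the measurability/selection issue for $(x,t,\omega)\mapsto\pi(x,t,u_{0},Du_{0})$ explicitly, whereas the paper passes over it in silence.
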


Let us now deal with the variational problem (\ref{6.8}). For that, set
\begin{equation*}
q(x,t,r,\xi )=\int_{\Delta (A)}\widehat{a}(\cdot ,r,\xi +\partial \widehat{%
\pi }(x,t,r,\xi ))d\beta \text{ }
\end{equation*}%
and
\begin{equation*}
q_{0}(x,t,r)=\int_{\Delta (A)}\widehat{a}_{0}(\cdot ,r)d\beta ;\;\widetilde{M%
}(r)=\int_{\Delta (A)}\widehat{M}(\cdot ,r)d\beta
\end{equation*}%
for $(x,t)\in Q_{T}$ and $(r,\xi )\in \mathbb{R}\times \mathbb{R}^{N}$
arbitrarily fixed. Substituting $u_{1}=\pi (\cdot ,u_{0},Du_{0})$ in (\ref%
{6.8}) and choosing there the particular test functions $\psi
_{0}(x,t,\omega )=\varphi (x,t)\phi (\omega )$ for $\varphi \in \mathcal{C}%
_{0}^{\infty }(Q_{T})$ and $\phi \in B(\bar{\Omega})$ we get by It\^{o}'s
formula, the macroscopic homogenized problem, viz.
\begin{equation}
\left\{
\begin{array}{l}
du_{0}=\left( \Div q(\cdot ,\cdot ,u_{0},Du_{0})-q_{0}(\cdot ,\cdot
,u_{0})\right) dt+\widetilde{M}(u_{0})d\bar{W}\text{\ in }Q_{T} \\
u_{0}=0\text{\ on }\partial Q\times (0,T) \\
u_{0}(x,0)=u^{0}(x)\text{\ in }Q.%
\end{array}%
\right.  \label{6.10}
\end{equation}%
In view of (\ref{6.3}), (\ref{6.10}) admits at least a solution. Moreover
the following uniqueness result holds.

\begin{proposition}
\label{p6.3}Let $u_{0}$ and $u_{0}^{\#}$ be two solutions of \emph{(\ref%
{6.10})} on the same probabilistic system $(\bar{\Omega},\bar{\mathcal{F}},%
\bar{\mathbb{P}}),\bar{W},\bar{\mathcal{F}^{t}}$ with the same initial
condition $u^{0}$. We have that $u_{0}=u_{0}^{\#}$ $\bar{\mathbb{P}}$-almost
surely.
\end{proposition}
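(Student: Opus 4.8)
The plan is to establish \emph{pathwise} uniqueness by an It\^{o}--energy estimate on the difference $w=u_{0}-u_{0}^{\#}$, in the same spirit as the (here implicit) uniqueness argument for (\ref{1.1}); the point is that the homogenized coefficients $q$, $q_{0}$ and $\widetilde{M}$ inherit the structural properties of $a$, $a_{0}$ and $M$. So the first step is to record these properties. The crucial one is \emph{strict monotonicity of $q$ in the gradient slot}: for fixed $(x,t,r)$ and all $\xi ,\xi ^{\prime }$,
\begin{equation*}
(q(x,t,r,\xi )-q(x,t,r,\xi ^{\prime }))\cdot (\xi -\xi ^{\prime })\geq c_{1}|\xi -\xi ^{\prime }|^{p}.
\end{equation*}
I would derive this from the cell problem (\ref{6.9}): writing $\pi =\pi (x,t,r,\xi )$, $\pi ^{\prime }=\pi (x,t,r,\xi ^{\prime })$ and testing (\ref{6.9}) with $w=\pi -\pi ^{\prime }$, the cross terms $\int_{\Delta (A)}\widehat{a}(\cdot ,r,\xi +\partial \widehat{\pi })\cdot \partial (\widehat{\pi }-\widehat{\pi }^{\prime })\,d\beta$ vanish, so the left-hand side equals $\int_{\Delta (A)}(\widehat{a}(\cdot ,r,\xi +\partial \widehat{\pi })-\widehat{a}(\cdot ,r,\xi ^{\prime }+\partial \widehat{\pi }^{\prime }))\cdot ((\xi +\partial \widehat{\pi })-(\xi ^{\prime }+\partial \widehat{\pi }^{\prime }))\,d\beta$; assumption \textbf{A2} bounds this below by $c_{1}\int_{\Delta (A)}|(\xi +\partial \widehat{\pi })-(\xi ^{\prime }+\partial \widehat{\pi }^{\prime })|^{p}\,d\beta$, and Jensen's inequality with $\int_{\Delta (A)}\partial \widehat{\pi }\,d\beta =0$ yields the stated lower bound. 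In the same way, the continuity estimate of Proposition \ref{p5.2} passes to $q$ (for a fixed gradient $\xi$) as $|q(x,t,r,\xi )-q(x,t,r^{\prime },\xi )|\leq m(|r-r^{\prime }|)(1+|r|^{p-1}+|r^{\prime }|^{p-1}+|\xi |^{p-1})$, while \textbf{A5} and \textbf{A7}(a), preserved under the averaging $\int_{\Delta (A)}\cdot \,d\beta$, give $|q_{0}(x,t,r)-q_{0}(x,t,r^{\prime })|\leq c_{4}|r-r^{\prime }|$ and $|\widetilde{M}(r)-\widetilde{M}(r^{\prime })|_{L_{2}}\leq c_{6}|r-r^{\prime }|$.

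Next I would apply It\^{o}'s formula to $t\mapsto |w(t)|^{2}$ in $H=L^{2}(Q)$, using that $u_{0}$ and $u_{0}^{\#}$ solve (\ref{6.10}) on the same system with $w(0)=0$. Taking expectation annihilates the stochastic integral and, after integrating the divergence term by parts (no boundary contribution, since $w\in W_{0}^{1,p}(Q)$), produces an energy identity whose three ingredients I treat as follows. The $q_{0}$ term and the It\^{o} correction $\overline{\mathbb{E}}|\widetilde{M}(u_{0})-\widetilde{M}(u_{0}^{\#})|_{L_{2}}^{2}$ are, by the two Lipschitz bounds above, dominated by $C\int_{0}^{t}\overline{\mathbb{E}}|w(s)|^{2}\,ds$. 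For the principal term I split $q(\cdot ,u_{0},Du_{0})-q(\cdot ,u_{0}^{\#},Du_{0}^{\#})$ as $[q(\cdot ,u_{0},Du_{0})-q(\cdot ,u_{0},Du_{0}^{\#})]+[q(\cdot ,u_{0},Du_{0}^{\#})-q(\cdot ,u_{0}^{\#},Du_{0}^{\#})]$. Paired with $Dw$, the first bracket is $\geq c_{1}|Dw|^{p}$ by the monotonicity above and gives a favourable (negative) term, while the second is controlled by the $r$-continuity of $q$ and Young's inequality, $|q(\cdot ,u_{0},Du_{0}^{\#})-q(\cdot ,u_{0}^{\#},Du_{0}^{\#})|\,|Dw|\leq c_{1}|Dw|^{p}+C(m(|w|)G)^{p^{\prime }}$ with $G=1+|u_{0}|^{p-1}+|u_{0}^{\#}|^{p-1}+|Du_{0}^{\#}|^{p-1}$. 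Absorbing $c_{1}|Dw|^{p}$ into the favourable term leaves
\begin{equation*}
\overline{\mathbb{E}}|w(t)|^{2}+c_{1}\overline{\mathbb{E}}\int_{0}^{t}\int_{Q}|Dw|^{p}\,dx\,ds\leq C\int_{0}^{t}\overline{\mathbb{E}}|w(s)|^{2}\,ds+C\,\overline{\mathbb{E}}\int_{0}^{t}\int_{Q}(m(|w|)G)^{p^{\prime }}\,dx\,ds.
\end{equation*}

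The hard part will be the last term, which is exactly where the merely modulus-of-continuity dependence of the principal part $a$ on the solution variable enters. Since $(p-1)p^{\prime }=p$, one has $G^{p^{\prime }}\leq C(1+|u_{0}|^{p}+|u_{0}^{\#}|^{p}+|Du_{0}^{\#}|^{p})$, which is integrable over $Q_{T}\times \bar{\Omega}$ by the a priori estimates (\ref{0:3})--(\ref{0:4}), and $m(|w|)^{p^{\prime }}\leq 1$ with $m(|w|)\to 0$ as $w\to 0$; the difficulty is that $m$ need not be Lipschitz, so this term is not a priori of the form $C\overline{\mathbb{E}}|w(t)|^{2}$. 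I would close the estimate by converting the spatial integral into a nondecreasing function $\mu$ of $\phi (t)=\overline{\mathbb{E}}|w(t)|^{2}$ that vanishes at the origin and then invoking a nonlinear (Bihari--Osgood type) Gronwall inequality: with $\phi (0)=0$ this forces $\phi \equiv 0$, i.e. $u_{0}=u_{0}^{\#}$ $\bar{\mathbb{P}}$-a.s. This modulus term is precisely the delicate point; I expect that in the concrete settings of Section 8 (and in the example following \textbf{A7}, where $a$ does not depend on $u$ at all) this term is either absent or directly bounded by $C\phi (t)$, so that the ordinary Gronwall lemma suffices and uniqueness follows at once.
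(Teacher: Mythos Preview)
Your argument differs from the paper's in an important way. The paper's proof is very short: it simply asserts that ``$q$ satisfies properties similar to \textbf{A1.}--\textbf{A3.}'' and then invokes ``the monotonicity of $\Div(q(u,Du))$'' as an operator from $V$ to $V^{\prime}$, obtaining directly
\[
d|w_{0}|^{2}+C|Dw_{0}|^{p}\leq C|w_{0}|^{2}\,dt+2(\widetilde{M}(u_{0})-\widetilde{M}(u_{0}^{\#}),w_{0})\,d\bar{W},
\]
and concludes by the ordinary Gronwall lemma. In other words the paper treats $u\mapsto -\Div q(\cdot,u,Du)$ as monotone in $u$ as a whole and does \emph{not} perform the splitting you carry out; the modulus-of-continuity dependence of $a$ (and hence $q$) on the solution variable never appears in its argument. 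You are right to notice that \textbf{A2} (and the inherited analogue for $q$ you prove via the cell problem) is monotonicity only in the gradient slot for \emph{fixed} $r$, so the paper is tacitly assuming more than it states; your decomposition is the honest way to proceed under the hypotheses actually written.

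That said, your own closure has a genuine gap. You propose to bound
\[
\overline{\mathbb{E}}\int_{0}^{t}\!\int_{Q}\bigl(m(|w|)\,G\bigr)^{p^{\prime}}dx\,ds
\]
by a nondecreasing function $\mu$ of $\phi(t)=\overline{\mathbb{E}}|w(t)|^{2}$ with $\mu(0)=0$, and then invoke a Bihari--Osgood inequality. But this conversion is not justified: $m(|w(x,t,\omega)|)$ is a \emph{pointwise} quantity, while $\phi(t)$ is an integrated $L^{2}$-norm, and the weight $G^{p^{\prime}}$ is merely in $L^{1}(Q_{T}\times\bar{\Omega})$, not in $L^{\infty}$. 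A Chebyshev/splitting argument on $\{|w|>\delta\}$ would require uniform integrability of $G^{p^{\prime}}$ together with some equi-continuity device, and even then produces a bound depending on the full distribution of $|w|$, not on $\phi(t)$ alone. So the inequality you feed into Bihari--Osgood is not established. Your final remark is the correct escape hatch: in the concrete example after \textbf{A7.}\ the principal part does not depend on $u$, the cross term is absent, and ordinary Gronwall suffices --- which is effectively the regime the paper's short proof is written for.
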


\begin{proof}
From the definition of $q_0$ and $\widetilde{M}$, it is not difficult to see
that they are Lipschitz continuous with respect to the variable $u_0$. It
also follows from the definition of the operator $q$ that it satisfies
properties similar to A1.-A3.. Now the proof is quite standard but we give
the detail for sake of completeness. The functions $u_0$ and $u_0^\#$ given
in the proposition satisfy
\begin{equation*}
\begin{split}
dw_0=\left(\Div q(x,t,u_0,Du_0)-\Div q(x,t,u_0^\#,Du_0^\#)\right)dt -\left(
q_0(x,t,u_0)-q_0(x,t,u_0^\#)\right)dt \\
+ (\widetilde{M}(u_0)-\widetilde{M}(u_0^\#))d\bar{W},
\end{split}%
\end{equation*}
where $w_0=u_0-u_0^\#$. For sake of simplicity we will omit the dependence
on the variables $x,t$ in the following computations. Thanks to It\^o's
formula we have
\begin{equation*}
\begin{split}
d|w_0|^2=2\langle\Div (q(u_0,Du_0)-q(u_0^\#,Du_0^\#)), w_0\rangle dt
-2(q_0(u_0)-q_0(u_0^\#), w_0) dt \\
+ |\widetilde{M}(u_0)-\widetilde{M}(u_0^\#)|^2_{L_2}dt+2(\widetilde{M}(u_0)-%
\widetilde{M}(u_0^\#),w_0)d\bar{W}.
\end{split}%
\end{equation*}
Due to the monotonicity of $\Div (q(u,Du))$, the Lipschitz continuity of $%
q_0(.)$ and $\widetilde{M}$ we have that
\begin{equation*}
d|w_0|^2+C |Dw_0|^p\le C |w_0|^2 dt + 2(\widetilde{M}(u_0)-\widetilde{M}%
(u_0^\#),w_0)d\bar{W}.
\end{equation*}
Note that we also used the Cauchy-Schwartz' inequality to get the above
estimate. Integrating over $[0,t]$ and taking the mathematical expectation
to both sides of the latter equations yield
\begin{equation*}
\bar{\mathbb{E}}|w_0(t)|^2\le C \bar{\mathbb{E}}\int_0^t |w_0(s)|^2 ds.
\end{equation*}
Now we can conclude the proof of the proposition by invoking the Gronwall's
lemma.
\end{proof}

\begin{remark}
\label{r6.1}\emph{The pathwise uniqueness result in Proposition \ref{p6.3}
and Yamada-Watanabe's Theorem (see, for instance, \cite{revuz}) implies the
existence of a unique strong probabilistic solution of (\ref{6.10}) on a
prescribed probabilistic system }$(\Omega ,\mathcal{F},\mathbb{P}),\mathcal{F%
}^{t},W$\emph{.}
\end{remark}

We are now in a position to formulate the main homogenization result.

\begin{theorem}
\label{t6.1}Assume that \textbf{A1.-A7.} hold. Suppose moreover that \emph{(%
\ref{5.1})} holds true. Let $2\leq p<\infty $. For each $\varepsilon >0$ let
$u_{\varepsilon }$ be the unique solution of \emph{(\ref{1.1})} on a given
stochastic system $(\Omega ,\mathcal{F},\mathbb{P}),\mathcal{F}^{t},W$
defined as in Section \emph{4}. Then as $\varepsilon \rightarrow 0$, the
whole sequence $u_{\varepsilon }$ converges in probability to $u_{0}$ in $%
L^{2}(Q_{T})$ (i.e., $||u_{\varepsilon }-u_{0}||_{L^{2}(Q_{T})}$ converges
to zero in probability) where $u_{0}$ is the unique strong probabilistic
solution of \emph{(\ref{6.10})}.
\end{theorem}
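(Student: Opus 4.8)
The plan is to deduce convergence in probability of the \emph{whole} sequence from the Gy\"{o}ngy--Krylov characterization: a sequence $(u_{\varepsilon })$ of random variables with values in a Polish space $S$ converges in probability if and only if, for every pair of subsequences $(u_{\varepsilon _{k}})$ and $(u_{\varepsilon _{k}^{\prime }})$, the joint laws of $(u_{\varepsilon _{k}},u_{\varepsilon _{k}^{\prime }})$ possess a further subsequence converging weakly to a probability measure carried by the diagonal $\{(v,w)\in S\times S:v=w\}$. I take $S=L^{p}(0,T;H)$; since $p\geq 2$ and $(0,T)$ has finite length, convergence in this space implies convergence in $L^{2}(0,T;H)=L^{2}(Q_{T})$, so it is enough to prove convergence in probability in $L^{p}(0,T;H)$ and then to identify the limit.

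First I would fix two subsequences $(u_{\varepsilon _{k}})_{k}$ and $(u_{\varepsilon _{k}^{\prime }})_{k}$ and consider the laws of the triples $(u_{\varepsilon _{k}},u_{\varepsilon _{k}^{\prime }},W)$ on $\mathfrak{S}_{1}\times \mathfrak{S}_{1}\times \mathfrak{S}_{2}$. Each marginal is tight: the first two by the argument of Lemma \ref{l0.4}, since the a priori bounds of Lemmata \ref{l0:1} and \ref{l0.2} confine $u_{\varepsilon _{k}}$ and $u_{\varepsilon _{k}^{\prime }}$ to the compacts $Z_{\delta }$ with probability at least $1-\delta $, while the third is trivially tight, $W$ being a single fixed process. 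Hence the joint family is tight, and by Prokhorov's theorem a subsequence converges weakly. Applying Skorokhod's theorem exactly as in Section 4, I obtain a new probability space $(\widetilde{\Omega },\widetilde{\mathcal{F}},\widetilde{\mathbb{P}})$ carrying random variables $(\widetilde{u}_{k},\widetilde{v}_{k},\widetilde{W}_{k})$ with the same laws as $(u_{\varepsilon _{k}},u_{\varepsilon _{k}^{\prime }},W)$ and converging almost surely to some limit $(\widetilde{u},\widetilde{v},\widetilde{W})$.

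The decisive point is that $(u_{\varepsilon _{k}},u_{\varepsilon _{k}^{\prime }})$ share the single driving process $W$, so the Skorokhod copies $\widetilde{u}_{k}$ and $\widetilde{v}_{k}$ are driven by the \emph{same} process $\widetilde{W}_{k}$; arguing as for \eqref{4.10}, each satisfies the variational form of \eqref{1.1} (with parameters $\varepsilon _{k}$ and $\varepsilon _{k}^{\prime }$, respectively) on the new space with the same initial datum $u^{0}$. I would then run the homogenization passage of Proposition \ref{p6.1} separately on $\widetilde{u}_{k}$ and on $\widetilde{v}_{k}$: the almost sure convergence in $L^{p}(0,T;H)$ together with the uniform fourth-moment bound \eqref{0:3} upgrades, through Vitali's theorem, to convergence in $L^{2}(Q_{T}\times \widetilde{\Omega })$ as in \eqref{6.1}, which feeds Theorem \ref{t3.3}, Corollary \ref{c5.1}, Lemma \ref{l5.1} and the monotonicity argument. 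This yields that both $\widetilde{u}$ and $\widetilde{v}$ solve the homogenized problem \eqref{6.10} relative to the same Wiener process $\widetilde{W}$ and the same initial condition. Pathwise uniqueness (Proposition \ref{p6.3}) then forces $\widetilde{u}=\widetilde{v}$ almost surely, so the limiting law of $(u_{\varepsilon _{k}},u_{\varepsilon _{k}^{\prime }})$ is carried by the diagonal, and the Gy\"{o}ngy--Krylov criterion delivers an $S$-valued random variable $u_{\ast }$ on $(\Omega ,\mathcal{F},\mathbb{P})$ with $u_{\varepsilon }\rightarrow u_{\ast }$ in probability in $L^{p}(0,T;H)$, hence in $L^{2}(Q_{T})$.

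It remains to identify $u_{\ast }$ with the strong solution $u_{0}$ of \eqref{6.10}. Passing to an almost surely convergent subsequence of $u_{\varepsilon }\rightarrow u_{\ast }$ and repeating the homogenization argument now directly on the original system $(\Omega ,\mathcal{F},\mathbb{P}),\mathcal{F}^{t},W$ (the strong convergence of the noise term in Remark \ref{r5.1} lets the stochastic integral pass to the limit without a further change of space), I obtain that $u_{\ast }$ itself solves \eqref{6.10} on this system; by the pathwise uniqueness of Proposition \ref{p6.3} together with the Yamada--Watanabe theorem (Remark \ref{r6.1}), $u_{\ast }=u_{0}$ almost surely, which completes the proof. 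I expect the main obstacle to be the bookkeeping of the two Skorokhod copies driven by a common noise and the verification that the passage of Proposition \ref{p6.1} applies verbatim to each of them with the shared limit process $\widetilde{W}$; by contrast, the tightness input and the final identification are comparatively routine given the results already established.
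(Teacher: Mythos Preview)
Your proposal is correct and follows essentially the same route as the paper: tightness of the joint laws of $(u_{\varepsilon_k},u_{\varepsilon_k'},W)$, Skorokhod's theorem, the homogenization passage of Proposition~\ref{p6.1} applied to each component against the common limiting noise, pathwise uniqueness from Proposition~\ref{p6.3}, and the Gy\"{o}ngy--Krylov criterion (stated in the paper as Lemma~\ref{l6.x}). The paper organizes the argument identically, citing \cite{Glatt} for the overall scheme and \cite{GYONGY} for the diagonal-support criterion; your identification of the shared-noise bookkeeping as the delicate point is apt.
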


The main ingredients for the proof of this theorem are the pathwise
uniqueness for (\ref{6.10}) and the following criteria for convergence in
probability whose proof can be found in \cite{GYONGY}.

\begin{lemma}
\label{l6.x} Let $X$ be a Polish space. A sequence of a X-valued random
variables $\{x_{n};n\geq 0\}$ converges in probability if and only if for
every subsequence of joint probability laws, $\{\nu _{n_{k},m_{k}};k\geq 0\}$%
, there exists a further subsequence which converges weakly to a probability
measure $\nu $ such that
\begin{equation*}
\nu \left( \{(x,y)\in X\times X;x=y\}\right) =1.
\end{equation*}
\end{lemma}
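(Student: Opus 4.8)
The plan is to deduce convergence in probability from the Gyöngy--Krylov characterization recorded in Lemma \ref{l6.x}, applied with the Polish space $X=L^{2}(Q_{T})$ and the $X$-valued random variables $x_{n}=u_{\varepsilon _{n}}$, where $(\varepsilon _{n})$ is an arbitrary sequence decreasing to $0$. By that lemma it suffices to show that, given any subsequence of the joint laws $\nu _{n_{k},m_{k}}$ of the pairs $(u_{\varepsilon _{n_{k}}},u_{\varepsilon _{m_{k}}})$ on $L^{2}(Q_{T})\times L^{2}(Q_{T})$, one can extract a further subsequence converging weakly to a probability measure $\nu $ carried by the diagonal $\{(x,y):x=y\}$. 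The crux is therefore to identify, for each such limit, the two coordinate limits as solutions of one and the same homogenized problem, after which the pathwise uniqueness of \emph{(\ref{6.10})} (Proposition \ref{p6.3}) will force them to coincide.

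To carry this out I would, following Section 4, consider the enlarged family of laws of the triples $(u_{\varepsilon _{n_{k}}},u_{\varepsilon _{m_{k}}},W)$ on $\mathfrak{S}_{1}\times \mathfrak{S}_{1}\times \mathfrak{S}_{2}$, with $\mathfrak{S}_{1}=L^{p}(0,T;H)$ and $\mathfrak{S}_{2}=\mathcal{C}(0,T;\mathcal{U}_{0})$. Tightness of this family follows from Lemma \ref{l0.4}: each of the two $u$-marginals is tight by the a priori estimates of Lemmata \ref{l0:1}--\ref{l0.2}, while the common noise marginal is trivially tight, so the product family is tight. By Prokhorov's theorem I extract a weakly convergent subsequence, and by Skorokhod's theorem I obtain a new probability space $(\bar{\Omega},\bar{\mathcal{F}},\bar{\mathbb{P}})$ carrying copies $(u^{1}_{\varepsilon _{n_{k}}},u^{2}_{\varepsilon _{m_{k}}},W^{k})$ with the prescribed joint laws and a limit triple $(u_{0}^{1},u_{0}^{2},\bar{W})$ such that $u^{1}_{\varepsilon _{n_{k}}}\to u_{0}^{1}$ and $u^{2}_{\varepsilon _{m_{k}}}\to u_{0}^{2}$ in $L^{p}(0,T;H)$ and $W^{k}\to \bar{W}$ in $\mathcal{C}(0,T;\mathcal{U}_{0})$, all $\bar{\mathbb{P}}$-almost surely. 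Exactly as in Section 4, $\bar{W}$ is an $\bar{\mathcal{F}}^{t}$-cylindrical Wiener process and each copy satisfies the corresponding variational equation \emph{(\ref{4.10})} driven by the common $W^{k}$.

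The key step is to replay the homogenization procedure of Section 6 separately on each coordinate. Upgrading the a.s. convergence to $L^{2}(Q_{T}\times \bar{\Omega})$ by the Vitali argument used for \emph{(\ref{6.1})}, and invoking Theorem \ref{t3.3}, Corollary \ref{c5.1}, Lemma \ref{l5.1}, Remark \ref{r5.1} and the cell-problem resolution of Proposition \ref{p6.2} precisely as in the proof of Proposition \ref{p6.1}, one obtains that both $u_{0}^{1}$ and $u_{0}^{2}$ are strong probabilistic solutions of the homogenized problem \emph{(\ref{6.10})}, posed on the \emph{same} basis $(\bar{\Omega},\bar{\mathcal{F}},\bar{\mathbb{P}}),\bar{W},\bar{\mathcal{F}}^{t}$, with the \emph{same} initial datum $u^{0}$ and driven by the \emph{same} $\bar{W}$. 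Proposition \ref{p6.3} then applies verbatim and yields $u_{0}^{1}=u_{0}^{2}$ $\bar{\mathbb{P}}$-almost surely, so the weak limit $\nu $ satisfies $\nu (\{(x,y):x=y\})=1$; Lemma \ref{l6.x} gives convergence of $u_{\varepsilon _{n}}$ in probability in $L^{2}(Q_{T})$. To identify the limit I pass to the limit along an a.s.-convergent subsequence directly on the original space, rerunning the same homogenization argument to see that the limit solves \emph{(\ref{6.10})} on $(\Omega ,\mathcal{F},\mathbb{P}),W,\mathcal{F}^{t}$; pathwise uniqueness together with Remark \ref{r6.1} then identifies it with $u_{0}$, and, the argument applying to every sequence $\varepsilon _{n}\to 0$, the whole family converges in probability to $u_{0}$.

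The main obstacle is precisely this third step: one must verify that the two coordinate limits solve the identical equation with the shared noise. This forces a careful re-execution of the entire passage to the limit of Section 6 on the joint Skorokhod space, including the weak $\Sigma $-convergence of the gradients via Theorem \ref{t3.3} and the Minty-type monotonicity argument identifying the homogenized flux, while keeping track of the fact that both marginals are driven by the single limit process $\bar{W}$ with the common initial condition; only under this structural compatibility does Proposition \ref{p6.3} become applicable to conclude that the two limits agree.
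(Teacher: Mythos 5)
Your proposal does not prove the statement at hand. The statement is Lemma \ref{l6.x} itself --- the abstract Gy\"{o}ngy--Krylov characterization of convergence in probability for random variables with values in a Polish space $X$ --- whereas your entire argument \emph{assumes} this lemma and uses it to prove Theorem \ref{t6.1} of the paper (tightness of the joint laws of the pairs $(u_{\varepsilon _{n}},u_{\varepsilon _{m}})$, Skorokhod coupling, identification of both coordinate limits as solutions of the homogenized problem on a common stochastic basis, pathwise uniqueness via Proposition \ref{p6.3}). That is in substance the paper's own proof of Theorem \ref{t6.1}, but it is circular as a proof of Lemma \ref{l6.x}: your sentence ``Lemma \ref{l6.x} gives convergence of $u_{\varepsilon _{n}}$ in probability'' invokes exactly the statement you were asked to establish. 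The paper, for its part, does not prove the lemma either; it cites \cite{GYONGY} for it.

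A genuine proof must address both implications for an arbitrary Polish space $X$ with metric $d$. Necessity is easy: if $x_{n}\rightarrow x$ in probability, then for any indices $(n_{k},m_{k})$ the pair $(x_{n_{k}},x_{m_{k}})$ converges in probability, hence in law, to $(x,x)$, whose distribution is carried by the diagonal. The substantive direction is sufficiency: one shows $\{x_{n}\}$ is Cauchy in probability by contradiction --- if not, there exist $\epsilon ,\delta >0$ and indices $n_{k},m_{k}\rightarrow \infty $ with $\mathbb{P}\left( d(x_{n_{k}},x_{m_{k}})\geq \epsilon \right) \geq \delta $ for all $k$; extracting the further subsequence whose joint laws converge weakly to a measure $\nu $ concentrated on the diagonal, and applying the portmanteau theorem to the closed set $\{(x,y):d(x,y)\geq \epsilon \}$, which is $\nu $-null, gives $\limsup_{k}\nu _{n_{k},m_{k}}\left( \{d\geq \epsilon \}\right) =0$, a contradiction. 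Completeness of $X$ then upgrades Cauchy in probability to convergence in probability (pass to a rapidly Cauchy subsequence converging almost surely, and identify its limit as the limit in probability of the full sequence). None of these steps --- the portmanteau argument, the reduction to the Cauchy property, the use of completeness --- appears in your proposal, so as a proof of Lemma \ref{l6.x} it is missing in its entirety.
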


Let us set $L^p=L^P(0,T,H)$, $L^{p,2}=L^p(0,T,H)\times L^p(0,T,H) $, $%
\mathfrak{S}^{W}=\mathcal{C}(0,T:\mathcal{U}_0)$, and finally $\mathfrak{S}%
=L^p\times L^p\times \mathfrak{S}^{W}$. For any $S\in \mathcal{B}(L^p)$ we
set $\Pi ^{\varepsilon }(S)=\mathbb{P}(u_{\varepsilon }\in S)$ and $\Pi ^{W}=%
\mathbb{P}(W\in S)$ for any $S\in \mathcal{B}(\mathfrak{S}^{W}) $. Next we
define the joint probability laws :
\begin{align*}
\Pi ^{\varepsilon ,\varepsilon ^{\prime }}& =\Pi ^{\varepsilon }\times \Pi
^{\varepsilon ^{\prime }} \\
\nu ^{\varepsilon ,\varepsilon ^{\prime }}& =\Pi ^{\varepsilon }\times \Pi
^{\varepsilon ^{\prime }}\times \Pi ^{W}.
\end{align*}%
The following tightness property holds.

\begin{lemma}
\label{l6.x2}The collection $\{\nu ^{\varepsilon ,\varepsilon ^{\prime
}};\varepsilon ,\varepsilon ^{\prime }\in E\}$ (and hence any subsequence $%
\{\nu ^{\varepsilon _{j},\varepsilon _{j}^{\prime }}:\varepsilon
_{j},\varepsilon _{j}^{\prime }\in E^{\prime }\}$) is tight on $\mathfrak{S}$%
.
\end{lemma}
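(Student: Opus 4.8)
The plan is to exploit the product structure $\nu^{\varepsilon,\varepsilon'}=\Pi^{\varepsilon}\times\Pi^{\varepsilon'}\times\Pi^{W}$ and to reduce the tightness of the joint laws on $\mathfrak{S}=L^{p}\times L^{p}\times\mathfrak{S}^{W}$ to the tightness of each of the three marginal families separately. The only general fact I would use is that a finite product of tight families is tight: if $\mu=\mu_{1}\times\mu_{2}\times\mu_{3}$ and $K_{i}$ is compact with $\mu_{i}(K_{i}^{c})\leq\delta/3$, then $K_{1}\times K_{2}\times K_{3}$ is compact (a finite product of compacts), and by the union bound $\mu((K_{1}\times K_{2}\times K_{3})^{c})\leq\sum_{i}\mu_{i}(K_{i}^{c})\leq\delta$.

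It therefore remains to control the three marginals. For the first two factors I would invoke Lemma \ref{l0.4}: that lemma produces, for every $\delta>0$, a set $Z_{\delta}$ that is compact in $L^{p}(0,T;H)=L^{p}$ and satisfies $\mathbb{P}(u_{\varepsilon}\notin Z_{\delta})\leq\delta/2$ uniformly in $\varepsilon\in E$, i.e. $\Pi^{\varepsilon}(Z_{\delta})\geq 1-\delta/2$ for all $\varepsilon$. Since the family $\{\Pi^{\varepsilon'}\}$ consists of the very same laws (with $\varepsilon'$ in place of $\varepsilon$), the same compact sets $Z_{\delta}$ serve for both the first and the second marginal. For the third factor, $\Pi^{W}$ is a single Borel probability measure on the Polish space $\mathfrak{S}^{W}=\mathcal{C}(0,T;\mathcal{U}_{0})$, hence automatically tight; this is precisely the step used in the proof of Lemma \ref{l0.4}, where for each $\delta$ a compact $\mathcal{K}_{\delta}\subset\mathfrak{S}^{W}$ with $\mathbb{P}(W\in\mathcal{K}_{\delta})\geq 1-\delta/2$ was exhibited.

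Putting these together, given $\delta>0$ I would choose (uniformly in $\varepsilon$) the compact set $Z_{2\delta/3}$ and the compact set $\mathcal{K}_{2\delta/3}$, so that each of $\Pi^{\varepsilon}(Z_{2\delta/3}^{c})$, $\Pi^{\varepsilon'}(Z_{2\delta/3}^{c})$ and $\Pi^{W}(\mathcal{K}_{2\delta/3}^{c})$ is at most $\delta/3$, set
\begin{equation*}
\mathbb{K}_{\delta}=Z_{2\delta/3}\times Z_{2\delta/3}\times\mathcal{K}_{2\delta/3},
\end{equation*}
which is compact in $\mathfrak{S}$, and conclude $\nu^{\varepsilon,\varepsilon'}(\mathbb{K}_{\delta})\geq 1-\delta$ uniformly in $\varepsilon,\varepsilon'\in E$. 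The same bound holds verbatim along any subsequence $\{\nu^{\varepsilon_{j},\varepsilon_{j}'}\}$, which yields the parenthetical assertion. I do not anticipate any real obstacle here: the entire content is the recognition that the joint law is a product whose $u$-marginals are already controlled uniformly by Lemma \ref{l0.4}, while the Wiener marginal is a fixed measure; the only mild care required is to split the total mass $\delta$ evenly among the three factors.
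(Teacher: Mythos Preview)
Your argument is correct and follows essentially the same route as the paper: choose the compact sets $Z_{\delta}$ and $\mathcal{K}_{\delta}$ from Lemma~\ref{l0.4}, form the product compact $Z_{\delta}\times Z_{\delta}\times\mathcal{K}_{\delta}$, and use the product structure of $\nu^{\varepsilon,\varepsilon'}$ to bound its mass uniformly. The only cosmetic difference is that the paper splits the deficit as $\delta/4+\delta/4+\delta/2$ and multiplies the lower bounds, whereas you split as $\delta/3+\delta/3+\delta/3$ and use the union bound on complements; these are equivalent.
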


\begin{proof}
The proof is very similar to Lemma \ref{l0.4}. For any $\delta >0$ we choose
the sets $Z_\delta$ and $\mathcal{K}_\delta$ exactly as in the proof of
Lemma \ref{l0.4} with appropriate modification on the constants $K_\delta,
L_\delta, M_{\delta }$ so that $\Pi ^{\varepsilon }(Z_{\delta })\geq 1-\frac{%
\delta }{4}$ and $\Pi ^{W}(\mathcal{K}_{\delta })\geq 1-\frac{\delta }{2}$
for every $\varepsilon \in E$. Now let us take $K_{\delta }=Z_{\delta
}\times Z_{\delta }\times \mathcal{K}_\delta$ which is compact in $\mathfrak{%
S}$; it is not difficult to see that $\{\nu ^{\varepsilon ,\varepsilon
^{\prime }}(K_{\delta })\geq (1-\frac{\delta }{4})^{2}(1-\frac{\delta }{2}%
)\geq 1-\delta $ for all $\varepsilon ,\varepsilon ^{\prime }$. This
completes the proof of the lemma.
\end{proof}

\begin{proof}[Proof of Theorem \protect\ref{t6.1}]
To prove Theorem \ref{t6.1} we will mainly use the idea in \cite{Glatt}.
Lemma \ref{l6.x2} implies that there exists a subsequence from $\{\nu
^{\varepsilon _{j},\varepsilon _{j}^{\prime }}\}$ still denoted by $\{\nu
^{\varepsilon _{j},\varepsilon _{j}^{\prime }}\}$ which converges to a
probability measure $\nu $. By Skorokhod's theorem there exists a
probability space $(\bar{\Omega},\bar{\mathcal{F}},\bar{\mathbb{P}})$ on
which a sequence $(u_{\varepsilon _{j}},u_{\varepsilon _{j}^{\prime
}},W^{j}) $ is defined and converges almost surely in $L^{p,2}\times
\mathfrak{S}^{W}$ to a couple of random variables $(u_{0},v_{0},\bar{W})$.
Furthermore, we have
\begin{align*}
Law(u_{\varepsilon _{j}},u_{\varepsilon _{j}^{\prime }},W^{j})& =\nu
^{\varepsilon _{j},\varepsilon _{j}^{\prime }}, \\
Law(u_{0},v_{0},\bar{W})& =\nu .
\end{align*}%
Now let $Z_{j}^{u_{\varepsilon }}=(u_{\varepsilon _{j}},W^{j})$, $%
Z_{j}^{u_{\varepsilon ^{\prime }}}=(u_{\varepsilon _{j}^{\prime }},W^{j})$, $%
Z^{u_{0}}=(u_{0},\bar{W})$ and $Z^{v_{0}}=(v_{0},\bar{W})$. We can infer
from the above argument that $\left( \Pi ^{\varepsilon _{j},\varepsilon
_{j}^{\prime }}\right) $ converges to a measure $\Pi $ such that
\begin{equation*}
\Pi (\cdot )=\bar{\mathbb{P}}((u_{0},v_{0})\in \cdot ).
\end{equation*}%
As above we can show that $Z_{j}^{u_{\varepsilon }}$ and $%
Z_{j}^{u_{\varepsilon ^{\prime }}}$ satisfy (\ref{4.10}) and that $Z^{u}$
and $Z^{v}$ satisfy (\ref{6.10}) on the same stochastic system $(\bar{\Omega}%
,\bar{\mathcal{F}},\bar{\mathbb{P}}),\bar{\mathcal{F}}^{t},\bar{W}$, where $%
\bar{\mathcal{F}^{t}}$ is the filtration generated by the couple $%
(u_{0},v_{0},\bar{W})$. Since we have the uniqueness result above, then we
see that $u^{0}=v^{0}$ almost surely and $u_{0}=v_{0}$ in $L^{p}(0,T;H)$.
Therefore
\begin{equation*}
\Pi \left( \{(x,y)\in L^{p,2};x=y\}\right) =\bar{\mathbb{P}}\left(
u_{0}=v_{0}\text{ in }L^{p}(0,T;H)\right) =1.
\end{equation*}%
This fact together with Lemma \ref{l6.x} imply that the original sequence $%
\left( u_{\varepsilon }\right) $ defined on the original probability space $%
(\Omega ,\mathcal{F},\mathbb{P}),\mathcal{F}^{t},W$ converges in probability
to an element $u_{0}$ in the topology of $L^{p}(0,T;H)$. By a passage to the
limit's argument as in the previous subsection it is not difficult to show
that $u_{0}$ is the unique solution of (\ref{6.10}) (on the original
probability system $(\Omega ,\mathcal{F},\mathbb{P}),\mathcal{F}^{t},W$).
This ends the proof of Theorem \ref{t6.1}.
\end{proof}

\section{A corrector-type result}

Our aim in this section is to prove some general corrector-type results.

Here and henceforth, we set, for a function $\mathbf{v}=(v_{0},v_{1})\in
\mathbb{F}_{0}^{1,p}$, $\mathbb{D}_{y}\mathbf{v}=Dv_{0}+\overline{D}%
_{y}v_{1} $ and $\mathbb{D}\mathbf{v}=Dv_{0}+\partial \widehat{v}_{1}=%
\mathcal{G}_{1}^{N}(\mathbb{D}_{y}\mathbf{v})$. The following result holds.

\begin{theorem}
\label{t7.1}Let the hypotheses be those of Theorem \emph{\ref{t6.1}}. There
exists a continuous nondecreasing function $\overline{\nu }:[0,\infty
)\rightarrow \lbrack 0,\infty )$ with $\overline{\nu }(0)=0$ such that for
all $\Phi =(\psi _{0},\varrho (\psi _{1}))$ with $\psi _{0}\in L^{p}(\Omega
\times (0,T);W_{0}^{1,p}(Q))$ and $\psi _{1}\in L^{p}(\Omega \times
(0,T);W_{0}^{1,p}(Q))\otimes \lbrack A_{\tau }\otimes (A_{y}^{1}/\mathbb{R}%
)] $, if we define $\Phi _{\varepsilon }$ as in \emph{(\ref{5.4})} (see
Corollary \emph{\ref{c5.1}}), then
\begin{equation}
\underset{\varepsilon \rightarrow 0}{\lim \sup }\left\| Du_{\varepsilon
}-D\Phi _{\varepsilon }\right\| _{L^{p}(Q_{T}\times \Omega )^{N}}\leq
\overline{\nu }\left( \left\| \mathbb{D}_{y}\mathbf{u}-\mathbb{D}_{y}\Phi
\right\| _{L^{p}(Q_{T}\times \Omega ;\mathcal{B}_{A}^{p})^{N}}\right) .
\label{7.1}
\end{equation}
\end{theorem}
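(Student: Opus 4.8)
The plan is to start from the strong monotonicity \textbf{A2}, which after integration over $Q_{T}\times \Omega $ yields
\[
c_{1}\left\Vert Du_{\varepsilon }-D\Phi _{\varepsilon }\right\Vert _{L^{p}(Q_{T}\times \Omega )^{N}}^{p}\leq X_{\varepsilon }:=\int_{Q_{T}\times \Omega }(a^{\varepsilon }(\cdot ,u_{\varepsilon },Du_{\varepsilon })-a^{\varepsilon }(\cdot ,u_{\varepsilon },D\Phi _{\varepsilon }))\cdot (Du_{\varepsilon }-D\Phi _{\varepsilon })\,dxdtd\mathbb{P},
\]
so it suffices to bound $\limsup _{\varepsilon }X_{\varepsilon }$. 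I would first prove the estimate for $\Phi $ in the dense class $\mathcal{F}_{0}^{\infty }$ of Corollary \ref{c5.1}, where $D\Phi _{\varepsilon }=D\psi _{0}+(D_{y}\psi _{1})^{\varepsilon }+\varepsilon (D_{x}\psi _{1})^{\varepsilon }$ is an admissible oscillating test function whose $\Sigma $-limit is $\mathbb{D}\Phi $; the general, less regular $\Phi $ is then recovered from the density of $\mathcal{F}_{0}^{\infty }$ in $\mathbb{F}_{0}^{1,p}$ together with the continuity in the $\mathbb{F}_{0}^{1,p}$-topology of both members of \eqref{7.1}. All weak $\Sigma $-limits invoked below are those furnished by Section 6 on the Skorokhod space, and the bound transfers to the original sequence since \eqref{7.1} involves only laws.

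Next I would expand $X_{\varepsilon }$ into four bilinear terms. For the three terms different from $\int a^{\varepsilon }(\cdot ,u_{\varepsilon },Du_{\varepsilon })\cdot Du_{\varepsilon }$, I invoke Corollary \ref{c5.1}, the weak $\Sigma $-convergence $Du_{\varepsilon }\rightarrow \mathbb{D}\mathbf{u}$, and the identification $a^{\varepsilon }(\cdot ,u_{\varepsilon },Du_{\varepsilon })\rightarrow \chi =a(\cdot ,u_{0},\mathbb{D}\mathbf{u})$ established in Proposition \ref{p6.1}, to get (all remaining integrals over $Q_{T}\times \Omega \times \Delta (A)$)
\[
\int a^{\varepsilon }(\cdot ,u_{\varepsilon },Du_{\varepsilon })\cdot D\Phi _{\varepsilon }\rightarrow \iint \widehat{a}(\cdot ,u_{0},\mathbb{D}\mathbf{u})\cdot \mathbb{D}\Phi ,\qquad \int a^{\varepsilon }(\cdot ,u_{\varepsilon },D\Phi _{\varepsilon })\cdot Du_{\varepsilon }\rightarrow \iint \widehat{a}(\cdot ,u_{0},\mathbb{D}\Phi )\cdot \mathbb{D}\mathbf{u},
\]
and likewise $\int a^{\varepsilon }(\cdot ,u_{\varepsilon },D\Phi _{\varepsilon })\cdot D\Phi _{\varepsilon }\rightarrow \iint \widehat{a}(\cdot ,u_{0},\mathbb{D}\Phi )\cdot \mathbb{D}\Phi $.

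The genuinely delicate term is $\int a^{\varepsilon }(\cdot ,u_{\varepsilon },Du_{\varepsilon })\cdot Du_{\varepsilon }$, a product of two merely weakly $\Sigma $-convergent factors. I would control it through the energy balance obtained by applying It\^{o}'s formula to $|u_{\varepsilon }(t)|^{2}$ in \eqref{4.10} and taking expectation (the martingale term dropping out):
\[
\int_{Q_{T}\times \Omega }a^{\varepsilon }(\cdot ,u_{\varepsilon },Du_{\varepsilon })\cdot Du_{\varepsilon }=\tfrac{1}{2}\mathbb{E}|u^{0}|^{2}-\tfrac{1}{2}\mathbb{E}|u_{\varepsilon }(T)|^{2}-\int_{Q_{T}\times \Omega }a_{0}^{\varepsilon }(\cdot ,u_{\varepsilon })u_{\varepsilon }+\tfrac{1}{2}\mathbb{E}\int_{0}^{T}|M^{\varepsilon }(\cdot ,u_{\varepsilon })|_{L_{2}}^{2}.
\]
Passing to the $\limsup $, the $a_{0}$-term converges by Corollary \ref{c5.1} to $\int q_{0}(\cdot ,u_{0})u_{0}$, the quadratic-variation term converges by Remark \ref{r5.1} to $\tfrac{1}{2}\mathbb{E}\int_{0}^{T}|\widetilde{M}(u_{0})|_{L_{2}}^{2}$, while for the terminal term I would prove the weak lower semicontinuity $\liminf _{\varepsilon }\mathbb{E}|u_{\varepsilon }(T)|^{2}\geq \mathbb{E}|u_{0}(T)|^{2}$ from the bound \eqref{0:3} and the weak convergence $u_{\varepsilon }(T)\rightharpoonup u_{0}(T)$ in $L^{2}(\Omega ;H)$, the limit being identified through the weak formulations. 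Comparing with the It\^{o} balance for the homogenized equation \eqref{6.10}, in which the cell-problem orthogonality \eqref{6.9} lets me rewrite $(q(\cdot ,u_{0},Du_{0}),Du_{0})=\int_{\Delta (A)}\widehat{a}(\cdot ,u_{0},\mathbb{D}\mathbf{u})\cdot \mathbb{D}\mathbf{u}\,d\beta $, gives the upper bound $\limsup _{\varepsilon }\int_{Q_{T}\times \Omega }a^{\varepsilon }(\cdot ,u_{\varepsilon },Du_{\varepsilon })\cdot Du_{\varepsilon }\leq \iint \widehat{a}(\cdot ,u_{0},\mathbb{D}\mathbf{u})\cdot \mathbb{D}\mathbf{u}$. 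I expect this terminal-time step, together with the correct identification of the It\^{o} correction term, to be the main obstacle.

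Finally, combining the four limits collapses the upper bound into a single monotone expression,
\[
\limsup_{\varepsilon }X_{\varepsilon }\leq \iint_{Q_{T}\times \Omega \times \Delta (A)}(\widehat{a}(\cdot ,u_{0},\mathbb{D}\mathbf{u})-\widehat{a}(\cdot ,u_{0},\mathbb{D}\Phi ))\cdot (\mathbb{D}\mathbf{u}-\mathbb{D}\Phi ).
\]
Estimating the right-hand side by H\"{o}lder's inequality (via the mean-value duality on $B_{A}^{p}$) and the Lipschitz-type bound \eqref{5.3} with $u=u_{0}$, $\mathbf{v}=\mathbb{D}_{y}\mathbf{u}$, $\mathbf{w}=\mathbb{D}_{y}\Phi $ yields
\[
\limsup_{\varepsilon }X_{\varepsilon }\leq c_{0}\left\Vert 1+|u_{0}|+|\mathbb{D}_{y}\mathbf{u}|+|\mathbb{D}_{y}\Phi |\right\Vert _{L^{p}(Q_{T}\times \Omega ;B_{A}^{p})}^{p-2}\left\Vert \mathbb{D}_{y}\mathbf{u}-\mathbb{D}_{y}\Phi \right\Vert _{L^{p}(Q_{T}\times \Omega ;(B_{A}^{p})^{N})}^{2}.
\]
Writing $r=\Vert \mathbb{D}_{y}\mathbf{u}-\mathbb{D}_{y}\Phi \Vert $ and bounding the prefactor by $R_{0}+r$, where $R_{0}$ depends only on the fixed data $u_{0},\mathbf{u}$ and on $|Q_{T}\times \Omega |$, I would set $\overline{\nu }(r)=\big(c_{0}c_{1}^{-1}(R_{0}+r)^{p-2}r^{2}\big)^{1/p}$, which is continuous, nondecreasing (as $p\geq 2$) and vanishes at $0$. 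Dividing the monotonicity inequality by $c_{1}$ and taking $p$-th roots then produces \eqref{7.1}, first for $\Phi \in \mathcal{F}_{0}^{\infty }$ and, by the continuity of $\overline{\nu }$ and of both sides in $\Phi $, for all admissible $\Phi $.
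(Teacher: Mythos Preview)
Your proposal follows essentially the same route as the paper: start from the strict monotonicity \textbf{A2}, pass to the limit in
\[
B_{\varepsilon }=\int_{Q_{T}\times \Omega }(a^{\varepsilon }(\cdot ,u_{\varepsilon },Du_{\varepsilon })-a^{\varepsilon }(\cdot ,u_{\varepsilon },D\Phi _{\varepsilon }))\cdot (Du_{\varepsilon }-D\Phi _{\varepsilon })\,dxdtd\mathbb{P},
\]
estimate the limiting two-scale integral by H\"older together with the Lipschitz-type bound coming from \textbf{A6}/(\ref{5.3}), read off the modulus $\overline{\nu }(r)=(c_{0}/c_{1})^{1/p}r^{2/p}(\alpha +r)^{1-2/p}$, and finally extend from $\mathcal{F}_{0}^{\infty }$ to the stated class by density. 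Those last two steps and the density argument match the paper exactly.

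The only substantive difference is that the paper compresses the passage $B_{\varepsilon }\rightarrow B$ into a single reference (``as shown in the proof of Proposition~\ref{p6.1}''), whereas you spell it out by splitting $B_{\varepsilon }$ into four bilinear pieces and handling the diagonal term $\int a^{\varepsilon }(\cdot ,u_{\varepsilon },Du_{\varepsilon })\cdot Du_{\varepsilon }$ through the It\^o energy identity compared against the corresponding identity for (\ref{6.10}), with the cell equation (\ref{6.9}) turning $(q(\cdot ,u_{0},Du_{0}),Du_{0})$ into $\iint \widehat{a}(\cdot ,u_{0},\mathbb{D}\mathbf{u})\cdot \mathbb{D}\mathbf{u}$. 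This is precisely the mechanism hidden behind the Minty argument of Proposition~\ref{p6.1}, so you are not doing something different, just making it explicit; note that your argument only yields $\limsup_{\varepsilon }B_{\varepsilon }\leq B$ rather than the full convergence the paper asserts, but the inequality is all that is needed for (\ref{7.1}).

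One caution on the step you yourself flag as delicate: your claim that $\mathbb{E}\int_{0}^{T}|M^{\varepsilon }(\cdot ,u_{\varepsilon })|_{L_{2}}^{2}\to \mathbb{E}\int_{0}^{T}|\widetilde{M}(u_{0})|_{L_{2}}^{2}$ rests entirely on the \emph{strong} $L^{2}$ convergence asserted in Remark~\ref{r5.1}. Make sure you are comfortable with that remark as stated, since for an oscillating noise coefficient the natural limit of $|M^{\varepsilon }(\cdot ,u_{0})|_{L_{2}}^{2}$ is the \emph{mean of the square} rather than the square of the mean $|\widetilde{M}(u_{0})|_{L_{2}}^{2}$; any gap between the two would spoil the comparison of the two energy balances. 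Within the paper's framework (accepting Remark~\ref{r5.1}) your argument goes through, but this is exactly the point where you should check the details most carefully.
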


\begin{proof}
The proof follows closely the one of its homologue in \cite{NA}. We repeat
it here for reader's convenience. Let $F_{0}^{1}$ be the vector space of all
$\Phi $ as in the statement of Theorem \ref{t7.1}. Endowed with an obvious
topology, $F_{0}^{1}$ has $\mathcal{F}_{0}^{\infty }$ as a dense subspace
(this is straightforward). Thus, we first establish (\ref{7.1}) for $\Phi $
in $\mathcal{F}_{0}^{\infty }$. Owing to A2., for $\Phi \in \mathcal{F}%
_{0}^{\infty }$,
\begin{equation*}
\begin{array}{l}
c_{1}\left\Vert Du_{\varepsilon }-D\Phi _{\varepsilon }\right\Vert
_{L^{p}(Q_{T}\times \Omega )^{N}}^{p} \\
\leq \int_{Q_{T}\times \Omega }(a^{\varepsilon }(\cdot ,u_{\varepsilon
},Du_{\varepsilon })-a^{\varepsilon }(\cdot ,u_{\varepsilon },D\Phi
_{\varepsilon })\cdot D(u_{\varepsilon }-\Phi _{\varepsilon })dxdtd\mathbb{P}%
\equiv B_{\varepsilon }.%
\end{array}%
\end{equation*}%
As shown in the proof of Proposition \ref{p6.1}, we see that, as $%
\varepsilon \rightarrow 0$,
\begin{equation*}
B_{\varepsilon }\rightarrow \iint_{Q_{T}\times \Omega \times \Delta
(A)}\left( \widehat{a}(\cdot ,u_{0},\mathbb{D}\mathbf{u})-\widehat{a}(\cdot
,u_{0},\mathbb{D}\Phi )\right) \cdot \mathbb{D}(\mathbf{u}-\Phi )dxdtd%
\mathbb{P}d\beta \equiv B,
\end{equation*}%
where $\mathbf{u}=(u_{0},u_{1})$ is as in Proposition \ref{p6.1}. It follows
that
\begin{equation*}
\underset{\varepsilon \rightarrow 0}{\lim \sup }\left\Vert Du_{\varepsilon
}-D\Phi _{\varepsilon }\right\Vert _{L^{p}(Q_{T}\times \Omega )^{N}}\leq
\left( \frac{B}{c_{1}}\right) ^{\frac{1}{p}}.
\end{equation*}%
But using H\"{o}lder's inequality together with the properties of the
function $a$ (see especially assumption A6. in Section 4), we get
\begin{equation*}
B\leq c_{0}\left\Vert 1+\left\vert u_{0}\right\vert +\left\vert \mathbb{D}%
\mathbf{u}\right\vert +\left\vert \mathbb{D}\Phi \right\vert \right\Vert
_{L^{p}(Q_{T}\times \Omega \times \Delta (A))}^{p-2}\left\Vert \mathbb{D}(%
\mathbf{u}-\Phi )\right\Vert _{L^{p}(Q_{T}\times \Omega \times \Delta
(A))^{N}}^{2},
\end{equation*}%
and by the obvious inequality $\left\vert \mathbb{D}\Phi \right\vert \leq
\left\vert \mathbb{D}\mathbf{u}-\mathbb{D}\Phi \right\vert +\left\vert
\mathbb{D}\mathbf{u}\right\vert $,
\begin{equation*}
\begin{array}{l}
B\leq c_{0}\left( \left\Vert 1+\left\vert u_{0}\right\vert +2\left\vert
Du_{0}\right\vert \right\Vert _{L^{p}(Q_{T}\times \Omega \times \Delta
(A))}+\left\Vert \mathbb{D}(\mathbf{u}-\Phi )\right\Vert _{L^{p}(Q_{T}\times
\Omega \times \Delta (A))^{N}}\right) ^{p-2}\times \\
\;\;\;\;\;\;\;\;\;\times \left\Vert \mathbb{D}(\mathbf{u}-\Phi )\right\Vert
_{L^{p}(Q_{T}\times \Omega \times \Delta (A))^{N}}^{2}.%
\end{array}%
\end{equation*}%
Now, set $\alpha =\left\Vert 1+\left\vert u_{0}\right\vert +2\left\vert
Du_{0}\right\vert \right\Vert _{L^{p}(Q_{T}\times \Omega \times \Delta (A))}$
and
\begin{equation*}
\overline{\nu }(r)=\frac{c_{0}}{c_{1}^{\frac{1}{p}}}r^{\frac{2}{p}}\left(
\alpha +r\right) ^{1-\frac{2}{p}}\text{\ for }r\geq 0\text{.\ \ \ \ \ \ \ \
\ \ \ \ \ \ \ }
\end{equation*}%
Then the function $\overline{\nu }$ is independent of $\Phi $ and satisfies
hypotheses stated in Theorem \ref{t7.1} (this is straightforward by
observing that $\left\Vert \mathbb{D}(\mathbf{u}-\Phi )\right\Vert
_{L^{p}(Q_{T}\times \Omega \times \Delta (A))^{N}}=\left\Vert \mathbb{D}_{y}%
\mathbf{u}-\mathbb{D}_{y}\Phi \right\Vert _{L^{p}(Q_{T}\times \Omega ;%
\mathcal{B}_{A}^{p})^{N}}$). Whence (\ref{7.1}) is shown for $\Phi $ in $%
\mathcal{F}_{0}^{\infty }$.

Now, let $\Phi \in F_{0}^{1}$. Let $(\Psi _{j})_{j}$ be a sequence in $%
\mathcal{F}_{0}^{\infty }$ such that $\Psi _{j}\rightarrow \Phi $ in $%
F_{0}^{1}$ as $j\rightarrow \infty $. Set
\begin{equation*}
\Psi _{j}=(\varphi _{0j},\varrho (\varphi _{1j}))\text{\ and }\Phi =(\psi
_{0},\varrho (\psi _{1})),\;\;\;\;\;\;\;
\end{equation*}%
and define $\Psi _{j,\varepsilon }=\varphi _{0j}+\varepsilon \varphi
_{1j}^{\varepsilon }$\ and $\Phi _{\varepsilon }=\psi _{0}+\varepsilon \psi
_{1}^{\varepsilon }$\ as in (\ref{5.4}). We have
\begin{equation*}
\begin{array}{l}
\underset{\varepsilon \rightarrow 0}{\lim \sup }\left\| Du_{\varepsilon
}-D\Phi _{\varepsilon }\right\| _{L^{p}(Q_{T}\times \Omega )^{N}}\leq
\underset{\varepsilon \rightarrow 0}{\lim \sup }\left\| Du_{\varepsilon
}-D\Psi _{j,\varepsilon }\right\| _{L^{p}(Q_{T}\times \Omega )^{N}}+ \\
\;\;\;\;\;\;\;\underset{\varepsilon \rightarrow 0}{\lim \sup }\left\| D\Psi
_{j,\varepsilon }-D\Phi _{\varepsilon }\right\| _{L^{p}(Q_{T}\times \Omega
)^{N}} \\
\;\;\;\;\;\;\;\;\;\;\leq \overline{\nu }\left( \left\| \mathbb{D}_{y}\mathbf{%
u}-\mathbb{D}_{y}\Psi _{j}\right\| _{L^{p}(Q_{T}\times \Omega ;\mathcal{B}%
_{A}^{p})^{N}}\right) +\underset{\varepsilon \rightarrow 0}{\lim \sup }%
\left\| D\Psi _{j,\varepsilon }-D\Phi _{\varepsilon }\right\|
_{L^{p}(Q_{T}\times \Omega )^{N}}.%
\end{array}%
\end{equation*}%
Now, since $\Psi _{j}\rightarrow \Phi $ in $F_{0}^{1}$, we get $\mathbb{D}%
\Psi _{j}\rightarrow \mathbb{D}\Phi $ in $L^{p}(Q_{T}\times \Omega \times
\Delta (A))^{N}$ as $j\rightarrow \infty $. On the other hand, it can be
easily shown that $\underset{j\rightarrow \infty }{\lim }\underset{%
\varepsilon \rightarrow 0}{\lim }\left\| D\Psi _{j,\varepsilon }-D\Phi
_{\varepsilon }\right\| _{L^{p}(Q_{T}\times \Omega )^{N}}=0$. Hence, taking
the limit (as $j\rightarrow \infty $) of both sides of the last inequality
above, we are led to (\ref{7.1}).
\end{proof}

All the ingredients are now available to state the corrector result.

\begin{corollary}
\label{c7.1}Let the hypotheses be as in Theorem \emph{\ref{t7.1}}. Assume
moreover that
\begin{equation*}
u_{1}\in L^{p}(\Omega \times (0,T);W_{0}^{1,p}(Q))\otimes \lbrack \varrho
(A_{\tau }\otimes (A_{y}^{1}/\mathbb{R}))].
\end{equation*}%
Then, as $\varepsilon \rightarrow 0$,
\begin{equation*}
u_{\varepsilon }-u_{0}-\varepsilon u_{1}^{\varepsilon }\rightarrow 0\text{
in }L^{p}(\Omega \times (0,T);H^{1}(Q)).
\end{equation*}
\end{corollary}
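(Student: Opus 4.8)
The plan is to recognize Corollary \ref{c7.1} as the specialization of Theorem \ref{t7.1} to the test pair $\Phi=\mathbf{u}=(u_{0},u_{1})$ itself. The extra hypothesis on $u_{1}$ is exactly what is needed to make this choice legitimate: it says $u_{1}\in L^{p}(\Omega\times(0,T);W_{0}^{1,p}(Q))\otimes[\varrho(A_{\tau}\otimes(A_{y}^{1}/\mathbb{R}))]$, i.e. $u_{1}=\varrho(\psi_{1})$ for some $\psi_{1}$ in the class appearing in Theorem \ref{t7.1}, while $u_{0}\in L^{p}(\Omega\times(0,T);W_{0}^{1,p}(Q))$ always holds. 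Hence $\mathbf{u}=(\psi_{0},\varrho(\psi_{1}))$ with $\psi_{0}=u_{0}$ is an admissible $\Phi$, that is $\mathbf{u}\in F_{0}^{1}$, and I may take $\Phi=\mathbf{u}$ in \eqref{7.1}.

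With this choice $\mathbb{D}_{y}\mathbf{u}-\mathbb{D}_{y}\Phi=0$, so the argument of $\overline{\nu}$ in \eqref{7.1} vanishes; since $\overline{\nu}(0)=0$ this forces $\lim_{\varepsilon\to 0}\|Du_{\varepsilon}-D\Phi_{\varepsilon}\|_{L^{p}(Q_{T}\times\Omega)^{N}}=0$, where $\Phi_{\varepsilon}=u_{0}+\varepsilon u_{1}^{\varepsilon}$ is built from $\mathbf{u}$ via \eqref{5.4}. Since $D\Phi_{\varepsilon}=D(u_{0}+\varepsilon u_{1}^{\varepsilon})$, this is precisely the statement that the gradient of the error $u_{\varepsilon}-u_{0}-\varepsilon u_{1}^{\varepsilon}$ tends to $0$ in $L^{p}(Q_{T}\times\Omega)^{N}$.

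It then remains to promote control of the gradient to control of the full $H^{1}(Q)$-norm, and the key observation is that this can be done without any separate estimate on the function part. Indeed, for a.e. $(t,\omega)$ the error $u_{\varepsilon}-u_{0}-\varepsilon u_{1}^{\varepsilon}$ lies in $W_{0}^{1,p}(Q)$: the terms $u_{\varepsilon}(t,\omega)$ and $u_{0}(t,\omega)$ belong to $W_{0}^{1,p}(Q)$, and $\varepsilon u_{1}^{\varepsilon}(t,\omega)$ is a finite sum of products of $W_{0}^{1,p}(Q)$-functions (the slow factors of $\psi_{1}$) with bounded $C^{1}$ multipliers $x\mapsto g(x/\varepsilon)$, hence has zero trace on $\partial Q$ as well. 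Therefore Poincar\'{e}'s inequality applies and gives $\|u_{\varepsilon}-u_{0}-\varepsilon u_{1}^{\varepsilon}\|_{L^{p}(Q)}\le C\|D(u_{\varepsilon}-u_{0}-\varepsilon u_{1}^{\varepsilon})\|_{L^{p}(Q)}$ for a.e. $(t,\omega)$. Since $p\ge 2$ and $Q$ is bounded, H\"{o}lder's inequality converts $L^{p}(Q)$-norms into $L^{2}(Q)$-norms, so both the $L^{2}(Q)$-norm of the error and that of its gradient are dominated by $C\|D(u_{\varepsilon}-u_{0}-\varepsilon u_{1}^{\varepsilon})\|_{L^{p}(Q)}$, whence $\|u_{\varepsilon}-u_{0}-\varepsilon u_{1}^{\varepsilon}\|_{H^{1}(Q)}\le C\|D(u_{\varepsilon}-u_{0}-\varepsilon u_{1}^{\varepsilon})\|_{L^{p}(Q)}$ a.e. in $(t,\omega)$.

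Raising to the power $p$ and integrating over $\Omega\times(0,T)$ then yields
\[
\overline{\mathbb{E}}\int_{0}^{T}\|u_{\varepsilon}-u_{0}-\varepsilon u_{1}^{\varepsilon}\|_{H^{1}(Q)}^{p}\,dt\le C\,\|Du_{\varepsilon}-D\Phi_{\varepsilon}\|_{L^{p}(Q_{T}\times\Omega)^{N}}^{p},
\]
whose right-hand side tends to $0$ by the second paragraph, which is the asserted convergence. The argument is short because Theorem \ref{t7.1} carries out all the analytic work; the only steps requiring care are (i) checking that the hypothesis on $u_{1}$ places $\mathbf{u}$ in $F_{0}^{1}$, so that $\Phi=\mathbf{u}$ is admissible and the right-hand side of \eqref{7.1} collapses to $\overline{\nu}(0)=0$, and (ii) the Poincar\'{e} reduction, which is precisely what lets me bypass a direct (and moment-hungry) estimate on the function part of the error by expressing the entire $H^{1}(Q)$-norm in terms of the gradient already controlled by \eqref{7.1}.
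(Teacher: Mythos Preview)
Your proof is correct, and for the gradient part it coincides with the paper's argument: both apply Theorem~\ref{t7.1} with $\Phi=\mathbf{u}$, so that the right-hand side of \eqref{7.1} collapses to $\overline{\nu}(0)=0$ and $D(u_{\varepsilon}-u_{0}-\varepsilon u_{1}^{\varepsilon})\to 0$ in $L^{p}(Q_{T}\times\Omega)^{N}$.

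The difference lies in how the function part of the $H^{1}(Q)$-norm is handled. The paper argues it directly: it uses the strong convergence $u_{\varepsilon}\to u_{0}$ in $L^{p}(\Omega\times(0,T);L^{2}(Q))$ (obtained from the tightness machinery and Vitali's theorem behind \eqref{6.1}) together with the elementary fact $\varepsilon u_{1}^{\varepsilon}\to 0$, and then combines this with the gradient convergence via the embedding $L^{p}(Q)\hookrightarrow L^{2}(Q)$. You instead observe that all three terms $u_{\varepsilon}$, $u_{0}$, $\varepsilon u_{1}^{\varepsilon}$ have zero trace on $\partial Q$ (the last because the slow factors of $u_{1}$ lie in $W_{0}^{1,p}(Q)$ and the fast factors are bounded $\mathcal{C}^{1}$ multipliers), so Poincar\'{e}'s inequality reduces the $L^{2}(Q)$-norm of the error to its gradient, already controlled. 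Your route is more self-contained: it never calls on the separate strong convergence $u_{\varepsilon}\to u_{0}$ and thus sidesteps the moment/uniform-integrability considerations the paper invokes. The paper's route, on the other hand, does not rely on the zero-trace structure of $\varepsilon u_{1}^{\varepsilon}$ and would still work if the corrector only belonged to $W^{1,p}(Q)$ in the slow variable.
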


\begin{proof}
It is clear that, on one hand, $\varepsilon u_{1}^{\varepsilon }\rightarrow
0 $\ in $L^{p}(Q_{T}\times \Omega )$ as $\varepsilon \rightarrow 0$; and on
the other hand, due to the tightness property, it can be shown that the
convergence result (\ref{6.1}) still holds with $L^{2}(Q_{T}\times \Omega )$
replaced by $L^{p}(\Omega \times (0,T);L^{2}(Q))$, so that we have $%
u_{\varepsilon }-u_{0}\rightarrow 0$ in $L^{p}(0,T;L^{2}(Q))$ a.s., and
hence $u_{\varepsilon }-u_{0}\rightarrow 0$ in $L^{p}(\Omega \times
(0,T);L^{2}(Q))$. Thus $u_{\varepsilon }-u_{0}-\varepsilon
u_{1}^{\varepsilon }\rightarrow 0$\ in $L^{p}(\Omega \times (0,T);L^{2}(Q))$
as $\varepsilon \rightarrow 0$. It remains to show that $D(u_{\varepsilon
}-u_{0}-\varepsilon u_{1}^{\varepsilon })\rightarrow 0$ in $L^{p}(\Omega
\times (0,T);L^{2}(Q))^{N}$ as $\varepsilon \rightarrow 0$. But, if we set $%
\Phi _{\varepsilon }=u_{0}+\varepsilon u_{1}^{\varepsilon }$, then applying (%
\ref{7.1}), we get
\begin{equation*}
\underset{\varepsilon \rightarrow 0}{\lim \sup }\left\Vert Du_{\varepsilon
}-D\Phi _{\varepsilon }\right\Vert _{L^{p}(Q_{T}\times \Omega
)^{N}}=0\;\;\;\;\;\;\;\;\;\;\;
\end{equation*}%
since $\overline{\nu }\left( \left\Vert \mathbb{D}_{y}\mathbf{u}-\mathbb{D}%
_{y}\Phi \right\Vert _{L^{p}(Q_{T}\times \Omega ;\mathcal{B}%
_{A}^{p})^{N}}\right) =\overline{\nu }(0)=0$, and so $\underset{\varepsilon
\rightarrow 0}{\lim }\left\Vert Du_{\varepsilon }-D\Phi _{\varepsilon
}\right\Vert _{L^{p}(Q_{T}\times \Omega )^{N}}=0$. Thus $D(u_{\varepsilon
}-u_{0}-\varepsilon u_{1}^{\varepsilon })\rightarrow 0$ in $%
L^{p}(Q_{T}\times \Omega )^{N}$, and the result follows from the continuous
embedding $L^{p}(\Omega \times (0,T);L^{2}(Q))\rightarrow L^{p}(Q_{T}\times
\Omega )$. We are therefore done.
\end{proof}

\begin{remark}
\label{r7.1}\emph{If we assume that }$u_{\varepsilon }\rightarrow u_{0}$%
\emph{\ in }$L^{p}(Q_{T}\times \Omega )$\emph{, then the corrector result is
finer and expresses as follows: }%
\begin{equation*}
u_{\varepsilon }-u_{0}-\varepsilon u_{1}^{\varepsilon }\rightarrow 0\text{%
\emph{\ in }}L^{p}(\Omega \times (0,T);W^{1,p}(Q))\text{\emph{\ as }}%
\varepsilon \rightarrow 0\text{.}
\end{equation*}%
\emph{This results holds especially in the deterministic setting since we
have in that case the strong convergence result }$u_{\varepsilon
}\rightarrow u_{0}$\emph{\ in }$L^{p}(Q_{T})$\emph{. In the stochastic
framework, the above results fails in general, and we can not have a better
result than the one in Corollary \ref{c7.1}.}
\end{remark}

\section{Some concrete applications of the abstract homogenization result}

In this section we give some applications of the results of Section 6 to
concrete situations that occurred in some physical setting.

\begin{example}
\label{e8.1}\emph{The homogenization of (\ref{1.1}) can be achieved under
the periodicity assumption: }

\begin{itemize}
\item[(\protect\ref{5.1})$_{1}$] \emph{The functions }$a_{i}(x,t,\cdot
,\cdot ,\mu ,\lambda )$\emph{, }$a_{0}(x,t,\cdot ,\cdot ,\mu )$\emph{\ and }$%
M_{k}(\cdot ,\cdot ,\mu )$\emph{\ are both periodic of period }$1$\emph{\ in
each scalar coordinate, for any fixed }$(x,t)\in \overline{Q}_{T}$\emph{, }$%
(\mu ,\lambda )\in \mathbb{R}\times \mathbb{R}^{N}$\emph{, and }$1\leq i\leq
N$\emph{\ and }$k\geq 1$\emph{.}

\noindent \emph{This leads to (\ref{5.1}) with }$A=\mathcal{C}_{\text{\emph{%
per}}}(Y\times Z)=\mathcal{C}_{\text{\emph{per}}}(Y)\odot \mathcal{C}_{\text{%
\emph{per}}}(Z)$\emph{\ (the product algebra, with }$Y=(0,1)^{N}$\emph{\ and
}$Z=(0,1)$\emph{), and hence }$B_{A}^{r}=L_{\text{\emph{per}}}^{r}(Y\times
Z) $\emph{\ for }$1\leq r\leq \infty $\emph{.}
\end{itemize}
\end{example}

\begin{example}
\label{e8.2}\emph{The above functions in (\ref{5.1})}$_{1}$\emph{\ are both
Besicovitch almost periodic in }$(y,\tau )$\emph{. This amounts to (\ref{5.1}%
) with }$A=AP(\mathbb{R}_{y,\tau }^{N+1})=AP(\mathbb{R}_{y}^{N})\odot AP(%
\mathbb{R}_{\tau })$\emph{\ (}$AP(\mathbb{R}_{y}^{N})$\emph{\ the Bohr
almost periodic functions on }$\mathbb{R}_{y}^{N}$\emph{).}
\end{example}

\begin{example}
\label{e8.3}\emph{The homogenization problem for (\ref{1.1}) can be
considered under the assumption }

\begin{itemize}
\item[(\protect\ref{5.1})$_{2}$] $a_{i}(x,t,\cdot ,\cdot ,\mu ,\lambda )$%
\emph{\ is weakly almost periodic while the functions }$a_{0}(x,t,\cdot
,\cdot ,\mu )$\emph{\ and }$M_{k}(\cdot ,\cdot ,\mu )$\emph{\ are almost
periodic in the Besicovitch sense. This yields (\ref{5.1}) with }$A=WAP(%
\mathbb{R}_{y}^{N})\odot WAP(\mathbb{R}_{\tau })$\emph{\ (}$WAP(\mathbb{R}%
_{y}^{N})$\emph{, the algebra of continuous weakly almost periodic functions
on }$\mathbb{R}_{y}^{N}$\emph{; see e.g., \cite{17}).}
\end{itemize}
\end{example}

\begin{example}[Homogenization in the Fourier-Stieltjes algebra]
\label{e8.4}\emph{Let us first and foremost define the Fourier-Stieltjes
algebra on }$\mathbb{R}^{N}$. \emph{The Fourier-Stieltjes algebra on }$%
\mathbb{R}^{N}$\emph{\ is defined as the closure in }$BUC(\mathbb{R}^{N})$%
\emph{\ (the bounded uniformly continuous functions on }$\mathbb{R}^{N}$%
\emph{) of the space }
\begin{equation*}
FS_{\ast }(\mathbb{R}^{N})=\left\{ f:\mathbb{R}^{N}\rightarrow \mathbb{R}%
,\;f(x)=\int_{\mathbb{R}^{N}}\exp (ix\cdot y)d\nu (y)\text{\ \emph{for some}
}\nu \in \mathcal{M}_{\ast }(\mathbb{R}^{N})\right\}
\end{equation*}%
\emph{where }$\mathcal{M}_{\ast }(\mathbb{R}^{N})$\emph{\ denotes the space
of complex valued measures }$\nu $\emph{\ with finite total variation: }$%
\left\vert \nu \right\vert (\mathbb{R}^{N})<\infty $\emph{. We denote it by }%
$FS(\mathbb{R}^{N})$\emph{.}

\emph{Since by \cite{17} any function in }$FS_{\ast }(\mathbb{R}^{N})$\emph{%
\ is a weakly almost periodic continuous function, we have that }$FS(\mathbb{%
R}^{N})\subset WAP(\mathbb{R}^{N})$\emph{. It is a well known fact that }$FS(%
\mathbb{R}^{N})$\emph{\ is an ergodic algebra which is translation invariant
(this follows from the fact that }$FS_{\ast }(\mathbb{R}^{N})$\emph{\ is
translation invariant), so that all the hypotheses of Theorem \ref{t3.2} are
satisfied with any algebra }$A=FS(\mathbb{R}^{N})\odot A_{\tau }$\emph{, }$%
A_{\tau }$\emph{\ being any algebra wmv on }$\mathbb{R}_{\tau }$\emph{.}

\emph{This being so, we aim at solve homogenization problem for (\ref{1.1})
under the assumption}

\begin{itemize}
\item[(\protect\ref{5.1})$_{3}$] $a_{i}(x,t,\cdot ,\cdot ,\mu ,\lambda )\in
B_{A_{\tau }}^{p^{\prime }}(\mathbb{R}_{\tau };B_{FS}^{p^{\prime }}(\mathbb{R%
}_{y}^{N}))$\emph{, }$a_{0}(x,t,\cdot ,\cdot ,\mu )$\emph{, }$M_{k}(\cdot
,\cdot ,\mu )\in B_{A_{\tau }}^{2}(\mathbb{R}_{\tau };B_{FS}^{2}(\mathbb{R}%
_{y}^{N}))$\emph{\ for any }$(\mu ,\lambda )\in \mathbb{R\times R}^{N}$\emph{%
, and for all }$(x,t)\in \overline{Q}_{T}$\emph{, }$(1\leq i\leq N)$

\noindent \emph{where }$B_{FS}^{p^{\prime }}(\mathbb{R}_{y}^{N})$\emph{\
denotes the closure of the algebra }$FS(\mathbb{R}_{y}^{N})$\emph{\ with
respect to the seminorm }$\left\Vert \cdot \right\Vert _{p^{\prime }}$\emph{%
, and }$A_{\tau }$\emph{\ is any arbitrary algebra wmv on }$\mathbb{R}_{\tau
}$\emph{. We are then led to (\ref{5.1}) with }$A=FS(\mathbb{R}^{N})\odot
A_{\tau }$\emph{.}
\end{itemize}
\end{example}

\begin{remark}
\label{r8.1}\emph{It should be stressed that the problems solved in Examples %
\ref{e8.3} and \ref{e8.4} are new in the literature as far as the
homogenization of SPDEs is concerned.}
\end{remark}

\begin{acknowledgement}
\emph{The authors would like to thank the anonymous referee for valuable
remarks and suggestions.}
\end{acknowledgement}

\end{document}